\date{\today}
\def\1{{\bf 1}}
\def\div{{\rm div}}
\def\la{\langle}
\def\ra{\rangle}
\def\w{\wedge}
\def\dbar{\bar\partial}
\def\C{{\mathbb C}}
\def\w{{\wedge}}
\def\Cu{{\mathcal C}}
\def\codim{{\rm codim\,}}
\def\rank{{\rm rank\, }}
\def\E{{\mathcal E}}
\def\Q{{\mathbb Q}}
\def\U{{\mathcal U}}
\def\J{{\mathcal J}}
\def\nbh{neighborhood }
\def\be{\begin{equation}}
\def\ee{\end{equation}}
\def\Ok{\mathcal O}
\def\Pr{{\mathbb P}}
\def\PM{{\mathcal {PM}}}
\def\pmm{{pseudomeromorphic }}
\def\eo{{\epsilon\to 0}}
\def\e{{\epsilon}}
\newtheorem{thm}{Theorem}[section]
\newtheorem{lma}[thm]{Lemma}
\newtheorem{cor}[thm]{Corollary}
\newtheorem{prop}[thm]{Proposition}
\theoremstyle{definition}
\newtheorem{df}[thm]{Definition}
\theoremstyle{remark}
\newtheorem{preremark}[thm]{Remark}
\newtheorem{preex}[thm]{Example}
\newenvironment{remark}{\begin{preremark}}{\qed\end{preremark}}
\newenvironment{ex}{\begin{preex}}{\qed\end{preex}}
\numberwithin{equation}{section}
\title[On proper intersections on a singular analytic space]{On proper intersections on a
singular analytic space}
\begin{document}

\date{\today}

\author[Mats Andersson \& H\aa kan Samuelsson Kalm]{Mats Andersson \& H\aa kan Samuelsson Kalm} 

\address{Department of Mathematical Sciences\\Chalmers University of Technology and University of
Gothenburg\\SE-412 96 G\"OTEBORG\\SWEDEN}

\email{matsa@chalmers.se, hasam@chalmers.se}

\subjclass{}

\thanks{The first author  was
partially supported by the Swedish
Research Council}

\begin{abstract}
Given a reduced analytic space $Y$ we introduce a class of {\it nice} cycles, including
all effective $\Q$-Cartier divisors. Equidimensional nice cycles that intersect properly allow for a natural intersection product.
Using $\dbar$-potentials and residue calculus we provide an intrinsic way of defining this product. 
The intrinsic definition makes it possible to prove global formulas. In case $Y$ is smooth all
cycles are differences of nice cycles, and so we get a new way to define classical proper intersections.
\end{abstract}

%%%%%%%%%%

\maketitle

\section{Introduction}

Let $Y$ be a complex manifold of dimension $n$.
In this paper a cycle $\mu$ in $Y$ is a locally finite sum $\sum_j a_jZ_j$, where $a_j\in\mathbb{Q}\setminus\{0\}$ and $Z_j$ are 
distinct irreducible
analytic subsets of $Y$. The cycle is effective if $a_j>0$.
The support, $|\mu|$, of $\mu$ is the union of the $Z_j$ and the (co)dimension of $\mu$ is the 
(co)dimension of $\cup_jZ_j$.
Assume that $\mu_1,\ldots,\mu_r$ are cycles in $Y$ of
pure codimensions. 
It is well-known that then
\begin{equation}\label{codimolikhet}
\text{codim}\big(|\mu_1|\cap\cdots\cap|\mu_r|\big) \leq \text{codim}\, |\mu_1|+\cdots +\text{codim}\, |\mu_r|.
\end{equation}
If equality holds in \eqref{codimolikhet}, then $\mu_1,\ldots,\mu_r$ are said to intersect properly. In that case there
is a well-defined cycle 
\begin{equation*}
\mu_r\cdot  \cdots  \cdot\mu_1=\sum_j m_j V_j,
\end{equation*}
the proper intersection product,
where $V_j$ are the irreducible components of the set-theoretical intersection
$|\mu_1|\cap\cdots\cap|\mu_r|$  and $m_j\in\mathbb{Q}$. In general some $m_j$ may be $0$. However,
if $\mu_j$ are effective, then $\mu_r\cdots\mu_1$ is effective and all $m_j>0$.

 %%%
 
Classically, this intersection product was  defined algebraically, see, e.g., \cite{Fult}.  
On the analytic side, given a cycle $\mu=\sum_ja_jZ_j$, recall that there is an associated closed current $[\mu]=\sum_ja_j[Z_j]$, 
the Lelong current of $\mu$, where $[Z_j]$ is integration over the regular part of $Z_j$.  
It is quite remarkable that one can define the Lelong current of the
proper intersection as
\begin{equation}\label{basal}
[\mu_r\cdot  \cdots \, \cdot\mu_1]=[\mu_r]\w \cdots\w [\mu_1],
\end{equation}
where 
the current product on the right-hand side is given a meaning via suitable regularizations of the various factors, 
see, e.g., \cite{Ch}.

\smallskip
Let us now assume that $Y$ is a reduced analytic space of pure dimension $n$. Then
there is no known general analogous intersection theory; not even
\eqref{codimolikhet} holds in general.  
We say that an effective cycle $\mu$ in $Y$ is {\it nice} if locally there is an embedding $i\colon Y\to Y'$,
where $Y'$ is smooth, and an effective cycle $\mu'$ in $Y'$ such that $i_*\mu$, i.e., $\mu$ considered as a cycle in $Y'$, is the 
proper intersection $\mu'\cdot i_*Y$ in $Y'$.  
Since such embeddings are essentially unique, being nice is an intrinsic property
in $Y$.  In general, the irreducible components of a nice cycle are not nice. 
If $\mu_1,\ldots,\mu_r$ are nice, then \eqref{codimolikhet} holds, and as in the smooth case we say that 
they intersect properly if equality holds. In that case we have 
an intrinsic nice cycle $\mu=\mu_r\cdot  \cdots \, \cdot\mu_1$. As in the smooth case, $\mu_r\cdot  \cdots \, \cdot\mu_1$ is commutative.
If $i\colon Y\to Y'$ is a local embedding and $\mu_j'$ 
are effective cycles in $Y'$  such that $i_*\mu_j=\mu_j'\cdot i_*Y$, then $\mu_1',\ldots,\mu'_r,i_*Y$ intersect properly and
$i_*\mu=\mu_r'\cdot  \cdots \, \cdot\mu_1'\cdot i_*Y$. 
 See Section~\ref{nicecycles} for details.

It is well-known that the proper intersection in a complex manifold of cycles with integer coefficients has integer coefficients.
However, in a reduced complex space we need to 
consider cycles with rational coefficients. In fact, even if $\mu$ has integer coefficients,
$\mu'$ may need to have rational coefficients. 
Moreover, the intersection product of nice cycles with integer coefficients in general has rational coefficients.

The intersection theory for nice cycles in $Y$ is in a way a quite simple consequence of the
proper intersection theory in ambient space.  Our first main result is an intrinsic way to define
proper intersections in $Y$, i.e., with no explicit reference to any ambient space.  
The approach in \cite{Ch} to use regularizations of $[\mu_j]$
seems to be difficult to extend when $Y$ is singular. 
Instead we introduce \emph{good} $\dbar$-potentials. If $\mu$ is a cycle in $Y$ we say that 
a current $u$ in $Y$ is a good $\dbar$-potential of $\mu$ if $\dbar u=[\mu]$, $u$ is smooth outside $|\mu|$, and $u$
is pseudomeromorphic in $Y$. This last requirement is a regularity property that will be explained below.

\begin{thm}\label{nymain1}
Let $Y$ be a reduced analytic space of pure dimension.
\begin{itemize}
\item[(i)] Each nice cycle in $Y$ locally has a good $\dbar$-potential.
\item[(ii)] If $\mu_1$ is a nice cycle and $u_2$ is a good 
$\dbar$-potential of a nice cycle $\mu_2$, then $u_2\wedge [\mu_1]$, a priori defined outside $|\mu_2|$, has 
a unique pseudomeromorphic extension to $Y$.
\item[(iii)] If $\mu_1$ and $\mu_2$ intersect properly, then the product
$ 
[\mu_2]\wedge[\mu_1]:=\dbar(u_2\wedge [\mu_1])
$
 is the Lelong current of the nice cycle $\mu_2\cdot\mu_1$.
\end{itemize}
\end{thm}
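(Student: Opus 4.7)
The plan is to reduce the three statements to the classical proper-intersection calculus in smooth ambient space via the local embedding $i\colon Y\hookrightarrow Y'$ built into the definition of niceness, using pseudomeromorphic (PM) calculus to transfer between $Y$ and $Y'$. For (i), locally write $i_*\mu=\mu'\cdot i_*Y$ and pick a good $\dbar$-potential $u'$ of $\mu'$ on $Y'$ by classical residue methods (for instance $\partial\log|f|^2/(2\pi i)$ in the divisor case, or Koszul residues from a free resolution of $\O_{|\mu'|}$ in general). Since $\mu$ is nice, $\mu'$ and $i_*Y$ intersect properly in $Y'$, so $u'\wedge i_*[Y]$ admits a canonical PM extension across $|\mu'|\cap Y=|\mu|$; being supported on $i(Y)$ it equals $i_*u$ for a unique PM current $u$ on $Y$, and applying $\dbar$ together with $\dbar(u'\wedge i_*[Y])=[\mu'\cdot i_*Y]=i_*[\mu]$ and injectivity of $i_*$ yields $\dbar u=[\mu]$, while smoothness of $u$ off $|\mu|$ is inherited from $u'$.

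For (ii), both $u_2$ and $[\mu_1]$ are PM, and $u_2$ is smooth off $|\mu_2|$, so $u_2\wedge[\mu_1]$ is PM on $Y\setminus|\mu_2|$. A PM extension to $Y$ exists as the weak limit of $\chi(|f|^2/\epsilon)\,u_2\wedge[\mu_1]$ as $\epsilon\to 0$ for a tuple $f$ cutting out $|\mu_2|$ and a smooth cutoff $\chi$, and it is uniquely characterized among PM extensions by having vanishing standard extension across $|\mu_2|$ — standard consequences of the PM structure theorem and dimension principle.

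For (iii), take $u_2$ as in (i), constructed from a good potential $u_2'$ of $\mu_2'$ in $Y'$. On $Y\setminus|\mu_2|$ one has $u_2=i^*u_2'$, so by the projection formula and niceness of $\mu_1$,
\begin{equation*}
i_*\bigl(u_2\wedge[\mu_1]\bigr)=u_2'\wedge i_*[\mu_1]=u_2'\wedge[\mu_1']\wedge i_*[Y]
\end{equation*}
on $Y'\setminus|\mu_2|$; both sides are PM on $Y'$ with vanishing standard extension across $|\mu_2|$, so the identity extends to all of $Y'$ by uniqueness of PM extensions. Differentiating and invoking the classical triple proper intersection in the smooth $Y'$ (whose hypotheses are equivalent, by niceness, to proper intersection of $\mu_1,\mu_2$ in $Y$),
\begin{equation*}
\dbar\bigl(u_2'\wedge[\mu_1']\wedge i_*[Y]\bigr)=[\mu_2'\cdot\mu_1'\cdot i_*Y]=i_*[\mu_2\cdot\mu_1],
\end{equation*}
and injectivity of $i_*$ yields $\dbar(u_2\wedge[\mu_1])=[\mu_2\cdot\mu_1]$ for this $u_2$. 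Independence from the choice of good potential follows because $\dbar(u_2\wedge[\mu_1])$ is PM, closed, supported on $|\mu_1|\cap|\mu_2|$ of codimension $c_1+c_2$ and of bidegree $(c_1+c_2,c_1+c_2)$, hence a Lelong current whose multiplicities at generic points of the intersection components are given by intrinsic local residue formulas that do not depend on the particular potential.

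The main obstacle is justifying the displayed identity on all of $Y'$: agreement off $|\mu_2|$ is an elementary projection-formula computation, but coincidence of the two PM extensions across $|\mu_2|$ — equivalently, commutation of the standard PM extension with pushforward under an embedding — is the delicate point and is ultimately handled by a joint regularization argument with cutoffs $\chi(|f|^2/\epsilon)$, which constitutes the technical core of the proof.
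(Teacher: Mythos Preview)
Your overall architecture matches the paper's: reduce to smooth ambient space via the embedding built into niceness, transfer via $i_*$, and identify the result with the classical proper intersection in $Y'$. Parts (i) and (iii) follow essentially the route the paper takes. Two points, however, are genuine gaps.

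First, in (i) you assert that a PM current on $Y'$ supported on $i(Y)$ is automatically $i_*u$ for a PM current $u$ on $Y$. The existence of some current $u$ with $i_*u=u'\wedge i_*[Y]$ is elementary, but its being \emph{pseudomeromorphic on $Y$} is not: one must first verify $\1_{i_*Y_{\mathrm{sing}}}(u'\wedge i_*[Y])=0$ and then invoke a separate characterization of which PM currents on $Y'$ supported on $i(Y)$ arise as $i_*$ of PM currents on $Y$. Also, for a general effective cycle $\mu'$ (not a complete intersection) the Koszul residue construction does not produce a good potential; the paper instead builds $u'$ by convolving $[\mu']$ with the Bochner--Martinelli kernel and then solving a smooth $\dbar$-equation for the remainder.

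Second, and more seriously, your argument for (ii) is circular. You claim the weak limit $\lim_{\epsilon\to 0}\chi(|f|^2/\epsilon)\,u_2\wedge[\mu_1]$ exists and is PM as a ``standard consequence of the PM structure theorem and dimension principle'', but this is precisely the content to be established: $u_2$ is only assumed PM and smooth off $|\mu_2|$, not almost semi-meromorphic, so the extension lemma for ASM-times-PM products does not apply, and the operator $\1_{Y\setminus|\mu_2|}$ acts only on currents already known to be PM on all of $Y$. The paper resolves this by a different construction: it first defines $u_2\wedge u_1$ (for a good potential $u_1$ of $\mu_1$, available by (i)) via the diagonal in $Y\times Y$, namely as $\1_{|\mu_1|^c\cap|\mu_2|^c}\,p_*\big(M_n^\eta\wedge(u_2\otimes u_1)\big)$, and then \emph{defines} $u_2\wedge[\mu_1]:=-\1_{|\mu_2|^c}\dbar(u_2\wedge u_1)$. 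Only once this PM extension is in hand does one know that the cutoff regularization converges to it. Your closing paragraph correctly identifies the extension step as the crux, but the joint regularization you propose does not by itself supply the missing existence.
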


In particular, the product in (iii) is commutative and independent of the choice of $u_2$. 
This follows also quite easily from the intrinsic definition and residue calculus, see Proposition~\ref{snoddas1} below.

If $\mu_1,\ldots,\mu_r$ are nice and intersect properly, then in a neighborhood of any $x\in|\mu_1|\cap\cdots\cap |\mu_r|$
this result can be iterated to give the Lelong current of $\mu_r\cdot \cdots \cdot \mu_1$ there.
Since all effective cycles in a manifold are nice this gives in particular a new definition of proper intersection when $Y$ is smooth.

\begin{remark}\label{tejprulle}
It might look more natural to use $dd^c$-potentials rather than $\dbar$-potentials. However,
the latter choice gives access to residue theory, without which we cannot show existence of local potentials, let alone 
define the product $u_2\wedge[\mu_1]$.
\end{remark}
 
Our intrinsic definition of the proper intersection of nice cycles makes it possible to prove global results for a compact 
singular $Y$. 
 
\begin{thm}\label{main2}
Let $Y$ be compact, $\omega$ a K\"ahler form, and let
$\mu_1,\mu_2,\ldots,\mu_r$ be nice cycles in $Y$ of codimensions $\kappa_1,\ldots,\kappa_r$, respectively.
Assume that $\mu_1,\ldots,\mu_k$ intersect properly for $k=2,\ldots,r$.  
Assume also that for each $j=1,\ldots, r$ there is a smooth (closed) form $\alpha_j$, and a \pmm  current $a_j$ 
of bidegree $(\kappa_j,\kappa_j-1)$,
smooth in $Y\setminus |\mu_j|$, 
such that
\begin{equation}\label{rutger}
\dbar a_j= [\mu_j]-\alpha_j, \  j=1,\ldots,r.
\end{equation}
In addition, suppose that all of the $\alpha_j$, except possibly $\alpha_1$, locally have smooth $\dbar$-potentials.
If $\kappa=\kappa_1+\cdots +\kappa_r$, then  
\begin{equation}\label{pucko}
\int_Y [\mu_r]\wedge\cdots \wedge[\mu_1]\w \omega^{n-\kappa}=
 \int_Y \alpha_r\w\cdots\wedge\alpha_1\w \omega^{n-\kappa}.
 \end{equation}
 \end{thm}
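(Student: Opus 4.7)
The plan is to prove \eqref{pucko} by successively swapping $[\mu_j]$ with $\alpha_j$ in the product, working from the outermost factor $j=r$ inwards; each swap produces a $\dbar$-exact pseudomeromorphic current that integrates to zero against the closed form $\omega^{n-\kappa}$ on the compact space $Y$ by Stokes' theorem for pseudomeromorphic currents. Write $\tau_j:=[\mu_j]\wedge\cdots\wedge[\mu_1]$; by iterating Theorem~\ref{nymain1} and the proper-intersection hypothesis, this is the Lelong current of the nice cycle $\mu_j\cdots\mu_1$, hence $\dbar$-closed and pseudomeromorphic.

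For $j\geq 2$, the local-smooth-$\dbar$-potential hypothesis on $\alpha_j$ provides locally a smooth $\beta_j$ with $\dbar\beta_j=\alpha_j$; then $\tilde a_j:=a_j+\beta_j$ is a good $\dbar$-potential of $\mu_j$. By Theorem~\ref{nymain1}(ii), $\tilde a_j\wedge\tau_{j-1}$ has a unique pseudomeromorphic extension across $|\mu_j|$, and subtracting the smooth $\beta_j\wedge\tau_{j-1}$ provides a pseudomeromorphic extension $A_j$ of $a_j\wedge\tau_{j-1}$; the dependence on the local choice of $\beta_j$ cancels between $\tilde a_j$ and $\beta_j$, so the $A_j$ glue into a global pseudomeromorphic current. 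Combining Theorem~\ref{nymain1}(iii), which gives $\dbar(\tilde a_j\wedge\tau_{j-1})=\tau_j$, with the ordinary Leibniz rule applied to the smooth factor $\beta_j$ (using $\dbar\tau_{j-1}=0$), one obtains
\[
\dbar A_j=\tau_j-\alpha_j\wedge\tau_{j-1}.
\]
Wedging with the smooth closed form $\alpha_r\wedge\cdots\wedge\alpha_{j+1}\wedge\omega^{n-\kappa}$ and integrating yields the one-step identity
\[
\int_Y \alpha_r\wedge\cdots\wedge\alpha_{j+1}\wedge\tau_j\wedge\omega^{n-\kappa}
=\int_Y \alpha_r\wedge\cdots\wedge\alpha_j\wedge\tau_{j-1}\wedge\omega^{n-\kappa}.
\]

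Iterating from $j=r$ down to $j=2$ reduces the left side of \eqref{pucko} to $\int_Y\alpha_r\wedge\cdots\wedge\alpha_2\wedge[\mu_1]\wedge\omega^{n-\kappa}$. For the final substitution, $[\mu_1]=\alpha_1+\dbar a_1$ pairs the pseudomeromorphic $a_1$ against the smooth closed form $\alpha_r\wedge\cdots\wedge\alpha_2\wedge\omega^{n-\kappa}$, so no product of $a_1$ with a singular current appears; a direct application of Stokes for pseudomeromorphic currents closes the argument. This is precisely why $\alpha_1$ is exempt from the smooth-$\dbar$-potential hypothesis. The main obstacle is the construction of the global pseudomeromorphic $A_j$ together with the Leibniz identity $\dbar A_j=\dbar a_j\wedge\tau_{j-1}$; this rests squarely on Theorem~\ref{nymain1}(ii) and the auxiliary role of $\beta_j$ in expressing $a_j$ locally as a good $\dbar$-potential minus a smooth form. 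After that the proof is a telescoping use of Stokes.
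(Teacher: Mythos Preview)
Your argument is correct and rests on the same key point as the paper's: the local smooth $\dbar$-potential $\beta_j$ of $\alpha_j$ turns $a_j$ into a good potential $\tilde a_j=a_j+\beta_j$, so that $a_j\wedge\tau_{j-1}:=\tilde a_j\wedge\tau_{j-1}-\beta_j\wedge\tau_{j-1}$ is a well-defined global pseudomeromorphic current with $\dbar(a_j\wedge\tau_{j-1})=\tau_j-\alpha_j\wedge\tau_{j-1}$. The only difference is organizational. The paper packages everything into a single current $A_r$ built recursively by $A_1=a_1$ and $A_{k+1}=a_{k+1}\wedge\nu_k+A_k\wedge\alpha_{k+1}$, so that $\dbar A_r=\nu_r-\alpha_r\wedge\cdots\wedge\alpha_1$, and then applies Stokes once; you instead apply Stokes at each step and telescope. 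Unrolling the paper's recursion gives $A_r=\sum_{k=1}^r a_k\wedge\nu_{k-1}\wedge\alpha_{k+1}\wedge\cdots\wedge\alpha_r$, which is exactly the sum of your per-step potentials against the appropriate smooth factors, so the two arguments are the same computation arranged differently.
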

 
Formula \eqref{pucko} suggests that the intersection product is ``cohomologically sound''. 
 
\smallskip

We say that a cycle $\mu$ in $Y$ is an RE-cycle if it is a
locally finite linear combination with positive rational coefficients
of fundamental cycles of so-called
regular embeddings, that is, locally complete intersection ideals in $\Ok_Y$.  
For instance, any effective $\Q$-Cartier divisor is
an RE-cycle.  We will see that all RE-cycles are nice.  
It turns out that  the proper intersection product of RE-cycles is again an
RE-cycle, see Proposition~\ref{kung}.

\begin{ex}\label{bongo} 
Assume that each $\mu_j$ in Theorem~\ref{main2} 
is the fundamental cycle of an ideal defined by a global section of
a Hermitian vector bundle $E_j$ of rank $\kappa_j=\codim \mu_j$.
We will prove in Section~\ref{global} that then there is
a global \pmm current $a_j$, smooth in $Y\setminus |\mu_j|$ such that
$\dbar a_j=da_j=[\mu_j]-c_{\kappa_j}(E_j)$. The  Chern form $c_{\kappa_j}(E_j)$ locally has
a smooth potential; given such a $\mu_j$ we can thus choose $\alpha_j=c_{\kappa_j}(E_j)$ in \eqref{rutger}. 
\end{ex}

There is a proper intersection theory on normal surfaces due to Mumford, \cite{Mumford}.
Recently Barlet and Magn\'usson, \cite{BMmathZ}, defined proper intersections on
a so-called nearly smooth $Y$ by analytic methods. 
In Section~\ref{nearly} we show that for RE-cycles our intersection product coincides with the intersection in \cite{BMmathZ}.

\smallskip
Our approach relies on residue theory, and in 
Section~\ref{prel} we have collected some 
material that we need.
In Section~\ref{frame} we present our $\dbar$-potential approach to proper intersection.
In Section~\ref{glatt} the we use this approach in case $Y$ is smooth and show that it gives the 
usual intersection product.
Proper intersection of nice cycles is discussed in Section~\ref{nicecycles} and Theorem~\ref{nymain1} is proved. 
The special case of RE-cycles is considered in
Section~\ref{REsektion}.
We prove the global Theorem~\ref{main2} in Section~\ref{global}
and provide various examples in Section~\ref{exsection}.
In the last section we show that our product, at least for RE-cycles, coincides with the  
product in \cite{BMmathZ} when $Y$ is nearly  smooth.

\section{Some notions and results in residue theory}\label{prel}
Throughout this section $Y$ is a (reduced)  analytic space of pure dimension $n$.
A smooth form $\alpha$ on $Y_{reg}$ is smooth on $Y$, $\alpha\in\E(Y)$, if 
for a local embedding $i\colon Y\to Y'$ into a manifold $Y'$ there is a smooth form $\tilde\alpha$
in $Y'$ such that $\alpha=i^*\tilde\alpha$ on $Y_{reg}$. It follows that $\dbar$, $d$, and $\partial$
are well-defined on $\E(Y)$. If $X$ is a reduced analytic space and $g\colon X\to Y$ is a holomorphic mapping, then there is 
a functorial pullback mapping $g^*\colon \E(Y)\to \E(X)$, see \cite[Corollary~3.2.21]{BMbook}.% or \cite{mindrenot}.

Currents on $Y$ are dual to the compactly supported smooth forms on $Y$. More concretely,
if $i\colon Y\to Y'$ is an embedding, then the currents on $Y$ can be identified with the currents on $Y'$ that vanish on
test forms $\xi$ such that $i^*\xi=0$. Equivalently, the currents on $Y$ can be identified with the currents $\tau$ on $Y'$
such that $\xi\w\tau=0$ for all test forms $\xi$ with $i^*\xi=0$.

If $g\colon X\to Y$ is a proper holomorphic mapping, then there is a pushforward mapping $g_*\colon\Cu(X)\to \Cu(Y)$ from
currents on $X$ to currents on $Y$ defined as $\langle g_*\tau, \xi\rangle=\langle\tau,g^*\xi\rangle$. If $\tau$ is a current on $X$
and $\alpha$ is smooth on $Y$,  then
\begin{equation}\label{bara0}
\alpha \w g_*\tau=g_*(g^*\alpha\w \tau).
\end{equation}
If $Z\subset Y$ is an analytic subset of pure codimension $\kappa$
and $j\colon Z\to Y$ is the inclusion,
then the Lelong current $[Z]$ has bidegree $(\kappa,\kappa)$ and $j_*1=[Z]$.

\smallskip

If $g\colon X\to Y$ is proper,  then there is a mapping 
$g_*\colon \mathcal{Z}(X)\to \mathcal{Z}(Y)$, where $\mathcal{Z}(Y)$ are the cycles in $Y$, 
defined as follows.
Let $\mu\in\mathcal{Z}(X)$ be an irreducible analytic subset of $X$ of dimension $k$. If $\text{dim}\, g(\mu)< k$, then $g_*\mu=0$, and if 
$\text{dim}\, g(\mu)= k$, then $g_*\mu=\text{deg}(g|_\mu) g(\mu)$, where $\text{deg}(g|_\mu)$ is the number of points
in $g^{-1}(x)$ for generic $x\in g(\mu)$. By linearity, $g_*$ extends to $\mathcal{Z}(X)$. We have, cf.\ \cite[Section~2]{aeswy1},
\begin{equation}\label{brittsommar}
g_*[\mu]=[g_*\mu].
\end{equation}

In what follows we will usually identify a cycle with its Lelong current. 
In view of \eqref{brittsommar}, this is consistent with pushforward.
With this convention, $Y$ (considered as a cycle in $Y$) is identified with the constant function $1$,
and if $j\colon Z\to Y$ is an embedding of a reduced analytic space, then $j_*1=[j(Z)]=j(Z)=j_*Z$. 
If $Z$ is an analytic subset of $Y$ and $j$ is the inclusion, then we often identify $Z$ and $j(Z)$.

\subsection{Pseudomeromorphic currents}\label{PMsektion}

The function $1/z^\ell$ in $\C\setminus\{0\}$ extends to $\C$ as a principal value current. The current
$\dbar(1/z^\ell)$ is the associated residue current. If $\U\subset\C^r$ is open, $(z_1,\ldots,z_r)$ are coordinates in $\U$, and 
$\alpha$ is a smooth compactly supported form in $\U$, thus 
\begin{equation*}
\alpha\wedge \frac{1}{z_1^{\ell_1}} \cdots \frac{1}{z_s^{\ell_s}}  \dbar \frac{1}{z_{s+1}^{\ell_{s+1}}}\w\cdots\w\dbar\frac{1}{z_r^{\ell_r}}
\end{equation*}
exists as a tensor product in $\U$. Such a current is an \emph{elementary pseudomeromorphic current}.

A germ of a current $\tau$ at $x\in Y$ is \emph{pseudomeromorphic} 
if it is a finite sum of currents
\begin{equation*}
\pi^1_*\cdots\pi^m_*\nu,
\end{equation*}
where each $\pi^j\colon \U_{j}\to \U_{j-1}$ is either a modification, a simple projection $\U_{j-1}\times Z\to \U_{j-1}$,
or the inclusion of an open subset, and $\nu$ is elementary on $\U_m\subset \C^r$. The set of such germs is an open subset of 
the sheaf of currents on $Y$ and thus is a sheaf, the sheaf $\PM_Y$ of pseudomeromorphic current on $Y$.
This sheaf is closed under $\dbar$ and multiplication by smooth forms.
We refer to, e.g., \cite{AW3} for proofs of the statements below about pseudomeromorphic currents.

If $\tau$ is \pmm and the holomorphic function $h$ vanishes on the support of $\tau$, then $\bar h \tau =0= d\bar h\wedge\tau$.
Furthermore, we have the

\smallskip

\noindent \emph{Dimension principle:} If $\tau$ is pseudomeromorphic, has bidegree $(*,q)$ and support contained in a subvariety of codimension $>q$,
then $\tau=0$. 

\smallskip

\begin{ex}\label{ettgenomf}
If $f$ is a holomorphic function on $Y$, not identically $0$ on any irreducible component, 
then $1/f$, a priori defined outside $Z=f^{-1}(0)$, has a pseudomeromorphic extension to $Y$; cf.\ Example~\ref{ettgenomf2}.
By the dimension principle such an extension must be unique.
The residue current, $\dbar(1/f)$, clearly has support in $Z$.
\end{ex}

If $\tau$ is \pmm in $\U$ and $Z$ is a subvariety, then the natural restriction of $\tau$ to the open
subset $\U\setminus Z$ has a \pmm extension $\mathbf{1}_{\U\setminus Z}\tau$ to $\U$ such that  
\begin{equation}\label{rest0}
\1_{\U\setminus Z}\tau=\lim_{\eo} \chi(|f|^2v/ \e)\tau,
\end{equation}
if $f$ is any tuple of holomorphic functions with $\{f=0\}=Z$, $\chi$ is a smooth function
on $[0,\infty)$ that is $0$ in a neighborhood of $0$ and  $1$ in a neighborhood of $\infty$, and $v$ is a smooth strictly positive function. 
The right-hand side of \eqref{rest0} is indeed independent of the choice of $f$, $\chi$, and $v$.
It follows that
\begin{equation}\label{reimond}
\1_Z\tau:=\tau-\1_{\U\setminus Z}\tau
\end{equation}
is pseudomeromorphic and has support on $Z$. 
If $Z'$ is another subvariety, then
\begin{equation}\label{rest1}
\1_{Z'}\1_Z\tau= \1_{Z\cap Z'}\tau.
\end{equation}
If $\tau$ is \pmm and $\alpha$ is a smooth form, then\footnote{If noting else is suggested by brackets, $\1_Z$ is always assumed to
act on the whole expression on its right.}
 \begin{equation}\label{rest2}
\1_Z \alpha\w \tau= \alpha\w \1_Z\tau.
\end{equation}

If $g\colon X\to Y$ is proper, 
 $\nu$ and $g_*\nu$ are pseudomeromorphic, and  $Z$ is a subvariety of $Y$, then 
\begin{equation}\label{bara1}
\1_Z g_*\nu=g_*(\1_{g^{-1}(Z)} \nu).
\end{equation}

\begin{ex}\label{ettgenomf2}
With the setting in Example~\ref{ettgenomf} we have $\1_{Y\setminus Z}(1/f)=1/f$ by the dimension principle. 
In particular, cf.\ \eqref{rest0}, $\lim_{\epsilon\to 0}\chi(|f|^2v/\epsilon)/f=1/f$.
Thus, $1/f$ can be defined as a principal value current, which is the original definition of Herrera--Lieberman. 
\end{ex}

A current $a$ on $Y$ is \emph{almost semi-meromorphic}, $a\in ASM(Y)$, if $a=\pi_* (\alpha/\sigma)$, where
$\pi\colon X\to Y$ is a modification, $\sigma$ is a section of a line bundle $L\to X$, and $\alpha$ is a smooth form with values in $L$.
Notice that if $L$ is Hermitian and $\beta$ is a smooth form, then $\pi_*(\partial\log |\sigma|^2\w\beta)\in ASM(Y)$. 
The smallest Zariski closed set outside which $a\in ASM(Y)$ is smooth is called the Zariski singular support of $a$.
If $V$ is the Zariski singular support of $a$, then in view of \eqref{bara1} and Example~\ref{ettgenomf2} we have $\1_Va=0$.

\begin{lma}\label{ASMPM}
If $a\in ASM(Y)$ has Zariski singular support $V$ and $\tau\in\PM(Y)$, then $a\wedge \tau$, 
a~priori defined on $Y\setminus V$, has a unique \pmm extension $T$  to 
$Y$ such that  $\1_V T=0$. 
\end{lma}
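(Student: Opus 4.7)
The plan is to construct $T$ by the same kind of principal-value regularization that defines $\mathbf{1}_{\U\setminus Z}\tau$, and to deduce uniqueness directly from the decomposition \eqref{reimond}.

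Uniqueness is the quick step. Suppose $T_1$ and $T_2$ are two pseudomeromorphic extensions of $a\wedge \tau|_{Y\setminus V}$ satisfying $\mathbf{1}_V T_i=0$. Their difference $T_1-T_2$ is pseudomeromorphic and vanishes on $Y\setminus V$, so $\mathbf{1}_{Y\setminus V}(T_1-T_2)=0$; combined with $\mathbf{1}_V(T_1-T_2)=0$, \eqref{reimond} gives $T_1-T_2 = \mathbf{1}_V(T_1-T_2)+\mathbf{1}_{Y\setminus V}(T_1-T_2)=0$.

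For existence, I would work locally and pick a tuple $f=(f_1,\ldots,f_N)$ of holomorphic functions on a neighborhood of a point of $V$ with $\{f=0\}=V$. With $\chi$ and $v$ as in \eqref{rest0}, set
\begin{equation*}
T_\epsilon \;:=\; \chi(|f|^2 v/\epsilon)\, a \wedge \tau.
\end{equation*}
Since $\chi(|f|^2 v/\epsilon)$ vanishes in a neighborhood of $V$ and $a$ is smooth on the complement, the factor $\chi(|f|^2v/\epsilon)\,a$ is an honest smooth form on $Y$; thus $T_\epsilon$ is a well-defined pseudomeromorphic current by \eqref{rest2}. The candidate is then $T:=\lim_{\epsilon\to 0}T_\epsilon$. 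To see that the limit exists as a pseudomeromorphic current, I would use the representation $a=\pi_*(\alpha/\sigma)$ together with \eqref{bara0} to write
\begin{equation*}
\chi(|f|^2 v/\epsilon)\, a \;=\; \pi_*\!\bigl(\chi(|\pi^*f|^2\pi^*v/\epsilon)\,\alpha/\sigma\bigr),
\end{equation*}
and then expand $\tau$ using its local structure as a finite sum of pushforwards $\pi^1_*\cdots\pi^m_*\nu$ of elementary pseudomeromorphic currents. On a common resolution where $\sigma$ and $\pi^{*}f$ both define normal-crossings divisors, each summand reduces to a product of principal-value/residue factors against a smooth form, for which the existence of the $\chi(|\cdot|^2/\epsilon)$-limit is standard (cf.\ \cite{AW3} and the construction of elementary currents in Section~\ref{PMsektion}).

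Once the limit $T$ exists, the remaining two properties are straightforward. On $Y\setminus V$ the form $a$ is smooth and $\chi(|f|^2v/\epsilon)\to 1$, so by the usual regularization argument $T_\epsilon\to a\wedge \tau$ on $Y\setminus V$, proving that $T$ extends the a priori product. For $\mathbf{1}_V T=0$, apply the defining formula \eqref{rest0} to $T$: using the same tuple $f$,
\begin{equation*}
\mathbf{1}_{Y\setminus V}T \;=\; \lim_{\epsilon'\to 0}\chi(|f|^2 v/\epsilon')\,T \;=\; \lim_{\epsilon'\to 0}\lim_{\epsilon\to 0}\chi(|f|^2 v/\epsilon')\chi(|f|^2 v/\epsilon)\,a\wedge\tau\;=\;T,
\end{equation*}
the last equality because the product $\chi(|f|^2v/\epsilon')\chi(|f|^2v/\epsilon)$ is just another admissible cut-off family. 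Hence $\mathbf{1}_V T=T-\mathbf{1}_{Y\setminus V}T=0$ by \eqref{reimond}.

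The main obstacle is genuinely the pseudomeromorphy of the limit $T$: the formal manipulations above rely on knowing that on a common resolution, cut-offs in an NCD applied to products $(\alpha/\sigma)\wedge\nu$ of a semi-meromorphic factor and an elementary current converge. I would view this as the technical core and would cite \cite{AW3} rather than redo the normal-crossings computation in detail; everything else is then bookkeeping with the rules \eqref{bara0}--\eqref{rest2} for $\mathbf{1}_Z$.
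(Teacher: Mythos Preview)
The paper does not actually prove Lemma~\ref{ASMPM}; it is stated as a known fact with the blanket reference to \cite{AW3} given at the start of Section~\ref{PMsektion}. Your sketch is essentially the argument carried out there: uniqueness via $\1_V+\1_{Y\setminus V}=1$, existence via the cut-off regularization $\chi(|f|^2v/\e)\,a\wedge\tau$, and the reduction (through the tower of maps defining $\tau$ and a further resolution making $\sigma$ monomial) to products of elementary currents where the limit is elementary. Your honest flagging of that last step as the technical core, deferred to \cite{AW3}, matches exactly what the paper does. One minor quibble: the fact that $T_\e$ is pseudomeromorphic is not \eqref{rest2} but the earlier statement that $\PM$ is closed under multiplication by smooth forms; and your double-limit argument for $\1_VT=0$ can be shortened by noting directly that $\chi_{\e'}T=\chi_{\e'}a\wedge\tau$ since $T$ agrees with $a\wedge\tau$ on the support of $\chi_{\e'}$.
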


We write $a\wedge\tau$ for the extension as well. By \eqref{rest0} it follows that if $f$ is a tuple of holomorphic functions
with $\{f=0\}=V$, then $\chi(|f|^2v/\epsilon)a\wedge\tau\to a\wedge\tau$.

 \subsection{Two operations on $\PM$}\label{oponPM}
Let $\sigma$ be a section of a Hermitian holomorphic vector bundle $E\to Y$ with zero set\footnote{$Z$ may contain irreducible components of $Y$.}
$Z$. 

\begin{lma}\label{benin}
For each $k=1,2,\ldots$, there is a (necessarily unique) almost semi-meromorphic current $m_k^\sigma$ in $Y$ that coincides with 
$(2\pi i)^{-1}\partial\log|\sigma|^2\w (dd^c\log|\sigma|^2)^{k-1}$
outside $Z$ and such that $\1_Z m_k^\sigma=0$.
\end{lma}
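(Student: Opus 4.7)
The plan is to reduce, via a principalization $\pi\colon X \to Y$, to a semi-meromorphic form on $X$ to which $\pi_*$ may be applied.

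First, uniqueness is automatic. If $m$ and $m'$ both satisfy the hypotheses, then $\tau := m - m'$ is almost semi-meromorphic, vanishes on $Y\setminus Z$, and satisfies $\1_Z\tau = 0$. Decomposing $\tau = \1_{Y\setminus Z}\tau + \1_Z\tau$ and using that $\1_{Y\setminus Z}\tau$ is the canonical pseudomeromorphic extension of $\tau|_{Y\setminus Z} = 0$, we conclude $\tau = 0$.

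For existence, I invoke Hironaka-type resolution/principalization (applied locally in an embedding of $Y$, followed by resolution of $Y$ itself) to obtain a modification $\pi\colon X\to Y$ with $X$ smooth such that $\pi^*\sigma = \sigma_0\otimes\sigma''$, where $\sigma_0$ is the tautological section of a line bundle $L = \Ok_X(D)$ cutting out a divisor $D = \pi^{-1}(Z)$, and $\sigma''$ is a nowhere vanishing section of $\pi^*E\otimes L^{-1}$. Picking any Hermitian metric $h$ on $L$ and using the induced metric on $\pi^*E\otimes L^{-1}$, one has
\[
\log|\pi^*\sigma|^2 = \log|\sigma_0|^2 + \log|\sigma''|^2
\]
on $X$, with $|\sigma''|^2$ smooth and strictly positive. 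Hence the smooth $(1,1)$-form
\[
\omega := -c_1(L,h) + dd^c\log|\sigma''|^2
\]
coincides with $dd^c\log|\pi^*\sigma|^2$ on $X\setminus D$, while $\partial\log|\pi^*\sigma|^2$ is semi-meromorphic with polar locus $D$ (locally $df/f + \partial\log h(e,e) + \partial\log|\sigma''|^2$, where $f$ is a local defining function of $D$ and $e$ a frame of $L$). I then set
\[
m_k^\sigma := \pi_*\!\left(\frac{1}{2\pi i}\,\partial\log|\pi^*\sigma|^2 \wedge \omega^{k-1}\right),
\]
which is almost semi-meromorphic on $Y$ by the very definition of $ASM(Y)$.

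For the two required properties: since $\pi^*$ commutes with $\partial$, $dd^c$, and $\wedge$, on $X\setminus D = \pi^{-1}(Y\setminus Z)$ the integrand equals $\pi^*\gamma$ with $\gamma := (2\pi i)^{-1}\partial\log|\sigma|^2 \wedge (dd^c\log|\sigma|^2)^{k-1}$, smooth on $Y\setminus Z$; the modification identity $\pi_*\pi^* = \mathrm{id}$ on smooth forms then yields $m_k^\sigma = \gamma$ on $Y\setminus Z$. Vanishing of $\1_Z m_k^\sigma$ is built in: the Zariski singular support $V$ of $m_k^\sigma$ lies in $\pi(D)\subseteq Z$, so $\1_V m_k^\sigma = 0$ by the general ASM property recalled in the excerpt, while $\1_{Z\setminus V}m_k^\sigma = 0$ since $m_k^\sigma$ is smooth on the open set $Y\setminus V$ and smooth forms are killed by restriction to a proper analytic subset. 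The main nontriviality is the initial principalization producing the factorization $\pi^*\sigma = \sigma_0\otimes\sigma''$; once this is in hand, everything reduces to routine bookkeeping with semi-meromorphic forms, the key observation being that the singularities of $dd^c\log|\pi^*\sigma|^2$ are absorbed into a globally smooth form $\omega$ on $X$ (away from $D$), so that its $(k-1)$-st power poses no difficulty.
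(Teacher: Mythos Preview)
Your approach is essentially the paper's: principalize the ideal of $\sigma$ via a modification $\pi\colon X\to Y$, factor $\pi^*\sigma$ through a line bundle section, and push forward the resulting semi-meromorphic form. The paper uses a normal (not necessarily smooth) $X$ and equips $L$ with the metric induced from $\pi^*E$ so that $|\sigma^0|_L=|\pi^*\sigma|$, which makes the extra $\log|\sigma''|^2$ term disappear and reduces $\omega$ to $-c_1(L)$; your choice of an arbitrary metric and the resulting smooth correction is equally valid.

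One genuine omission: the paper explicitly allows $Z$ to contain entire irreducible components of $Y$ (this is the point of the footnote preceding the lemma). On such a component $\pi^*\sigma\equiv 0$, so your factorization $\pi^*\sigma=\sigma_0\otimes\sigma''$ with $\sigma''$ nowhere vanishing is impossible there. The paper deals with this by working on the union $X'$ of components of $X$ where $\pi^*\sigma$ is generically nonzero and declaring the integrand to be $0$ on $X\setminus X'$; you should do the same. With that one-line fix your argument is complete.
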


\begin{proof}
Let $\pi\colon X\to Y$ be a normal modification such that for each connected component of $X$, either $\pi^*\sigma=0$ or
$\pi^*\sigma=\sigma^0 \sigma'$, where 
$\sigma^0$ is a generically non-vanishing section of a line bundle $L$ and $\sigma'$ is a non-vanishing section of $L^*\otimes\pi^*E$. 
Equip $L$ with a Hermitian metric by setting $|s|_L^2:=|s\sigma'|^2_{\pi^*E}$ for any section $s$ of $L$. 
In particular, $|\pi^*\sigma|^2=|\sigma^0|^2_L$ on the union $X'$ of the components of $X$ where
$\pi^*\sigma$ is generically non-vanishing. By the Poincar\'e--Lelong formula thus
$$
dd^c\log\big|\pi^*\sigma|_{X'}\big|^2=dd^c\log|\sigma^0|^2_L=\div \sigma^0 - c_1(L).
$$
Since $\pi$ is  a biholomorphism generically we have $1=\pi_*1$ as currents. By \eqref{bara0} thus,
\begin{equation}\label{ghana}
(2\pi i)^{-1}\partial\log|\sigma|^2\w (dd^c\log|\sigma|^2)^{k-1}
=
\pi_*\big((2\pi i)^{-1}\partial\log|\sigma^0|_L^2\w (-c_1(L))^{k-1}\big)
\end{equation}
outside $Z$. Defining $\partial\log|\sigma^0|_L^2\w (-c_1(L))^{k-1}=0$ on $X\setminus X'$
the lemma follows since the right-hand side then is in $ASM(Y)$ and vanishes on irreducible components of $Y$
contained in $Z$.
\end{proof}

By Lemma~\ref{ASMPM} we can now define our first operation:
\begin{equation}\label{barra}
m_k^\sigma\colon\PM_Y\to\PM_Y,\quad\quad \tau\mapsto m^\sigma_k\w \tau, \quad k=1,2,\ldots. 
\end{equation}
In view of Lemma~\ref{ASMPM}, $\1_Z m^\sigma_k\w \tau=0$, and if $\tau$ has support in $Z$, then $m_k^\sigma\wedge\tau=0$. 
For any pseudomeromorphic $\tau$,
by the comment after Lemma~\ref{ASMPM}, we have 
\begin{equation}\label{lillamdef}
m_k^\sigma\wedge\tau=\lim_{\e\to 0} \chi(|\sigma|^2/\e) (2\pi i)^{-1}\partial\log|\sigma|^2\w (dd^c\log|\sigma|^2)^{k-1}\wedge\tau.
\end{equation}

\smallskip

Our second operation is the following:
\begin{equation*}
M_k^\sigma\colon\PM_Y\to\PM_Y, \quad
M_0^\sigma\wedge\tau = \1_{Z} \tau,
\quad  M^\sigma_k\w\tau=\1_Z\dbar (m_k^\sigma\wedge\tau), \quad k=1,2,\ldots.
\end{equation*}
If $\tau$ has support in $Z$, then $M_0^\sigma\wedge\tau=\tau$ and
$M_k^\sigma\wedge\tau=0$, $k=1,2,\ldots$.
One can check that for any pseudomeromorphic $\tau$,
\begin{equation}\label{Mdef}
M_0^\sigma\wedge\tau = \lim_{\e\to 0}(1-\chi(|\sigma|^2/\e))\tau, \quad
M_k^\sigma\wedge\tau = \lim_{\e\to 0}\dbar \chi(|\sigma|^2/\e)\wedge m_k^\sigma\wedge\tau,\quad k=1,2,\ldots.
\end{equation}
If $\alpha$ is a smooth form, then for instance by \eqref{Mdef},
\begin{equation}\label{prog1}
\alpha\w M^\sigma_k\w\tau=M^\sigma_k\w (\alpha\w\tau).
\end{equation}
Moreover, if $\tau=g_*\nu$ is pseudomeromorphic, where  $g\colon X\to Y$ is a proper holomorphic mapping and $\nu$ is \pmm on $X$, 
then in view of \eqref{bara0}, \eqref{Mdef}, and \eqref{lillamdef}
\begin{equation}\label{morr}
M_k^\sigma\wedge\tau = g_*(M^{g^*\sigma}_k\wedge\nu), \quad \quad m_k^\sigma\wedge\tau=g_*(m^{g^*\sigma}_k\wedge\nu).
\end{equation} 

We write $M_k^\sigma$ instead of $M_k^\sigma\w 1$. By the dimension principle, $M_k^\sigma=0$ for $k<\text{codim}\, Z$.

\begin{ex}\label{fredag}
Assume that $Z$ has pure codimension $\kappa$.
In view of \cite[Corollary~1.3]{aeswy1} we have $M_\kappa^\sigma=\sum_ja_j[Z_j]$, where $Z_j$ are the
irrecucible components of $Z$ and $a_j$ are positive integers. 
It follows that 
(the Lelong current of) any cycle  is pseudomeromorphic. 

If $Y$ is smooth and $\sigma$ generates the ideal sheaf $\J_Z$ of holomorphic functions vanishing on $Z$, 
then $M_\kappa^\sigma=[Z]$; cf.\ \cite[Corollary~1.3]{aswy}.
If $Y$ is not smooth the same holds if no component of $Z$ is contained in $Y_{sing}$. In fact, 
it holds on $Y_{reg}$ and therefore it holds on $Y$ by the dimension principle.
\end{ex}

\smallskip

Assume now that there is a regular sequence $f=(f_1,\ldots,f_\kappa)$ at $x\in Y$ such that $\{f=0\}=Z$; 
in particular then $\text{codim}\, Z_x=\kappa$.
We can considering $f$ as a section of the trivial rank-$\kappa$ vector bundle equipped with the trivial metric. 
Then 
$\dbar m_\kappa^f=0$ outside $Z$. By Lemma~\ref{ASMPM} we get that if $\tau$ is pseudomeromorphic, 
then $\1_{Y\setminus Z}\dbar(m_\kappa^f\wedge\tau)=-m_\kappa^f\w\dbar\tau$. It follows that
\begin{equation}\label{re3}
\dbar (m^f_\kappa\w\tau)=\1_Z\dbar(m^f_\kappa\w\tau) + \1_{Y\setminus Z}\dbar(m^f_\kappa\w\tau)
=M^f_\kappa\w\tau-m^f_\kappa\w \dbar\tau
\end{equation}
and, by applying $\dbar$ to \eqref{re3}, that 
\begin{equation}\label{re2}
\dbar (M^f_\kappa\w\tau)=M_\kappa^f\w\dbar\tau.
\end{equation}

\begin{ex}\label{BMex}
Let  $(\zeta,z)$ be coordinates in $\C^n\times \C^n$ and let $\eta=\zeta-z$. Let $p_1,p_2\colon\C^n\times \C^n\to \C^n$ be the projections
on the first and second factor, respectively, and let $i\colon \C^n\to\C^n\times \C^n$ be the diagonal embedding. 
We claim that if $\mu$ is pseudomeromorphic in 
$\mathcal{U}\subset\C^n$ then
\begin{equation}\label{extra}
M_n^\eta\wedge (1\otimes \mu) = i_*\mu
\end{equation}
in $\mathcal{U}\times \mathcal{U}$. If additionally $\mu$ has compact support, then
\begin{equation}\label{extra2}
\mu = \dbar \big((p_1)_* (m_n^\eta\wedge (1\otimes \mu))\big) + (p_1)_*(m_n^\eta\wedge(1\otimes \dbar\mu)).
\end{equation}

To see \eqref{extra}, let $[\Delta]:=i_*1$ be the diagonal. Then $[\Delta]\wedge(1\otimes \mu)$ is well-defined as a tensor product; in fact,
in the coordinates $(\eta,z)=(\zeta-z,z)$ it is just $\delta_0(\eta)\otimes \mu(z)$. It follows that
\begin{equation*}
[\Delta]\wedge(1\otimes \mu) = i_*\mu
\end{equation*}
in $\mathcal{U}\times \mathcal{U}$. Since by Example~\ref{fredag}
we have $M_n^\eta=[\Delta]$,
\eqref{extra} follows. To see \eqref{extra2}, notice that by \eqref{re3} we have
\begin{equation*}
\dbar(m_n^\eta\wedge (1\otimes\mu)) = 
M_n^\eta\wedge(1\otimes \mu) - m_n^\eta\wedge (1\otimes\dbar\mu).
\end{equation*}
If $\mu$ has compact support then we can apply $(p_1)_*$. By \eqref{extra}, and since $p_1\circ i=\text{id}$, thus \eqref{extra2} follows.
Notice that $m_n^\eta=\partial |\eta|^2\wedge (\dbar\partial |\eta|^2)^{n-1}/(2\pi i|\eta|^{2})^n$
is the Bochner-Martinelli kernel and that  \eqref{extra2} is the Bochner-Martinelli formula.
\end{ex}

\begin{remark}\label{altMdef}
Let $\sigma$ be a holomorphic section of $E\to Y$ as in the beginning of this section.
Assume that $\tau=g_*\beta$ is pseudomeromorphic, where $g\colon X\to Y$ is proper and $\beta$ is a product of components of 
Chern forms of various Hermitian vector bundles over $X$. Then $\tau$ is a \emph{generalized cycle}, a notion that was introduced in \cite{aeswy1}.
It follows from \cite[Section~5]{aeswy1} that 
$$
M_k^\sigma\wedge\tau=\1_Z (dd^c\log |\sigma|^2)^k\w\tau,
$$
where 
$$
(dd^c\log |\sigma|^2)^k\w\tau=\lim_{\e\to 0} (dd^c\log(|\sigma|^2+\e))^k\w\tau.
$$
\end{remark}

 \subsection{The inequality \eqref{codimolikhet}}\label{kodimolikhetsektion}
Let us recall why \eqref{codimolikhet} holds in a complex manifold $Y$.  By induction it is enough to verify it for
two analytic sets $A_1$ and $A_2$ in $Y$. Let $n=\text{dim}\, Y$. 
Notice that if $i\colon Y \to Y\times Y$ is the diagonal embedding and $\Delta=i_*Y$, then
$$
i_*(A_1\cap A_2)=(A_1\times A_2)\cap  \Delta.
$$
Locally in $Y\times Y$, $\Delta$ is defined by $n$ functions so that the right-hand side is obtained by
successive intersection by $n$ divisors (this is not true when $Y$ is singular so then
the argument breaks down). 
It is well-known that each such intersection can decrease
the dimension by at most one unit,
see, e.g.,  \cite[Theorem II~6.2]{Dem}. 
Hence $\text{dim}\, i_*(A_1\cap A_2)\geq \text{dim}\,A_1+\text{dim}\,A_2-n$ and so \eqref{codimolikhet} follows.  

From  \eqref{codimolikhet} we get the following useful observation.

\begin{lma}\label{gurka} 
Assume that $A_1,\ldots, A_r$ are germs of analytic sets of pure codimensions at a point in a smooth manifold $Y$. 
If the intersection $A_1\cap \cdots \cap A_r$ is proper, then each intersection 
$A_1\cap \cdots \cap A_\nu$, $\nu\le r$, is proper.
\end{lma}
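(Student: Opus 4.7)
The plan is to argue by a direct codimension estimate, using \eqref{codimolikhet} both forwards and backwards. Write $\kappa_j=\codim A_j$ and set $B=A_1\cap\cdots\cap A_\nu$. Since each $A_j$ has pure codimension and $Y$ is smooth, applying \eqref{codimolikhet} to $A_1,\ldots,A_\nu$ gives one inequality
\[
\codim B \leq \kappa_1+\cdots+\kappa_\nu,
\]
and the task is to establish the reverse inequality.

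For this I would pass to an irreducible component. Choose an irreducible component $B'$ of the germ $B$ at the basepoint that realizes the codimension, i.e.\ $\codim B'=\codim B$. Being irreducible, $B'$ has pure codimension, and by hypothesis so do $A_{\nu+1},\ldots,A_r$; thus \eqref{codimolikhet} applies again, now to the family $B',A_{\nu+1},\ldots,A_r$ in the smooth manifold $Y$, yielding
\[
\codim\bigl(B'\cap A_{\nu+1}\cap\cdots\cap A_r\bigr)\leq \codim B'+\kappa_{\nu+1}+\cdots+\kappa_r.
\]

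Now $B'\subseteq A_1\cap\cdots\cap A_\nu$, so $B'\cap A_{\nu+1}\cap\cdots\cap A_r\subseteq A_1\cap\cdots\cap A_r$; the latter has codimension exactly $\kappa_1+\cdots+\kappa_r$ by the properness hypothesis. (As germs at the basepoint none of these sets is empty, since every $A_j$ contains the basepoint.) Therefore the left-hand side above is at least $\kappa_1+\cdots+\kappa_r$, and rearranging gives $\codim B'\geq \kappa_1+\cdots+\kappa_\nu$. Combined with the opposite inequality from the first paragraph this yields $\codim B=\kappa_1+\cdots+\kappa_\nu$, so $A_1,\ldots,A_\nu$ intersect properly.

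The main subtlety—mild but worth flagging—is that $B=A_1\cap\cdots\cap A_\nu$ need not have pure codimension a priori, so \eqref{codimolikhet} cannot be applied directly to $B,A_{\nu+1},\ldots,A_r$; passing to an irreducible component $B'$ of minimum codimension is exactly what circumvents this. Everything else is formal bookkeeping, and the argument uses smoothness of $Y$ solely through \eqref{codimolikhet}, which the paper has already flagged as failing in the singular setting.
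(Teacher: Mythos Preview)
Your proof is correct and follows the same approach as the paper, sandwiching $\codim(A_1\cap\cdots\cap A_\nu)$ between two applications of \eqref{codimolikhet}. The only difference is that the paper applies \eqref{codimolikhet} directly to $(A_1\cap\cdots\cap A_\nu),A_{\nu+1},\ldots,A_r$ without first isolating an irreducible component---tacitly using that the codimension inequality for germs at a point holds regardless of purity---whereas your passage to $B'$ makes this step explicit; it is a harmless refinement rather than a different argument.
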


\begin{proof}
Assume that $A_1,\ldots, A_r$ have codimensions $\kappa_1,\ldots \kappa_r$, respectively. By assumption,
$\codim (A_1\cap \cdots \cap A_r)=\kappa_1+\cdots +\kappa_r$. 
In view of \eqref{codimolikhet} thus
\begin{multline*}
\kappa_1+\cdots +\kappa_r= \codim\big((A_1\cap \cdots \cap A_\nu)\cap A_{\nu+1}\cap \cdots\cap A_r\big)\le \\
=\codim (A_1\cap \cdots \cap A_\nu)+\kappa_{\nu+1}+\cdots +\kappa_r
\le \kappa_1+\cdots \kappa_r.
\end{multline*}
All inequalities thus are equalities and we conclude that $\codim (A_1\cap \cdots \cap A_\nu)=\kappa_1+\cdots +\kappa_\nu$
as desired. 
\end{proof}

\section{A $\dbar$-potential approach to proper intersections}\label{frame}
Let $Y$ be a reduced analytic space of pure dimension $n$. We now describe our intrinsic approach
to a proper intersection product in $Y$ of cycles which locally admit good
potentials.

 \begin{df}
 Assume that $\mu$ is a cycle in $Y$ of pure codimension $\kappa\ge 1$ with support $Z$.
 We say that a \pmm current $u$ of bidegree
 $(\kappa,\kappa-1)$ in an open subset $\U\subset Y$  is a {\it good $\dbar$-potential} (or simply a \emph{good potential}) of $\mu$
 if $u$ is smooth in $\U\setminus Z$ and $\dbar u=\mu$.
 \end{df}
 
It follows from the dimension principle (see Section~\ref{PMsektion}) that $\1_Z u=0$.
If $\chi_\epsilon=\chi(|f|^2/\epsilon)$, where $Z(f)=Z$, therefore
$\chi_\epsilon u$ are smooth and $\chi_\epsilon u\to u$; cf.\ \eqref{rest0} and \eqref{reimond}. 
Since $\dbar u=0$ outside $Z$ thus $\mu_\epsilon %=\dbar  u_\epsilon
=\dbar\chi_\e\w u$
are smooth, $\dbar$-closed, and $\mu_\epsilon\to \mu$.

\smallskip

Let $\mu_1$ and $\mu_2$ be (germs of) cycles of pure codimensions $\kappa_j\geq 1$ at a point $x\in Y$, let $Z_j=|\mu_j|$, and 
assume that $u_j$ are good potentials of $\mu_j$, $j=1,2$.

\begin{prop} \label{baker1}
The smooth form $u_2\w u_1$,  
a~priori defined in the Zariski open set $Z_2^c\cap Z_1^c$, has a unique
\pmm extension $T$ to $Y$ such that  $1_{Z_2\cup Z_1}T=0$.
\end{prop}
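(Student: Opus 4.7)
\textbf{Uniqueness} follows immediately: if $T, T'$ are two \pmm extensions as in the statement, then $T - T'$ is \pmm with support in $Z_2 \cup Z_1$, so
\begin{equation*}
T - T' = \1_{Z_2 \cup Z_1}(T - T') = \1_{Z_2 \cup Z_1}T - \1_{Z_2 \cup Z_1}T' = 0.
\end{equation*}

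For \textbf{existence}, I would construct $T$ by a one-parameter regularization of $u_1$. Working locally near $x$, choose a holomorphic tuple $f_1$ with $\{f_1 = 0\} = Z_1$ and set $\chi_\epsilon := \chi(|f_1|^2/\epsilon)$. The dimension principle gives $\1_{Z_1}u_1 = 0$, since $u_1$ has bidegree $(\kappa_1,\kappa_1-1)$ while $Z_1$ has pure codimension $\kappa_1$. Because $u_1$ is smooth on $Y \setminus Z_1$ and $\chi_\epsilon$ vanishes in a neighborhood of $Z_1$, the form $\chi_\epsilon u_1$ is smooth on $Y$, and by \eqref{rest0}, $\chi_\epsilon u_1 \to u_1$ as \pmm currents when $\eo$. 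Define
\begin{equation*}
T_\epsilon := u_2 \wedge \chi_\epsilon u_1,
\end{equation*}
a \pmm current on $Y$ (the \pmm $u_2$ wedged with a smooth form), and set $T := \lim_{\eo} T_\epsilon$.

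The core of the argument is the existence of this limit. Using the local \pmm presentation $u_2 = \sum_\alpha \pi^\alpha_* \nu^\alpha$ -- where each $\pi^\alpha$ is a composition of modifications, simple projections, and open inclusions, and each $\nu^\alpha$ is elementary on $X^\alpha$ -- one writes, via \eqref{bara0},
\begin{equation*}
T_\epsilon = \sum_\alpha \pi^\alpha_* \Bigl(\nu^\alpha \wedge \chi\bigl(|(\pi^\alpha)^*f_1|^2/\epsilon\bigr)\,(\pi^\alpha)^*u_1\Bigr).
\end{equation*}
After a further modification of $\pi^\alpha$ one may assume $(\pi^\alpha)^*f_1$ is a monomial $z^a$ in local coordinates on $X^\alpha$; then $(\pi^\alpha)^*u_1$ is smooth off $\{z^a = 0\}$, and the cutoff $\chi(|z^a|^2/\epsilon)$ makes the bracketed expression smooth on $X^\alpha$. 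Its limit as $\eo$ is then computed as in \eqref{rest0}, producing the canonical \pmm extension of $\nu^\alpha \wedge (\pi^\alpha)^*u_1$ across the monomial divisor; pushing forward yields $T$.

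It remains to verify that $T|_{Z_2^c \cap Z_1^c} = u_2 \wedge u_1$ (immediate since $\chi_\epsilon \to 1$ off $Z_1$ and both factors are smooth), $\1_{Z_1}T = 0$ (built into the regularization), and $\1_{Z_2}T = 0$, which follows by running the construction symmetrically with the cutoff $\chi(|f_2|^2/\delta)u_2$ (smooth on $Y$), using $\1_{Z_2}u_2 = 0$ and the commutativity of the iterated limits. The \textbf{main obstacle} is the convergence of $T_\epsilon$: since $u_1$ is only \pmm and not necessarily almost semi-meromorphic, Lemma~\ref{ASMPM} does not apply directly to the wedge $u_2 \wedge u_1$. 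The crucial step is therefore to refine $\pi^\alpha$ so that the pulled-back current $(\pi^\alpha)^*u_1$ acquires sufficient structure -- e.g., becomes almost semi-meromorphic along the monomial divisor -- so that the limit in \eqref{rest0} is genuinely a \pmm current on $X^\alpha$.
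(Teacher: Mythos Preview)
Your uniqueness argument is correct and matches the paper's.

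For existence, however, there is a genuine gap --- and you identify it yourself in your final paragraph. The convergence of $T_\epsilon = u_2 \wedge \chi_\epsilon u_1$ is the entire difficulty, and the fix you propose does not work. The problem is twofold. First, there is no general pullback $(\pi^\alpha)^* u_1$ of a \pmm current along the maps $\pi^\alpha$; what you pull back is only the smooth form $\chi_\epsilon u_1$, and as $\epsilon\to 0$ you lose that smoothness. Second, and more fundamentally, there is no reason for the smooth form $(\pi^\alpha)^* u_1$ (defined off $(\pi^\alpha)^{-1}Z_1$) to acquire an almost semi-meromorphic extension after any further modification. Good potentials are \pmm but generally not ASM --- think of the potentials produced in Lemma~\ref{apa} by convolution with the Bochner--Martinelli kernel --- and the class $\PM$ is not stable under pullback. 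So \eqref{rest0} cannot be invoked, because you have no \pmm current on $X^\alpha$ to apply it to. The ``commutativity of iterated limits'' you appeal to for $\1_{Z_2}T=0$ is likewise unjustified.

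The paper sidesteps this completely via a diagonal trick. Rather than multiply $u_2$ and $u_1$ on $Y$, it forms the \emph{tensor product} $u_2\otimes u_1$ on $Y\times Y$, which is \pmm with no analysis required. The operation $M_n^\eta\wedge(\cdot)$ from Section~\ref{oponPM} is then applied, with $\eta$ a tuple cutting out the diagonal $\Delta$; this is a well-defined map $\PM\to\PM$. Since $M_n^\eta=[\Delta]$ (Example~\ref{fredag}; $\Delta\not\subset(Y\times Y)_{sing}$), one has $M_n^\eta\wedge(u_2\otimes u_1)=i_*i^*(u_2\otimes u_1)$ on $Z_2^c\times Z_1^c$. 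Pushing forward by the projection $p\colon Y\times Y\to Y$ (a simple projection, so it preserves $\PM$) and then applying $\1_{Z_2^c\cap Z_1^c}$ gives the desired $T$. Every step is a standard operation on \pmm currents; no limit of smooth regularizations is needed.
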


\begin{proof}
If $T$ and $T'$ are \pmm currents with the stated properties, then 
$$
T-T'=\1_{Z_2^c\cap Z_1^c}(T-T')+\1_{Z_2\cup Z_1}(T-T')=0
$$
since they coincide in $Z_2^c\cap Z_1^c$ 
and both vanish on the complement.

For the existence of the extension, %suppose first that $r=2$.
let $i\colon Y\to Y\times Y$ be the diagonal embedding and let $\Delta=i_*Y$.
Since $\Delta$ is not contained in $(Y\times Y)_{sing}$ it follows from Example~\ref{fredag} that
$M_n^\eta=[\Delta]$, where $\eta$ is a tuple of holomorphic functions defining $\Delta$. 
By  \eqref{prog1}  we have
$$
M_n^\eta\wedge(u_2\otimes u_1)=(u_2\otimes u_1)\w M_n^\eta= (u_2\otimes u_1)\w [\Delta] = i_*i^*(u_2\otimes u_1)
$$
in $Z_2^c\times Z_1^c$. 
If $p\colon Y\times Y\to Y$ is the projection, e.g., on the first factor, then $p\circ i=\text{id}_Y$ and
it follows that 
$
p_*\big(M_n^\eta\w (u_2\otimes u_1)\big)
$
is equal to $u_2\w u_1$ in  $Z_2^c\cap Z_1^c$. 
Since $p$ is a simple projection, $p_*$ preserves pseudomorphicity and thus 
 $$
 T:=\1_{Z_2^c\cap Z_1^c} p_*\big(M_n^\eta\wedge (u_2\otimes u_1)\big)
 $$
 is the desired \pmm extension.
\end{proof}

We denote the extension $T$ by $u_2\w u_1$ %$u_r\w\cdots\w u_1$ 
as well. It is immediate from the proposition that
this product is anti-commutative since its restriction to 
$Z_2^c\cap Z_1^c$ %$\cap_1^r Z_j^c$ 
is.

\smallskip

We will now define the product of $u_2$ by $\mu_1$. Notice that $u_2\w \mu_1$ is well-defined in $Z_2^c$. We claim that
$\mathcal{T}=-\1_{Z_2^c}\dbar (u_2\wedge u_1)$
is the unique \pmm extension of $u_2\w \mu_1$ to $Y$ such that $\1_{Z_2}\mathcal{T}=0$. The uniqueness of such an extension
is clear, and
since $u_2$ is $\dbar$-closed in $Z_2^c$,  $\mathcal{T}$ is indeed an extension, so the claim follows. We denote this extension by
$u_2\w \mu_1$.  That is,  
\begin{equation}\label{totem1}
u_2\w\mu_1=-\1_{Z_2^c}\dbar (u_2\wedge u_1).
\end{equation}
It follows from the dimension principle that if \eqref{barbados} below holds and $T$ is any pseudomeromorphic
current of pure bidegree such that $T=u_2\w\mu_1$ in $Z_2^c$
and $\text{supp}\,T\subset Z_1$, then $T=u_2\w\mu_1$ in $Y$. 

We now define
\begin{equation}\label{totem15}
\mu_2\w \mu_1:=\dbar(u_2\w \mu_1).
\end{equation}
It is clear that  $\mu_2\w \mu_1$ has support on $Z_2\cap Z_1$. However, without further assumptions it may depend on the choice of
potential $u_2$. 
If $f$ is a holomorphic tuple with zero set $Z_2$ we have 
\begin{equation}\label{smalben}
u_2\w\mu_1=\lim_{\e\to 0}\chi(|f|^2/\e)u_2\w\mu_1
\end{equation}
since $\1_{Z_2}u_2\w\mu_1=0$, cf.\ \eqref{rest0} and \eqref{reimond}. Since $\dbar(u_2\w\mu_1)=0$ in $Z_2^c$
it follows that 
$$\mu_2\w\mu_1=\lim_{\e\to 0}\dbar\chi(|f|^2/\e)\w u_2\w\mu_1.$$

\smallskip

\begin{df}\label{properdef}
We say that $\mu_2$ and $\mu_1$ intersect properly if 
\begin{equation}\label{barbados}
\codim (Z_2\cap  Z_1)\ge \codim Z_2+\codim Z_1.
\end{equation}
\end{df}

Since $\mu_2\w\mu_1$ has support in $Z_2\cap Z_1$ it follows from the dimension principle that $\mu_2\w\mu_1=0$ if 
$\codim (Z_2\cap  Z_1)> \codim Z_2+\codim Z_1$.

\begin{prop}\label{snoddas1}
Assume that  $Z_2$ and $Z_1$ intersect properly.
With the notation above,  
\begin{equation}\label{snorre}
\mu_2\w\mu_1=\mu_1\w\mu_2,
\end{equation}
and the product is independent of the choice of good potentials in the
definition.
\end{prop}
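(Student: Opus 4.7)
The plan is to prove commutativity first, and then deduce independence of the potentials from it. Set $S := u_2\wedge u_1$, understood as the pseudomeromorphic extension provided by Proposition~\ref{baker1}. By the uniqueness part of that proposition (applied to the two candidate extensions $u_1\wedge u_2$ and $-u_2\wedge u_1$, which agree as smooth forms on $Z_1^c\cap Z_2^c$ because both $u_j$ have odd total degree $2\kappa_j-1$), we have $u_1\wedge u_2=-S$ globally. Hence, unwinding \eqref{totem1} and \eqref{totem15},
\begin{equation*}
\mu_2\wedge\mu_1=-\dbar\bigl(\mathbf{1}_{Z_2^c}\dbar S\bigr),\qquad
\mu_1\wedge\mu_2=\dbar\bigl(\mathbf{1}_{Z_1^c}\dbar S\bigr).
\end{equation*}

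Next I would examine $\dbar S$. Outside $Z_1\cup Z_2$ each $u_j$ is smooth and $\dbar$-closed (since $\dbar u_j=\mu_j$ is supported in $Z_j$), so $\dbar S=0$ there. Hence $\supp\dbar S\subset Z_1\cup Z_2$, i.e.\ $\mathbf{1}_{(Z_1\cup Z_2)^c}\dbar S=0$. Combining the two displayed identities yields
\begin{equation*}
\mu_2\wedge\mu_1-\mu_1\wedge\mu_2=-\dbar\bigl((\mathbf{1}_{Z_2^c}+\mathbf{1}_{Z_1^c})\dbar S\bigr).
\end{equation*}
The indicator-function identity $\mathbf{1}_{Z_2^c}+\mathbf{1}_{Z_1^c}=\mathbf{1}_{(Z_1\cap Z_2)^c}+\mathbf{1}_{(Z_1\cup Z_2)^c}$ (which holds for pseudomeromorphic currents by \eqref{rest1} applied on a suitable resolution) together with the support observation gives
\begin{equation*}
(\mathbf{1}_{Z_2^c}+\mathbf{1}_{Z_1^c})\dbar S=\mathbf{1}_{(Z_1\cap Z_2)^c}\dbar S=\dbar S-\mathbf{1}_{Z_1\cap Z_2}\dbar S,
\end{equation*}
and since $\dbar^2S=0$ we obtain
\begin{equation*}
\mu_2\wedge\mu_1-\mu_1\wedge\mu_2=\dbar\bigl(\mathbf{1}_{Z_1\cap Z_2}\dbar S\bigr).
\end{equation*}

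Now the dimension principle finishes the commutativity: $\mathbf{1}_{Z_1\cap Z_2}\dbar S$ is pseudomeromorphic of bidegree $(\kappa_1+\kappa_2,\kappa_1+\kappa_2-1)$ with support in $Z_1\cap Z_2$, whose codimension is at least $\kappa_1+\kappa_2$ by the proper intersection hypothesis \eqref{barbados}; the codimension therefore strictly exceeds the antiholomorphic bidegree, and $\mathbf{1}_{Z_1\cap Z_2}\dbar S=0$. Thus \eqref{snorre} holds.

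For independence of the potentials, I would invoke the observation, already noted after \eqref{totem1}, that $u_2\wedge\mu_1$ (and hence $\mu_2\wedge\mu_1=\dbar(u_2\wedge\mu_1)$) is characterized intrinsically by its restriction to $Z_2^c$ together with the normalization $\mathbf{1}_{Z_2}(\cdot)=0$; in particular, $\mu_2\wedge\mu_1$ defined via $u_1,u_2$ depends only on $u_2$. By the commutativity just proved it equals $\mu_1\wedge\mu_2$, which by the same reasoning depends only on $u_1$. Varying the two potentials independently shows that the product depends on neither. The main obstacle in the above scheme is keeping the support/bidegree bookkeeping clean enough to apply the dimension principle to $\mathbf{1}_{Z_1\cap Z_2}\dbar S$; once one observes that $\dbar S$ is already supported in $Z_1\cup Z_2$, the inclusion-exclusion reduction is routine.
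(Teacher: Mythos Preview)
Your proof is correct and follows essentially the same approach as the paper's: both use the anti-commutativity of $u_2\wedge u_1$, the vanishing of $\dbar(u_2\wedge u_1)$ outside $Z_1\cup Z_2$, the indicator identity \eqref{nonsens} (which you rewrite in the equivalent form $\1_{Z_1^c}+\1_{Z_2^c}=\1_{(Z_1\cap Z_2)^c}+\1_{(Z_1\cup Z_2)^c}$), and the dimension principle to kill $\1_{Z_1\cap Z_2}\dbar(u_2\wedge u_1)$. The only cosmetic difference is that the paper first establishes the intermediate identity $\dbar(u_2\wedge u_1)=-u_2\wedge\mu_1+u_1\wedge\mu_2$ and then applies $\dbar$, whereas you compute the difference $\mu_2\wedge\mu_1-\mu_1\wedge\mu_2$ directly; the independence argument is identical.
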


We have already noticed that $\mu_2\w \mu_1$ has support on $Z_2\cap Z_1$. 

\begin{proof}
We claim that
\begin{equation}\label{snorre1}
\dbar (u_2\w u_1)=  -u_2\w \mu_1+u_1\w \mu_2.
\end{equation}
In fact,
$\dbar(u_2\w u_1)=0$ in the Zariski open set $Z_1^c\cap Z_2^c$, and so  
$$
\1_{Z_1^c\cap Z_2^c}\dbar(u_2\w u_1)=0.
$$
Recall that $\kappa_j=\codim Z_j$.
The current 
$
T:=\1_{Z_1\cap Z_2}\dbar(u_2\w u_1)
$
has bidegree $(*,\kappa_2+\kappa_1-1)$, and 
by the assumption of proper intersection it has support on a set of
codimension $\ge \kappa_2+\kappa_1$. Thus  $T$ vanishes by the dimension principle.
In view of \eqref{totem1} and the equality
\begin{equation}\label{nonsens}
\1= \1_{Z_1^c}+\1_{Z_2^c} - \1_{Z_1^c\cap Z_2^c}+\1_{Z_1\cap Z_2}
\end{equation}
now \eqref{snorre1} follows.
If we apply $\dbar$ to \eqref{snorre1} we get \eqref{snorre}, cf.~\eqref{totem15}.
 
From the very definition of  $\mu_2\w\mu_1$ it is clear that it does not depend on the potential
$u_1$. In view of \eqref{snorre} it does not depend on $u_2$. 
 \end{proof}

The product $\mu_2\w\mu_1$ is $\mathbb{Q}$-bilinear in the following sense. If $\mu_2=a\mu'_2+b\mu''_2$, where 
$\mu'_2$ and $\mu''_2$ have good potentials and intersect $\mu_1$ properly and $a,b\in\mathbb{Q}$,
then 
\begin{equation}\label{blae}
a\mu'_2\w\mu_1 + b\mu''_2\w\mu_1=\mu_2\w\mu_1.
\end{equation}
Since $\mu_2\w\mu_1=\mu_1\w\mu_2$ the roles of $\mu_2$ and $\mu_1$ can be interchanged.
To see \eqref{blae}, let $u'_2$ and $u''_2$ be good potentials of $\mu'_2$ and $\mu''_2$, respectively, and let 
$\tilde u_2=au'_2+bu''_2$. Then $\dbar\tilde u_2=\mu_2$ but $\tilde u_2$ is not necessarily a good potential of $\mu_2$. 
However, the proof of Proposition~\ref{snoddas1} goes through with $u_2$ replaced by $\tilde u_2$ and $Z_2$ replaced by
$|\mu'_2|\cup |\mu''_2|$. It follows that
$$
\dbar(\tilde u_2\w\mu_1) = \mu_1\w\mu_2,
$$
from which we see that \eqref{blae} holds.

\begin{remark} 
With similar techniques as in the proof of Proposition~\ref{snoddas1} one can prove that $\mu_2\w\mu_1$ is $d$-closed.
However, unfortunately we cannot show, in general, that $\mu_2\w\mu_1$ is (the Lelong current of) a cycle.
\end{remark}

\begin{ex}\label{goodex}
Assume that $f=(f_1,\ldots,f_{\kappa_2})$ is a regular sequence at $x\in Y$ and let $\mu_2=M_{\kappa_2}^f$, cf.\ Example~\ref{fredag}.
In view of Section~\ref{oponPM}, $m_{\kappa_2}^f$ is a good potential of $\mu_2$. If $\mu_1$ is a cycle which has a local good potential, then
\begin{equation}\label{krak}
\mu_2\w\mu_1=M_{\kappa_2}^f\w\mu_1,
\end{equation}
where the right-hand side is defined in Section~\ref{oponPM}. To see this, recall that $m_{\kappa_2}^f\w\mu_1$, 
defined as in Section~\ref{oponPM}, is the unique \pmm extension to $Y$ of the natural product of $m_{\kappa_2}^f$ and $\mu_1$ in $Z_2^c$
such that $\1_{Z_2}m_{\kappa_2}^f\w\mu_1=0$. Therefore $m_{\kappa_2}^f\w\mu_1$ is the product of the good potential $m_{\kappa_2}^f$ and $\mu_1$.
Now \eqref{krak} follows by \eqref{totem15} and \eqref{re3}.
\end{ex}

For degree reasons, cycles in $Y$ of codimension $0$ cannot have good potentials. Such a cycle is a linear 
combination of the irreducible components of $Y$. If $\mu_1=aY$, $a\in\mathbb{Q}$, and $\mu_2$ is a cycle with a good potential $u_2$,
then we define $u_2\wedge \mu_1:=au_2$ and $\mu_2\wedge\mu_1:=a\mu_2$, which is consistent with \eqref{totem15}. If $\mu_1$ is any other 
cycle of codimension $0$, then we do not give any meaning to $\mu_2\w\mu_1$.

\begin{remark}
One can extend the approach in this section to more than two factors. If $u_1,\ldots,u_r$ are good potentials of cycles
$\mu_1,\ldots,\mu_r$ with $|\mu_j|=Z_j$, then $u_r\w\cdots\w u_1$, a~priori defined in $\cap_jZ_j^c$, has a unique pseudomeromorphic
extension to $Y$ whose restriction to $\cup_jZ_j$ vanishes. One can then recursively define
\begin{equation*}
u_r\w\cdots\w u_{\ell+1}\w\mu_{\ell}\w\cdots\w\mu_1:=\pm \1_{\cap_{\ell+1}^r Z_j^c} \dbar(u_r\w\cdots\w u_{\ell}\w\mu_{\ell-1}\w\cdots\w\mu_1)
\end{equation*}
for $\ell=1,2,\ldots$. It turns out that if all subsets of $\{Z_1,\ldots,Z_r\}$ intersect properly in the sense of Definition~\ref{properdef},
then $\mu_r\w\cdots\w\mu_1$ is independent of the ordering of the factors and choice of good potentials, has support in $\cap_1^rZ_j$,
and is $d$-closed.
Since this extension is not needed in this paper we omit the details. 
\end{remark}

\section{Proper intersections when $Y$ is smooth}\label{glatt}
We shall now use the approach in the previous section in the case when $Y$ is smooth, and see that we get
back the classical proper intersection product.

 \begin{lma}\label{apa}
If $Y$ is smooth, then any cycle $\mu$ in $Y$ admits, locally, a good $\dbar$-potential. 
\end{lma}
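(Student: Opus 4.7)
The plan is to reduce to a single irreducible $\mu=[Z]$ of codimension $\kappa\ge 1$ and then construct $u$ locally by a Bochner--Martinelli type convolution, correcting the error by a smooth $\dbar$-primitive.

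By $\Q$-linearity and splitting $\mu$ into its pure-codimension parts, it suffices to produce a local good potential for $\mu=[Z]$ where $Z\subset Y$ is irreducible of codimension $\kappa\ge 1$. Fix $p\in Y$; since $Y$ is smooth, a neighborhood of $p$ is identified with an open set $\mathcal{U}\subset\C^n$. Choose a cutoff $\chi\in C_c^\infty(\mathcal{U})$ with $\chi\equiv 1$ on a smaller neighborhood $W$ of $p$. Applying the Bochner--Martinelli formula \eqref{extra2} of Example~\ref{BMex} to the compactly supported pseudomeromorphic current $\chi[Z]$, and using $\dbar[Z]=0$, yields
\begin{equation*}
\chi[Z]=\dbar u_0+w,
\end{equation*}
where $u_0:=(p_1)_*\bigl(m_n^\eta\w (1\otimes\chi[Z])\bigr)$ and $w:=(p_1)_*\bigl(m_n^\eta\w(1\otimes(\dbar\chi\w[Z]))\bigr)$.

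I then check three properties of $u_0$. It is pseudomeromorphic because $m_n^\eta$ is almost semi-meromorphic (Lemma~\ref{benin}), $1\otimes\chi[Z]$ is pseudomeromorphic, the ASM$\wedge$PM product lies in $\PM$ (Lemma~\ref{ASMPM}), and proper pushforward preserves $\PM$; here $p_1$ is proper on the support since $\mathrm{supp}\,\chi\cap Z$ is compact. A routine degree count $(n,n-1)+(\kappa,\kappa)-(n,n)=(\kappa,\kappa-1)$ gives the correct bidegree. For smoothness off $Z$: when $\zeta\notin Z$ the diagonal $z=\zeta$ misses $\mathrm{supp}\,\chi\cap Z$, so the Bochner--Martinelli kernel $m_n^\eta(\zeta-z)$ is smooth on a neighborhood of the compact fiber over $\zeta$, and hence $u_0$ is smooth near $\zeta$.

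It remains to absorb the error term $w$. On $W$, the identity $\chi\equiv 1$ gives $\dbar w=\dbar(\chi[Z])=\dbar\chi\w[Z]=0$, and for any $\zeta\in W$ the set $\mathrm{supp}\,\dbar\chi$ is bounded away from $\zeta$, so the integrand defining $w$ is smooth and $w\in\E(W)$. The classical Dolbeault lemma on a polydisc then yields a smooth $(\kappa,\kappa-1)$-form $s$ on a slightly smaller polydisc $W'\Subset W$ with $\dbar s=w$. Setting $u:=u_0+s$ on $W'$ gives a pseudomeromorphic current of bidegree $(\kappa,\kappa-1)$, smooth off $Z$, with $\dbar u=[Z]$, i.e.\ a good $\dbar$-potential. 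The step I expect to require the most care is the smoothness of $u_0$ off $Z$: it relies on the smoothness of $m_n^\eta$ off the diagonal together with a careful support argument under the projection $p_1$.
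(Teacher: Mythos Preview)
Your proof is correct and follows essentially the same route as the paper's: apply the Bochner--Martinelli formula \eqref{extra2} to a cutoff of the Lelong current, observe that the resulting $u_0$ is pseudomeromorphic (since $p_1$ is a simple projection) and smooth off $|\mu|$ (since it is a convolution with the locally integrable Bochner--Martinelli kernel), and absorb the smooth $\dbar$-closed error by a local $\dbar$-primitive. The only cosmetic differences are that the paper works directly with $\mu$ rather than reducing to an irreducible component, and solves $\dbar u''=w$ on a pseudoconvex $\U$ rather than invoking the Dolbeault lemma on a polydisc; your initial reduction ``by $\Q$-linearity'' is harmless but unnecessary, since the construction already applies to any pure-codimension cycle.
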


\begin{proof}
It is sufficient to show the lemma when $Y$ is an open subset of $\C^n$. Let
$\U\Subset Y$ be pseudoconvex and let $\rho$ be a smooth cutoff function in $Y$ that is $1$ in 
$\U$. Let $m_n^\eta$ be the Bochner-Martinelli kernel in $\C^n\times\C^n$, see Example~\ref{BMex}.
By \eqref{extra2} applied to $\rho \mu$ we get that
\begin{equation}\label{apfel}
\mu= \dbar u' + (p_1)_*\big(m_n^\eta \wedge(1\otimes\dbar\rho\w \mu)\big)
\end{equation}
in $\U$, where $u'=(p_1)_*(m_n^\eta \wedge(1\otimes\rho\mu))$. Since $p_1$ is a simple projection, $u'$ is pseudomeromorphic.
Notice that $u'$ and the last term in \eqref{apfel} are the convolutions of the Bochner-Martinelli form in $\C^n$ by
$\rho\mu$ and $\dbar\rho\w\mu$, respectively. Hence they are smooth where $\rho\mu$ and 
$\dbar\rho\w\mu$, respectively, are smooth. In $\U$ thus $u'$ is smooth outside $|\mu|$ and 
the last term in \eqref{apfel} is smooth. Moreover, the last term in \eqref{apfel} is clearly $\dbar$-closed in $\U$
and so it is $\dbar u''$ for some smooth form $u''$ there. We conclude that $u'+u''$ is a good potential of $\mu$ in 
$\U$.
\end{proof}

We can thus use the approach in Section~\ref{frame} and obtain a 
commutative current product 
$\mu_2\w \mu_1$ for any cycles $\mu_1,\mu_2$ that intersect properly, and it has support on $|\mu_2|\cap|\mu_1|$.

\begin{prop}\label{glas}
If  $\mu_1$ and $\mu_2$ are any cycles in $Y$ that intersect properly, with supports $Z_1$ and $Z_2$,
respectively, then $\mu_2\w\mu_1$ is (the Lelong current of) a cycle
$\mu_2\cdot \mu_1$ with support on $Z_1\cap Z_2$. Moreover,
if $i\colon Y\to Y\times Y$ is the diagonal embedding and $\Delta=i_*Y$, then
\begin{equation}\label{diaformel}
i_*(\mu_2\w\mu_1)=i_*(\mu_2\cdot\mu_1)=[\Delta]\cdot (\mu_2\otimes \mu_1). 
\end{equation}
\end{prop}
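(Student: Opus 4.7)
My strategy is to reduce everything to the diagonal $\Delta\subset Y\times Y$, which is cut out by the regular sequence $\eta=\zeta-z$ and for which $[\Delta]=M^\eta_n$ by Example~\ref{fredag}. Working locally near a point of $Z_1\cap Z_2$, I pick good potentials $u_1,u_2$ of $\mu_1,\mu_2$ guaranteed by Lemma~\ref{apa} and form the external tensor product $u_2\otimes\mu_1$ on $Y\times Y$. The idea is then to (a) establish the diagonal identity
\[
M^\eta_n\wedge(u_2\otimes\mu_1)=i_*(u_2\wedge\mu_1),
\]
(b) apply $\dbar$ to obtain $M^\eta_n\wedge(\mu_2\otimes\mu_1)=i_*(\mu_2\wedge\mu_1)$, and (c) identify the left-hand side with the Lelong current of the classical refined intersection $\Delta\cdot(\mu_2\otimes\mu_1)$.

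For step (a), on the Zariski-open set $Z_2^c\times Y$ the form $u_2$ is smooth, so the left-hand side reduces to the classical product $[\Delta]\wedge(u_2\otimes\mu_1)=i_*i^*(u_2\otimes\mu_1)=i_*(u_2\wedge\mu_1)$ (using $p_1\circ i=\mathrm{id}$). Both sides are pseudomeromorphic of bidegree $(n+\kappa_1+\kappa_2,n+\kappa_1+\kappa_2-1)$ with support in $i(Z_1)$, so their difference is supported in $i(Z_1\cap Z_2)$; by the proper intersection assumption this set has codimension $n+\kappa_1+\kappa_2$, and the dimension principle forces the difference to vanish. Step (b) is then immediate from \eqref{re2}, the computation $\dbar(u_2\otimes\mu_1)=\mu_2\otimes\mu_1$ (using $\dbar u_2=\mu_2$ and $\dbar\mu_1=0$), and the fact that $\dbar$ commutes with the pushforward $i_*$.

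Step (c) is the main obstacle. The dimension principle already constrains $M^\eta_n\wedge(\mu_2\otimes\mu_1)$ to sit on the correct codimension-$(n+\kappa_1+\kappa_2)$ set, but pinning down the positive rational multiplicities requires matching the analytic residue with a multiplicity from classical intersection theory. I would invoke, or verify via iterated Poincar\'e--Lelong along the regular sequence $\eta_1,\ldots,\eta_n$ (with Lemma~\ref{gurka} ensuring that the intermediate intersections remain proper), the residue-theoretic principle that $M^\sigma_\kappa\wedge[W]=[Z(\sigma)\cdot W]$ whenever $\sigma$ is a regular sequence whose zero locus meets $W$ properly, in the spirit of \cite{aswy}. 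Once step (c) is in place, both assertions of the proposition follow at once: $\mu_2\wedge\mu_1$ is the Lelong current of a cycle $\mu_2\cdot\mu_1$ with support in $Z_1\cap Z_2$, and the displayed identity $i_*(\mu_2\cdot\mu_1)=[\Delta]\cdot(\mu_2\otimes\mu_1)$ holds.
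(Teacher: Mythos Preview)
Your proposal is correct and follows essentially the same route as the paper: establish the diagonal identity for $M_n^\eta\wedge(u_2\otimes\mu_1)$ via smoothness on $Z_2^c\times Y$ plus the dimension principle on $i(Z_1\cap Z_2)$, apply $\dbar$ using \eqref{re2}, and then identify $M_n^\eta\wedge(\mu_2\otimes\mu_1)$ as a cycle. The paper phrases step~(a) after applying $p_*$ (obtaining \eqref{pam0}) rather than as an $i_*$-equality, and for step~(c) it uses \eqref{morr} to write $M_n^\eta\wedge(\mu_2\otimes\mu_1)=\iota_*M_n^{\iota^*\eta}$ with $\iota\colon\mu_2\times\mu_1\hookrightarrow Y\times Y$ and then invokes King's formula from \cite{aswy}---this is exactly the ``residue-theoretic principle'' you point to, so your step~(c) is not really an obstacle. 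One small slip: writing $i_*i^*(u_2\otimes\mu_1)$ is not literally correct since $\mu_1$ is not smooth; the honest computation is $(u_2\otimes 1)\wedge M_n^\eta\wedge(1\otimes\mu_1)=(u_2\otimes 1)\wedge i_*\mu_1=i_*(u_2\wedge\mu_1)$ via \eqref{prog1}, \eqref{extra}, and \eqref{bara0}.
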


\begin{proof}
The proposition is local so we may assume that $Y$ is an open subset of $\C^n$.
In view of \eqref{blae} and Lemma~\ref{apa} we can also assume that $\mu_j=|\mu_j|=Z_j$.
Let $u_2$ be a good potential of $\mu_2$. 
We first claim that 
\begin{equation}\label{pam0}
u_2\w \mu_1=p_*\big(M_n^\eta \w (u_2\otimes \mu_1)\big),
\end{equation}
where $p\colon Y\times Y\to Y$ is the projection on the first factor.
Notice that $u_2\otimes 1$ is smooth in 
$Z_2^c\times Y$.   Therefore, by \eqref{prog1}, \eqref{extra}, and \eqref{bara0}, in $Z_2^c\times Y$ we have
\begin{equation*}
M_n^\eta \w (u_2\otimes \mu_1) = (u_2\otimes 1)\wedge M_n^\eta\wedge (1\otimes \mu_1)=
(u_2\otimes 1)\wedge i_*\mu_1 = i_*(u_2\wedge \mu_1).
\end{equation*}
Since $p\circ i=\text{id}_Y$, \eqref{pam0} holds in $Z_2^c$.
Now $M_n^\eta \w (u_2\otimes \mu_1)$ has support in $\Delta\cap (Y\times Z_1)$. Thus,
its restriction to $Z_2\times Y$ has support in $\Delta\cap (Z_2\times Z_1)$ and hence it vanishes 
in view of  the dimension principle. By \eqref{bara1} thus,
\begin{equation*}
\1_{Z_2}p_*\big(M_n^\eta \w (u_2\otimes \mu_1)\big) =
p_*\big(\1_{Z_2\times Y}M_n^\eta \w (u_2\otimes \mu_1\big)=0.
\end{equation*}
Since also $\1_{Z_2}u_2\w \mu_1=0$, cf.\ \eqref{totem1}, it follows that \eqref{pam0} holds in $Y$. 

Applying $\dbar$ to \eqref{pam0} and using \eqref{re2} we get
\begin{equation}\label{pam10}
\mu_2\wedge\mu_1=p_*\big(M_n^\eta \w (\mu_2\otimes \mu_1)\big).
\end{equation}
To see that this is a cycle, notice that if $\iota\colon \mu_2\times\mu_1\to Y\times Y$ is the inclusion, then
by \eqref{morr},
$
M_n^\eta \w (\mu_2\otimes \mu_1) = \iota_*M_n^{\iota^*\eta}.
$
By King's formula, cf.\ \cite{aswy}, it follows that $M_n^{\iota^*\eta}$, 
and hence  
$M_n^\eta \w (\mu_2\otimes \mu_1)$, are cycles.  
Since $M_n^\eta \w (\mu_2\otimes \mu_1)$ has support in $\Delta\simeq Y$ 
there is a cycle $\mu$ in $Y$ such that 
\begin{equation}\label{tutti}
i_*\mu=M_n^\eta \w (\mu_2\otimes \mu_1).
\end{equation}
Thus, by \eqref{pam10},
$\mu_2\wedge\mu_1=p_*i_*\mu=\mu$
is a cycle. 

For the last statement, notice that $m_n^\eta$ is a good potential
of $[\Delta]$ in view of Example~\ref{goodex}. Thus, cf.\ Example~\ref{goodex},
\begin{equation}\label{arm}
M_n^\eta \w (\mu_2\otimes \mu_1)=[\Delta]\w(\mu_2\otimes\mu_1).
\end{equation}
Since this is a cycle we write the right-hand side as $[\Delta]\cdot(\mu_2\otimes\mu_1)$.
Now \eqref{diaformel} follows from \eqref{pam10}, \eqref{tutti}, and \eqref{arm}.
 \end{proof}

\begin{remark}
If $Y$ is singular, then the diagonal $\Delta\subset Y\times Y$ is not a regular embedding, i.e., defined by a locally complete intersection.
The proof of Proposition~\ref{glas} then breaks down because if $\eta$ (locally) defines $\Delta$, then 
$m_n^\eta$ is not $\dbar$-closed outside $\Delta$ and thus not a good potential of $\Delta$; cf.\ Example~\ref{fruktsallad} below.
\end{remark}

\subsection{Comparison to Chirka's approach}\label{Chirkaprod}
In \cite{Ch} the product 
of two properly intersecting cycles $\mu_1$ and $\mu_2$ can be obtained as follows,
see the theorem in \cite[Ch.\ 3, \S 16.2, p.\ 212]{Ch}. We denote it here by $\mu_2\curlywedge\mu_1$ to distinguish it from our product. 
If $\mu_2^{\epsilon}$ is a 
regularization of $\mu_2$ obtained by any standard approximate identity, then
\begin{equation}\label{chirkadef}
\lim_{\e\to 0}\mu_2^\epsilon\wedge \mu_1 =\mu_2\curlywedge \mu_1.
\end{equation}
\begin{prop}\label{chirkaprop}
If $\mu_1$ and $\mu_2$ intersect properly in the manifold $Y$, then $\mu_2\curlywedge\mu_1 = \mu_2\wedge\mu_1$.
\end{prop}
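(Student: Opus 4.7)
My plan is to exhibit a specific standard regularization $\mu_2^\e$ of $\mu_2$ for which the limit $\lim_\e\mu_2^\e\w\mu_1$ can be identified explicitly with $\mu_2\w\mu_1$ via the machinery of Section~\ref{frame}, and then to invoke Chirka's theorem to conclude. Since Chirka asserts that the limit in \eqref{chirkadef} is independent of the standard approximate identity and equals $\mu_2\curlywedge\mu_1$, matching the limit on a single such regularization forces the two products to coincide.

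The natural candidate comes from the diagonal formula in Proposition~\ref{glas}:
\begin{equation*}
\mu_2\w\mu_1 \;=\; p_*\bigl(M_n^\eta\w(\mu_2\otimes\mu_1)\bigr),
\end{equation*}
where $p\colon Y\times Y\to Y$ is projection onto the first factor and $\eta=\zeta-z$. First I would fix a cutoff $\chi$ as in Section~\ref{oponPM}, set $\chi_\e=\chi(|\eta|^2/\e)$, and define the smooth kernel $K_\e:=\dbar\chi_\e\w m_n^\eta$. By \eqref{Mdef} we have $K_\e\to M_n^\eta=[\Delta]$. Setting $\mu_2^\e:=p_*(K_\e\w(1\otimes\mu_2))$ then yields a smooth form---a convolution of $\mu_2$ against the smooth kernel $K_\e(\zeta-z)$---and the Bochner--Martinelli identity of Example~\ref{BMex} shows $\mu_2^\e\to\mu_2$, so $\mu_2^\e$ is a bona fide smooth regularization of $\mu_2$.

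Next I would compute the limit of $\mu_2^\e\w\mu_1$ directly. By the projection formula applied to the smooth form $\mu_2^\e$ wedged against $\mu_1$,
\begin{equation*}
\mu_2^\e\w\mu_1 \;=\; p_*\bigl(K_\e\w(\mu_1\otimes\mu_2)\bigr).
\end{equation*}
Applying \eqref{Mdef} to the pseudomeromorphic current $\mu_1\otimes\mu_2$ gives $K_\e\w(\mu_1\otimes\mu_2)\to M_n^\eta\w(\mu_1\otimes\mu_2)$; pushing forward by $p$, and using Proposition~\ref{glas} together with the commutativity $\mu_2\w\mu_1=\mu_1\w\mu_2$, yields $\mu_2^\e\w\mu_1\to\mu_2\w\mu_1$, as required.

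The hard part will be to confirm that $\mu_2^\e$ qualifies as a \emph{standard} approximate identity in the sense of \cite[Ch.~3, \S~16.2]{Ch}: the kernel $K_\e$ is smooth and supported near $\eta=0$ and approximates $[\Delta]$, but it is not a positive scalar bump. If Chirka's framework is too restrictive to accommodate $K_\e$ directly, a supplementary step is needed, comparing the $K_\e$-regularization with a genuine positive convolution regularization $\tilde\mu_2^\e=\rho_\e*\mu_2$ and showing that $(\mu_2^\e-\tilde\mu_2^\e)\w\mu_1\to 0$ under the proper intersection assumption. This bridging argument rests on the pseudomeromorphic framework of Section~\ref{prel} and the fact that the difference $\mu_2^\e-\tilde\mu_2^\e$, being weakly null in the limit, has vanishing contribution when wedged against $\mu_1$ thanks to \eqref{codimolikhet} being an equality.
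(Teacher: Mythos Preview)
Your approach is essentially the paper's: both build a regularization of $\mu_2$ from the Bochner--Martinelli kernel, identify the limit of $\mu_2^\e\wedge\mu_1$ with $\mu_2\wedge\mu_1$, and then invoke Chirka's theorem. Your computation of the limit is in fact a little more direct than the paper's---you go straight through the diagonal formula \eqref{pam10} together with \eqref{Mdef}, whereas the paper rewrites $\mu_2^\e$ as $\dbar$ of an approximate good potential and shows that this potential, wedged with $\mu_1$, converges to $u_2\wedge\mu_1$ in the sense of Section~\ref{frame}. Either route works once one inserts a cutoff $\rho$ (which you omit) to make the pushforwards along the non-proper projection $p$ well defined.

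The gap is in what you call the ``hard part''. Your proposed bridging argument---comparing $K_\e$ to a genuine positive mollifier $\tilde\mu_2^\e=\rho_\e*\mu_2$ and arguing that the difference contributes nothing---is circular: the assertion that $(\mu_2^\e-\tilde\mu_2^\e)\wedge\mu_1\to 0$ is exactly the independence-of-regularization statement contained in Chirka's theorem, so you cannot invoke it before having matched at least one admissible regularization. The actual fix is much simpler and is what the paper does: take $\chi$ monotone, i.e.\ $\chi'\geq 0$. Then $\phi(\zeta)=\dbar\chi(|\zeta|^2)\wedge m_n^\zeta$ is a \emph{positive} $(n,n)$-form (a short computation with the explicit Bochner--Martinelli form), and $\phi_\e(\zeta)=\phi(\zeta/\sqrt\e)$ is therefore an honest approximate identity in Chirka's sense. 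With that choice your worry evaporates and no bridging is needed.
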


\begin{proof}
The statement can be checked locally so we can assume that $Y$ is an open subset of $\C^n$. 
Let $\phi(\zeta)=\dbar\chi(|\zeta|^2)\wedge m_n^\zeta$, where $\chi'\geq 0$, and let $\chi^\epsilon(\zeta)=\chi(|\zeta|^2/\epsilon)$. Then 
$\phi(\zeta)$ is a positive $(n,n)$-form and
\begin{equation}\label{prome}
\phi_\e(\zeta):=\phi(\zeta/\sqrt{\e})=\dbar\chi^\e(\zeta)\wedge m_n^\zeta
\end{equation}
is an approximate identity (considered as an $(n,n)$-form) in $\C^n$. If $p\colon Y\times Y\to Y$ is the projection on the second factor and
$\eta=\zeta-z$, then 
\begin{equation}\label{nad}
\mu_2^\e := p_*(\phi_\e(\eta)\wedge (\mu_2\otimes 1))
\end{equation}
is the convolution of $\mu_2$ and $\phi_\e$. By \eqref{chirkadef} thus $\lim_{\epsilon\to 0}\mu_2^\epsilon\wedge \mu_1= \mu_2\curlywedge \mu_1$.
The proposition now follows from the next lemma.
\end{proof}

\begin{lma}
We have $\lim_{\e\to 0}\mu_2^\epsilon\wedge \mu_1 = \mu_2\wedge \mu_1$.
\end{lma}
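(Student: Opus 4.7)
The plan is to manipulate $\mu_2^\epsilon \wedge \mu_1$ using the projection formula so that it becomes a pushforward of something to which the limit definition \eqref{Mdef} of $M_n^\eta$ applies, and then match the result with formula \eqref{pam10} from the proof of Proposition~\ref{glas}. Since the statement is local, we may assume throughout that everything has compact support, so that pushforward by the simple projection $p$ is well-defined and continuous on currents.

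First I would apply the projection formula \eqref{bara0} to the smooth form $\mu_2^\epsilon$ acting on $\mu_1$: since $p\colon Y\times Y\to Y$ is a submersion, the pullback $p^*\mu_1 = 1\otimes \mu_1$ makes sense, and
\[
\mu_2^\epsilon \wedge \mu_1 \;=\; p_*\bigl(\phi_\epsilon(\eta)\wedge(\mu_2\otimes 1)\bigr)\wedge \mu_1 \;=\; p_*\bigl(\phi_\epsilon(\eta)\wedge(\mu_2\otimes\mu_1)\bigr).
\]
Here $\mu_2\otimes\mu_1$ is pseudomeromorphic, being a tensor product of pseudomeromorphic currents (cf.\ Example~\ref{fredag}).

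Next I would invoke the limit definition \eqref{Mdef} of $M_n^\sigma$ with $\sigma=\eta$ and $\tau=\mu_2\otimes\mu_1$. Recalling from \eqref{prome} that $\phi_\epsilon(\eta)=\dbar\chi^\epsilon(\eta)\wedge m_n^\eta$, the second formula of \eqref{Mdef} yields
\[
\lim_{\epsilon\to 0}\,\phi_\epsilon(\eta)\wedge(\mu_2\otimes\mu_1)\;=\;M_n^\eta\wedge(\mu_2\otimes\mu_1)
\]
as currents. Combined with the weak continuity of the pushforward by the simple (and, after localization, proper) projection $p$, this gives
\[
\lim_{\epsilon\to 0}\mu_2^\epsilon\wedge\mu_1 \;=\; p_*\bigl(M_n^\eta\wedge(\mu_2\otimes\mu_1)\bigr).
\]

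Finally, I would identify this limit with $\mu_2\wedge\mu_1$. By \eqref{tutti} in the proof of Proposition~\ref{glas} we know $M_n^\eta\wedge(\mu_2\otimes\mu_1)=i_*\mu$ for a cycle $\mu$ on $Y$, hence is supported on the diagonal $\Delta$. Both projections $p_1$ and $p_2$ satisfy $p_j\circ i=\mathrm{id}_Y$, so it does not matter that $p$ in the regularization is the second-factor projection whereas \eqref{pam10} uses the first-factor one: $p_* i_*\mu=\mu=\mu_2\wedge\mu_1$.

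The step I expect to be the main technical obstacle is the interchange of limit and pushforward in the middle step, since $p$ is not proper on $Y\times Y$. This is resolved by working in a relatively compact neighborhood and cutting off $\mu_2$ (exactly as in the proof of Lemma~\ref{apa}); the supports of $\phi_\epsilon(\eta)\wedge(\mu_2\otimes\mu_1)$ then shrink uniformly toward $\Delta\cap\mathrm{supp}(\mu_2\otimes\mu_1)$ and lie in a fixed compact set, so testing against a compactly supported form in $Y$ only picks up a bounded region in $Y\times Y$ and weak convergence transfers cleanly through $p_*$.
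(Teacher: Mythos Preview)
Your argument is correct and is a genuinely shorter route than the paper's. You recognise that $\phi_\e(\eta)\wedge(\mu_2\otimes\mu_1)=\dbar\chi^\e(\eta)\wedge m_n^\eta\wedge(\mu_2\otimes\mu_1)$ is precisely the approximating sequence in the limit formula \eqref{Mdef} for $M_n^\eta\wedge(\mu_2\otimes\mu_1)$, push forward (handling the non-properness of $p$ exactly as you describe, which is the same cutoff the paper uses), and then invoke \eqref{pam10} and \eqref{tutti} from the proof of Proposition~\ref{glas} to identify the limit with $\mu_2\wedge\mu_1$; your observation that either projection works because the current is $i_*\mu$ is clean. The paper instead writes $\mu_2^\e=\dbar(u'_\e+u'')$ by a preliminary integration by parts, where $u'+u''$ is a good potential of $\mu_2$ in the sense of Section~\ref{frame}, and then shows that $(u'_\e+u'')\wedge\mu_1\to u\wedge\mu_1$ as a potential product: it checks that $\lim u'_\e\wedge\mu_1$ is \pmm, supported in $|\mu_1|$, and agrees with $u'\wedge\mu_1$ outside $|\mu_2|$, and invokes the dimension principle (proper intersection) to conclude. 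Your approach is more economical because it applies \eqref{Mdef} in one stroke; the paper's approach, on the other hand, exhibits the convergence already at the level of the potential $u_2\wedge\mu_1$ rather than only after applying $\dbar$, which is closer to the spirit of the $\dbar$-potential framework set up in Section~\ref{frame} and makes more transparent the role of the proper-intersection hypothesis via the dimension principle.
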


\begin{proof}
This can be checked locally so let $\U\Subset Y$ be pseudoconvex and let $\rho$ be a smooth cutoff function in $Y$ that
is $1$ in a neighborhood of $\overline\U$. 
In view of \eqref{nad} and \eqref{prome}, if $\e>0$ is sufficiently small, then in $\U$ we have
\begin{eqnarray*}
\mu_2^\e &=& p_*\big(\phi_\e(\eta)\wedge (\rho\mu_2\otimes 1)\big) \\
&=&
\dbar p_*\big(\chi^\e(\eta) m_n^\eta\wedge (\rho\mu_2\otimes 1)\big) + p_*\big(\chi^\e(\eta) m_n^\eta\wedge (\dbar\rho\wedge\mu_2\otimes 1)\big).
\end{eqnarray*}
The last term on the right-hand side is independent of $\e>0$ in $\U$ if $\e$ is sufficiently small $\e$. 
As in the proof of Lemma~\ref{apa} it follows that 
$$
\mu_2^\e=\dbar(u'_\e + u''),
$$
where $u'_\e=p_*\big(\chi^\e(\eta) m_n^\eta\wedge (\rho\mu_2\otimes 1)\big)$ and $u''$ is smooth in $\U$.
By \eqref{lillamdef},
$$
\lim_{\e\to 0}\chi^\e(\eta) m_n^\eta\wedge (\rho\mu_2\otimes 1)=m_n^\eta\wedge (\rho\mu_2\otimes 1)
$$
is pseudomeromorphic, and since $p$ is a simple projection,
$$
u':= \lim_{\e\to 0}u'_\e=p_*\big(m_n^\eta\wedge (\rho\mu_2\otimes 1)\big)
$$
is pseudomeromorphic. Moreover, $u'$ is smooth outside $|\mu_2|$, and we notice that 
the convergence $u'_\e\to u'$ is locally uniform in $\U\setminus |\mu_2|$.
Thus $u:=u'+u''$ is a good potential of $\mu_2$ in $\U$.

We claim that $u\w \mu_1=\lim_{\e\to 0}(u'_\e+u'')\w \mu_1$, where the left-hand side is the product in Section~\ref{frame}.
Taking the claim for granted the lemma follows since 
$$
\mu_2^\e\w \mu_1=\dbar(u'_\e+u'')\w \mu_1=\dbar\big((u'_\e+u'')\w \mu_1\big)\to \dbar (u\w \mu_1)=\mu_2\w \mu_1.
$$
To show the claim, notice first that by standard distribution theory,
\begin{equation*}
u'_\e\wedge \mu_1 = p_*\big(\chi^\e(\eta) m_n^\eta\wedge(\rho\mu_2\otimes\mu_1)\big).
\end{equation*}
Then $\lim_{\e\to 0} u'_\e\wedge \mu_1=p_*\big(m_n^\eta\wedge(\rho\mu_2\otimes \mu_1)\big)$ is pseudomeromorphic,
has support in $|\mu_1|$, and is equal to
$u'\w \mu_1$ in $\U\setminus |\mu_2|$ since $u'_\e\to u'$ locally uniformly there. 
Hence $T:=\lim_{\e\to 0}(u'_\e+u'')\w \mu_1$ is pseudomeromorphic, has support in $|\mu_1|$, and $T=u\w \mu_1$ in $\U\setminus |\mu_2|$. 
Since $\mu_2$ and $\mu_1$ intersect properly thus the claim follows by the dimension principle; cf.\ the comment after \eqref{totem1}.
\end{proof}

\subsection{Comparison to the algebraic definition in \cite{Fult}}\label{Fultprod}
The intersection product in \cite{Fult} of two cycles $\mu_1$ and $\mu_2$, that we here denote by $\mu_2\bullet \mu_1$,
is defined by the intersection of the diagonal
$\Delta\subset Y\times Y$ and the product cycle $\mu_2\times\mu_1$. In view of Proposition~\ref{glas},
to see that the intersection product in \cite{Fult} in the case of proper intersections coincides with our product it thus suffices to see that 
if $\mu\subset Y$ is an irreducible subvariety and $A\subset Y$ is a submanifold  intersecting $\mu$ properly, then $A\bullet \mu = [A]\wedge \mu$.

Assume that $\text{codim}\, A=\kappa$ and $\text{dim}\,\mu=d$. 
In general, if $A$ and $\mu$ do not necessarily intersect properly, then $A\bullet \mu$ is a Chow class 
of dimension $d-\kappa$ in the set-theoretic intersection $A\cap \mu$.
If $A$ and $\mu$ intersect properly, then $\text{dim}\, A\cap\mu=d-\kappa$ and this Chow class is the zeroth Segre class
of the subspace $A\cap\mu$ of $\mu$, $s_0(A\cap\mu,\mu)$, which is a cycle with support $A\cap\mu$. 
If $\iota\colon \mu\to Y$ and $j\colon A\cap\mu\to\mu$
are the inclusions, then as cycles in $Y$ we have
\begin{equation*}
A\bullet \mu = \iota_*j_* s_0(A\cap\mu,\mu).
\end{equation*}

Let $f=(f_1,\ldots,f_\kappa)$ be a tuple defining $A$ (locally) in $Y$.
By Example~\ref{fredag} and \eqref{re3} then $m_\kappa^f$ is a good potential of $[A]$.
In view of \eqref{re3} and \eqref{morr} thus
\begin{equation*}
[A]\wedge\mu=\dbar(m_\kappa^f\wedge \mu) = M_\kappa^f\wedge\mu = \iota_*M_\kappa^{\iota^*f}.
\end{equation*} 
It follows from \cite[Proposition~1.5]{aeswy1} that $M_\kappa^{\iota^*f}=j_*s_0(A\cap\mu,\mu)$ and hence
$[A]\wedge\mu = A\bullet \mu$.

\section{Proper intersection of nice cycles}\label{nicecycles}

To begin with, assume that $Y$ is a complex manifold and let $i\colon Y\to Y'$ be a local embedding into a complex manifold $Y'$. 
Let $\mu_1,\ldots,\mu_r$ be germs of effective cycles in $Y$ at a point $x$ and let $\mu'_j$ be effective cycles in $Y'$ intersecting
$i_*Y$ properly and such that 
$
i_*\mu_j = \mu'_j\cdot_{Y'} i_*Y.
$
For instance one can take $\mu'_j$ as follows. Let $(x,y)$ be coordinates in $Y'$ such that $i_*Y=\{y=0\}$. 
Possibly after shrinking $Y'$ and $Y$, we have $Y'=i_*Y \times U$ for some neighborhood $U$ of $0$ in some $\mathbb{C}^d$ and we can take 
$\mu'_j=i_*\mu_j\times U$.
For any choice of such $\mu'_j$ we have that $\mu'_1,\ldots,\mu'_r,i_*Y$ intersect properly in $Y'$ if and only if 
$\mu_1,\ldots,\mu_r$ intersect properly in $Y$, cf.\ Lemma~\ref{ryydiger} below. In this case,
\begin{equation}\label{rydiger}
i_*(\mu_1\cdot_Y\cdots\cdot_Y\mu_r) = \mu'_1\cdot_{Y'}\cdots\cdot_{Y'}\mu'_r\cdot_{Y'} i_*Y.
\end{equation}
We can thus relate proper intersections in $Y$ to proper intersections in a larger smooth ambient space.
This is the idea for defining proper intersections when $Y$ is singular and motivates the definition of nice cycles.

\begin{df}\label{nicedef}
Let $Y$ be a reduced analytic space of pure dimension $n$. A germ of an effective cycle $\mu$ at  $x\in Y$ is nice
if there is a local embedding $i\colon Y\to Y'$, where $Y'$ is smooth, and an effective cycle $\mu'$ in $Y'$ 
intersecting $i_*Y$ properly,  such that
\begin{equation}\label{kopp}
i_*\mu = \mu'\cdot_{Y'} i_*Y.
\end{equation}
\end{df}

We say that $\mu'$ is a \emph{representative} of $\mu$ in $Y'$. Since the intersection $\mu'\cdot_{Y'} i_*Y$ is proper it follows that 
\begin{equation}\label{pont100}
\codim_{Y} \mu=\codim_{Y'}\mu'.
\end{equation}
Since $\mu$ and $\mu'$ are effective we have that 
\begin{equation}\label{urk}
|i_*\mu|=|\mu'|\cap i_*Y.
\end{equation}
Moreover, 
if $\mu=\mu_1+\mu_2$, where $\mu_j$ are effective, then $|\mu|=|\mu_1|\cup |\mu_2|$.  
Neither this last statement nor \eqref{urk} holds if the assumptions on effectivity are dropped, not even if $Y$ is smooth.

From the definition we see that a $\mathbb{Q}_+$-linear combination of nice cycles is nice.

\begin{ex}\label{blyerts}
The germ of $Y$ at any point $x\in Y$ is nice since there is a local embedding $i\colon Y\to Y'$ of a neighborhood of $x$ into some open
$Y'\subset\mathbb{C}^N$ and $i_*Y=Y'\cdot_{Y'}i_*Y$. 
Since $aY'$, $a\in\mathbb{Q}_+$, are the only effective cycles in $Y'$ of codimension $0$, the only nice cycles in $Y$ at $x$ of codimension $0$ are 
$aY$. In particular, if $Y$ has several irreducible components at $x$, then neither of these are nice.
\end{ex}

\begin{ex}\label{spetsigt}
Let $Y=\{z^2=w^3\}\subset\C^2$ and let $p=0$. Then 
$Y\cdot_{\C^2}\{w=0\}=2p$ and $Y\cdot_{\C^2}\{z=0\}=3p$. Considering $p$ as a cycle in $Y$ thus $(1/2)\{w=0\}$ and $(1/3)\{z=0\}$ are 
representatives of $p$ in $\C^2$.
\end{ex}

\begin{lma}\label{ryydiger}
If $\mu_1,\ldots, \mu_r$ are germs of nice cycles at $x\in Y$ of pure codimensions $\kappa_1,\ldots,\kappa_r$,
respectively, 
then 
\begin{equation}\label{codimolikhetnice}
\codim\big(|\mu_1|\cap\cdots\cap|\mu_r|\big) \leq \kappa_1+\cdots +\kappa_r.
\end{equation}
Let $\mu'_j$ be representatives of $\mu_j$ in the same $Y'$. Then equality holds in \eqref{codimolikhetnice}
if and only if $\mu'_1,\ldots,\mu'_r,i_*Y$ intersect properly.
\end{lma}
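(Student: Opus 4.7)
The plan is to reduce everything to the classical proper-intersection inequality \eqref{codimolikhet} in the smooth ambient space $Y'$, using that we can work with representatives $\mu'_j$ of all the $\mu_j$ in a common local embedding $i\colon Y\to Y'$ (which exists because we are dealing with germs at a single point $x$).

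The first step is to translate supports through $i$. Using \eqref{urk} applied to each $\mu_j$, we have $i_*|\mu_j|=|\mu'_j|\cap i_*Y$, and hence
\begin{equation*}
i_*\bigl(|\mu_1|\cap\cdots\cap|\mu_r|\bigr)=|\mu'_1|\cap\cdots\cap|\mu'_r|\cap i_*Y.
\end{equation*}
Next, I record two bookkeeping formulas: if $A$ is a germ of analytic subset of $Y$ at $x$, then $\codim_{Y'} i_*A=\codim_Y A+(\dim Y'-n)$, and $\codim_{Y'}i_*Y=\dim Y'-n$; moreover, by \eqref{pont100}, $\codim_{Y'}\mu'_j=\kappa_j$ for each $j$.

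Now the smooth-ambient-space inequality \eqref{codimolikhet} applied in $Y'$ to the $r+1$ cycles $\mu'_1,\ldots,\mu'_r,i_*Y$ gives
\begin{equation*}
\codim_{Y'}\bigl(|\mu'_1|\cap\cdots\cap|\mu'_r|\cap i_*Y\bigr)\le \kappa_1+\cdots+\kappa_r+(\dim Y'-n).
\end{equation*}
Combining with the two displayed identities, the $(\dim Y'-n)$-terms cancel and we obtain \eqref{codimolikhetnice}. For the second statement, equality in \eqref{codimolikhetnice} is equivalent, through the same bookkeeping, to equality in the above ambient inequality, which by definition means that $\mu'_1,\ldots,\mu'_r,i_*Y$ intersect properly in $Y'$.

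There is no real obstacle here; the only thing to be slightly careful about is that the codimension translation formula $\codim_{Y'}i_*A=\codim_Y A+(\dim Y'-n)$ requires $A$ to be (a germ of) an analytic subset of $Y$, which is fine since each $|\mu_j|$ is such. One should also note that the inequality \eqref{codimolikhet} used in $Y'$ is valid because $Y'$ is smooth, exactly the situation discussed in Section~\ref{kodimolikhetsektion}; no niceness hypothesis is needed for the ambient cycles individually, only that they have pure codimension, which holds by the hypotheses on the $\mu_j$ and on $i_*Y$.
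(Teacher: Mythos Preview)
Your proof is correct and follows essentially the same route as the paper: translate supports via \eqref{urk}, use the codimension bookkeeping formula $\codim_{Y'}i_*A=\codim_Y A+\codim_{Y'}i_*Y$ (this is the paper's \eqref{pont}), apply \eqref{codimolikhet} in the smooth $Y'$ to $|\mu'_1|,\ldots,|\mu'_r|,i_*Y$, and cancel. Your remark on the pure-codimension hypothesis and on the existence of common representatives is a reasonable aside; otherwise there is nothing to add.
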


\begin{proof}
Notice first that if $A\subset Y$, then
\begin{equation}\label{pont}
\codim A+\codim_{Y'} i_*Y= \codim_{Y'} i_*A. 
\end{equation}
In view of \eqref{urk}, $i_* \big(|\mu_1|\cap\cdots\cap|\mu_r|\big)= |\mu_1'|\cap\cdots\cap|\mu_r'|\cap i_*Y$. By
\eqref{codimolikhet} in $Y'$ and \eqref{pont} thus 
\begin{multline*}
\codim \big(|\mu_1|\cap\cdots\cap|\mu_r|\big)+\codim_{Y'} i_*Y
=\codim_{Y'}i_* \big(|\mu_1|\cap\cdots\cap|\mu_r|\big)=\\
\codim_{Y'}(|\mu_1'|\cap\cdots\cap|\mu_r'|\cap i_*Y)\le
\codim_{Y'}|\mu_1'|+\cdots + \codim_{Y'}|\mu_1'|+\codim_{Y'} i_*Y.
\end{multline*}
%Thus 
In view of \eqref{pont} and \eqref{pont100} both  \eqref{codimolikhetnice} and
the last statement of the lemma follow.
\end{proof}

\begin{lma}\label{snartsen}
Let $\mu$ be a germ of a nice cycle in $Y$ at $x$ and let $j\colon Y\to Y'_m$ be a minimal embedding of a neighborhood of $x$.
Then there is a representative of $\mu$ in $Y'_m$.
\end{lma}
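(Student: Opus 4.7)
The plan is to exploit the factorization through $j$ provided by the minimality of the embedding. Locally near $x$, the embedding $i\colon Y\to Y'$ in Definition~\ref{nicedef} splits as $i=\iota\circ j$ for some embedding $\iota\colon Y'_m\to Y'$, and after a suitable choice of coordinates on $Y'$ I may assume that $Y'=Y'_m\times\C^d$ with $\iota(y)=(y,0)$, where $d=\dim Y'-\dim Y'_m$. Write $\pi\colon Y'\to Y'_m$ for the projection, so that $\pi\circ\iota=\text{id}_{Y'_m}$.

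The key observation is that $i_*Y$ is itself a proper intersection in the smooth ambient space $Y'$:
$$
i_*Y=(j_*Y\times\C^d)\cdot_{Y'}\iota_*Y'_m,
$$
the two factors being transverse by a direct codimension check. Substituting into the identity $i_*\mu=\mu'\cdot_{Y'} i_*Y$, a codimension calculation (in the style of Lemma~\ref{ryydiger}) shows that the triple $\mu'$, $j_*Y\times\C^d$, $\iota_*Y'_m$ intersects properly in $Y'$. By Lemma~\ref{gurka} every pairwise sub-intersection is then proper, and by associativity of the classical proper intersection product on the smooth manifold $Y'$,
$$
i_*\mu=(\mu'\cdot_{Y'}\iota_*Y'_m)\cdot_{Y'}(j_*Y\times\C^d).
$$

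The first factor $\mu'\cdot_{Y'}\iota_*Y'_m$ is an effective cycle supported in $\iota_*Y'_m$, hence equals $\iota_*\tilde\mu$ for a unique effective cycle $\tilde\mu$ in $Y'_m$; this $\tilde\mu$ will be my representative. A codimension bookkeeping like in the proof of Lemma~\ref{ryydiger} shows that $\tilde\mu$ and $j_*Y$ intersect properly in $Y'_m$. What remains is the compatibility
$$
\iota_*\tilde\mu\cdot_{Y'}(j_*Y\times\C^d)=\iota_*\bigl(\tilde\mu\cdot_{Y'_m}j_*Y\bigr),
$$
for once this is in hand the previous display yields $\iota_*(\tilde\mu\cdot_{Y'_m}j_*Y)=i_*\mu=\iota_*j_*\mu$, and hence $\tilde\mu\cdot_{Y'_m}j_*Y=j_*\mu$, exactly saying that $\tilde\mu$ represents $\mu$ in $Y'_m$.

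I expect this last compatibility to be the main technical step. In product form it reads $(\tilde\mu\times\{0\})\cdot_{Y'_m\times\C^d}(j_*Y\times\C^d)=(\tilde\mu\cdot_{Y'_m}j_*Y)\times\{0\}$, which is an instance of the classical product formula for proper intersections on a product of smooth manifolds; alternatively it can be verified via Serre's Tor formula at a generic point of each component of the intersection, where the flatness of $\pi$ makes the higher Tors vanish and reduces the length computation on $\O_{Y',w}$ to the corresponding one on $\O_{Y'_m,w}$. A third option, in keeping with the analytic framework of this paper, is to observe that a good $\dbar$-potential of $j_*Y$ in $Y'_m$ pulls back along $\pi$ to a good potential of $j_*Y\times\C^d$ in $Y'$, and then to propagate the current identity \eqref{pam10} of Proposition~\ref{glas} through the simple projection $\pi$.
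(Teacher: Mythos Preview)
Your argument is correct and follows essentially the same route as the paper: factor $i=\iota\circ j$, split $Y'=Y'_m\times U$, show the triple $\mu',\,j_*Y\times U,\,\iota_*Y'_m$ intersects properly, set $\mu'_m:=\mu'\cdot_{Y'}\iota_*Y'_m$ (your $\tilde\mu$), and conclude via injectivity of $\iota_*$. The only difference is that what you flag as ``the main technical step''---the compatibility $\iota_*\tilde\mu\cdot_{Y'}(j_*Y\times\C^d)=\iota_*(\tilde\mu\cdot_{Y'_m}j_*Y)$---is exactly the instance of \eqref{rydiger} that the paper invokes directly; so rather than reproving it via Tor, product formulas, or pulled-back potentials, you can simply cite \eqref{rydiger} for the smooth embedding $\iota\colon Y'_m\to Y'$ with representatives $\mu'$ of $\tilde\mu$ and $j_*Y\times\C^d$ of $j_*Y$.
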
 

\begin{proof}
By definition there is an embedding $i\colon Y\to Y'$ and a representative $\mu'$ of $\mu$ in $Y'$.
Since $j$ is a minimal embedding there is an embedding $\iota\colon Y'_m\to Y'$ such that the composition
\begin{equation}\label{hastrusk}
Y\stackrel{j}{\longrightarrow} Y'_m\stackrel{\iota}{\longrightarrow} Y'
\end{equation}
is the embedding $i$.
By choosing suitable local coordinates in $Y'$, and possibly shrinking $Y'$ and $Y'_m$, we can assume that $Y'=\iota_*Y'_m\times U$
for some open $U$ in some $\C^d$. 

We claim that $\mu'$, $i_*Y\times U$, and $\iota_*Y'_m$ intersect properly in $Y'$. 
This follows since by definition $\mu'$ and $i_*Y$ intersect properly in $Y'$, and since we clearly have the proper intersection
\begin{equation}\label{tomatsallad}
(i_*Y\times U) \cdot_{Y'} \iota_*Y'_m = i_*Y.
\end{equation}
By Lemma~\ref{gurka} thus all pairs of $\mu'$, $i_*Y\times U$, $\iota_*Y'_m$ intersect properly.
Let 
$$
\mu'_m:=\mu'\cdot_{Y'} \iota_*Y'_m.
$$
Then $\mu'$ is a representative of $\mu'_m$, and in view of \eqref{tomatsallad}, $i_*Y\times U$ is a representative of $j_*Y$ in $Y'$.
By Lemma~\ref{ryydiger} thus $\mu'_m$ and $j_*Y$ intersect properly in $Y'_m$.
From \eqref{rydiger} and \eqref{tomatsallad} we now get
$$
\iota_*(\mu'_m\cdot_{Y'_m} j_*Y)=\mu'\cdot_{Y'} (i_*Y\times U)\cdot_{Y'} \iota_*Y'_m = \mu'\cdot_{Y'} i_*Y = i_*\mu=\iota_*j_*\mu.
$$
Since $\iota_*$ is injective thus $j_*\mu=\mu'_m\cdot j_*Y$, i.e., $\mu'_m$ is a representative of $\mu$ in $Y'_m$.
 \end{proof}

We can thus assume that $i\colon Y\to Y'$ is a minimal
embedding in Definition~\ref{nicedef}.  By uniqueness of minimal embeddings
it follows that if $\mu_1, \ldots, \mu_r$ are nice cycles in $Y$ at $x$, then all of them have
representatives $\mu_j'$ in the same (minimal) $Y'$.
Moreover, 
if $\mu$ is a nice cycle at $x$ and $i\colon Y\to Y'$ is any local embedding, then there is a representative $\mu'$ of $\mu$
in $Y'$. Indeed, using the notation of the above proof, if $\nu'$ is a representative of $\mu$ in $Y'_m$ we can take $\mu'=\iota_*\nu'\times U$.

We say that germs $\mu_1,\ldots, \mu_r$ of nice cycles at $x\in Y$ intersect properly
if equality holds in \eqref{codimolikhetnice}. 
It follows that if $\mu_1,\ldots, \mu_r$ intersect properly, then
each subset of them do as well, cf.~(the proof of) Lemma~\ref{gurka}.
Notice that if $i\colon Y\to Y'$ is a local embedding and $\mu'_j$ are representatives of $\mu_j$ in $Y'$, then by Lemma~\ref{ryydiger},
$\mu_j$ intersect properly
if and only if $\mu'_1,\ldots,\mu'_r, i_*Y$ intersect properly.

\begin{prop}\label{boman}
Assume that $\mu_1,\ldots, \mu_r$ are germs of nice cycles at $x\in Y$ that intersect properly.
Then there is a unique germ of a nice cycle $\mu_1\cdots\mu_r$ at $x$ such that if $i\colon Y\to Y'$ is
a local embedding and $\mu'_j$ are representatives of $\mu_j$ in $Y'$, then 
\begin{equation}\label{progsnitt100}
 i_*(\mu_1\cdots \mu_r)=\mu_1' \cdot_{Y'}\cdots \mu_r'\cdot_{Y'} i_*Y.
\end{equation}
\end{prop}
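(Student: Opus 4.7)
The plan is to construct $\mu_1\cdots\mu_r$ via a minimal embedding, and then verify \eqref{progsnitt100} for arbitrary embeddings by reducing to the minimal case in the spirit of Lemma~\ref{snartsen}.

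Fix a minimal local embedding $j\colon Y\to Y'_m$ and, by Lemma~\ref{snartsen}, pick representatives $\mu'_k$ of $\mu_k$ in $Y'_m$ for $k=1,\ldots,r$. By Lemma~\ref{ryydiger} the cycles $\mu'_1,\ldots,\mu'_r,j_*Y$ intersect properly in the smooth space $Y'_m$, and Lemma~\ref{gurka} extends this to every subfamily. Form
\[
\nu':=\mu'_1\cdot_{Y'_m}\cdots\cdot_{Y'_m}\mu'_r\cdot_{Y'_m}j_*Y.
\]
Effectivity of all factors gives $|\nu'|\subseteq j_*Y$, so $\nu'=j_*\mu$ for a unique effective cycle $\mu$ in $Y$. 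Setting $\sigma':=\mu'_1\cdot_{Y'_m}\cdots\cdot_{Y'_m}\mu'_r$, effectivity yields $|\sigma'\cdot_{Y'_m}j_*Y|=|\sigma'|\cap j_*Y$ of codimension $\kappa_1+\cdots+\kappa_r+\codim_{Y'_m}j_*Y$, so $\sigma'$ and $j_*Y$ intersect properly and $\sigma'$ is a representative of $\mu$ in $Y'_m$; hence $\mu$ is nice. Independence of $\mu$ from the choice of the $\mu'_k$ in $Y'_m$ follows from a swap-one-factor-at-a-time argument in the smooth $Y'_m$, combined with $\mu'_k\cdot_{Y'_m}j_*Y=j_*\mu_k=\mu''_k\cdot_{Y'_m}j_*Y$ for another choice $\mu''_k$; properness of the intermediate intersection at each swap follows from the same support identity together with Lemma~\ref{gurka}.

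For an arbitrary local embedding $i\colon Y\to Y'$ with representatives $\tilde\mu'_k$ of $\mu_k$ in $Y'$, factor $i=\iota\circ j$ with $\iota\colon Y'_m\to Y'$ and $Y'=\iota_*Y'_m\times U$ after shrinking, as in the proof of Lemma~\ref{snartsen}. Applying Lemma~\ref{snartsen} to each $\mu_k$ separately, $\nu'_k:=\tilde\mu'_k\cdot_{Y'}\iota_*Y'_m$ is a representative of $\mu_k$ in $Y'_m$. A codimension count shows that $\tilde\mu'_1,\ldots,\tilde\mu'_r,\,i_*Y\times U,\,\iota_*Y'_m$ intersect properly in $Y'$: using $(i_*Y\times U)\cap\iota_*Y'_m=i_*Y$, the intersection of all supports equals $|\tilde\mu'_1|\cap\cdots\cap|\tilde\mu'_r|\cap i_*Y$, whose codimension $\kappa_1+\cdots+\kappa_r+\codim_{Y'}i_*Y$ matches the sum of the individual codimensions. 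Associativity in the smooth $Y'$, together with $(i_*Y\times U)\cdot_{Y'}\iota_*Y'_m=i_*Y$ and formula \eqref{rydiger} applied to the embedding $\iota$, then gives
\[
\tilde\mu'_1\cdots\tilde\mu'_r\cdot_{Y'}i_*Y=\tilde\mu'_1\cdots\tilde\mu'_r\cdot_{Y'}(i_*Y\times U)\cdot_{Y'}\iota_*Y'_m=\iota_*(\nu'_1\cdots\nu'_r\cdot_{Y'_m}j_*Y)=\iota_*j_*\mu=i_*\mu,
\]
the penultimate equality using the independence of $\mu$ from the choice of representatives in $Y'_m$. This is \eqref{progsnitt100}. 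Uniqueness of $\mu$ is immediate from injectivity of $j_*$ applied to the case $i=j$. The main obstacle throughout is the codimension bookkeeping needed to justify each associativity and commutativity move; this is handled uniformly by combining effectivity (so that supports of proper products coincide with set-theoretic intersections of supports) with Lemma~\ref{gurka}.
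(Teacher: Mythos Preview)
Your proof is correct and follows essentially the same approach as the paper. The only organizational difference is that you anchor the definition in a minimal embedding and then verify \eqref{progsnitt100} for an arbitrary embedding, whereas the paper fixes an arbitrary embedding first and then uses the factorization through a minimal embedding to show well-definedness; the underlying ingredients (the codimension bookkeeping via Lemma~\ref{gurka}, the swap argument using $\mu'_k\cdot_{Y'} j_*Y=\mu''_k\cdot_{Y'} j_*Y$, and the reduction step via \eqref{rydiger} and \eqref{tomatsallad}) are identical.
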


\begin{proof}
The uniqueness is clear. To show existence we take \eqref{progsnitt100} for one fixed local embedding $i$
as the definition of $\mu_1\cdots \mu_r$ 
and show that it is independent of representatives $\mu'_j$ and $i$.

If we have other representatives $\mu_j''$ in $Y'$ for $\mu_j$, then 
$\mu''_j\cdot_{Y'}i_*Y=\mu'_j\cdot_{Y'}i_*Y$ and hence, by commutativity
of proper intersections in a smooth space, 
$\mu_1'' \cdot_{Y'}\cdots \mu_r''\cdot_{Y'} i_*Y=\mu_1' \cdot_{Y'}\cdots \mu_r'\cdot_{Y'} i_*Y$.

Let $j\colon Y\to Y'_m$ be a minimal embedding and factorize $i$ as in \eqref{hastrusk}. 
As in the proof of Lemma~\ref{snartsen} we get representatives $\mu'_{m,j}$ of $\mu_j$ in $Y'_m$.
With the notation in that proof we have
$\iota_*(\mu'_{m,1}\cdots\mu'_{m,r}\cdot j_*Y)=\mu'_1\cdots\mu'_r\cdot i_*Y=i_*(\mu_1\cdots\mu_r)$.
Since $i_*=\iota_* j_*$ thus $\mu'_{m,1}\cdots\mu'_{m,r}\cdot j_*Y=j_*(\mu_1\cdots\mu_r)$.
By uniqueness of minimal embeddings and the independence of representatives it follows that $\mu_1\cdots\mu_r$ 
is independent of the embedding.

Clearly, $\mu_1' \cdot_{Y'}\cdots \mu_r'$ is a representative of $\mu_1\cdots \mu_r$ in $Y'$ and so
$\mu_1\cdots \mu_r$ is nice.
\end{proof}

In view of Lemma~\ref{snartsen} and Proposition~\ref{boman} the following definition makes sense.

\begin{df}\label{snittdefny}
The proper intersection product of properly intersecting germs of nice cycles 
$\mu_1,\ldots,\mu_r$ in $Y$ at $x$ is the nice cycle $\mu_1\cdots\mu_r$ such that \eqref{progsnitt100} holds.
\end{df}

The product $\mu_1\cdots \mu_r$ is commutative since the proper intersection product in $Y'$ is. 
Moreover, $\mu_1\cdots \mu_r$ is $\mathbb{Q}_+$-linear in each factor in the sense that
if, for some $j$, $\mu_j=a\nu_1+b\nu_2$, where $a,b\in\mathbb{Q}_+$ and $\nu_1$ and $\nu_2$ are nice, then
\begin{equation}\label{skorsten}
\mu_1\cdots \mu_r = a\mu_1\cdots \nu_1\cdots \mu_r + b\mu_1\cdots \nu_2\cdots \mu_r.
\end{equation}
Notice that both $\nu_1$ and $\nu_2$ intersect $\mu_1,\ldots,\mu_{j-1},\mu_{j+1},\ldots,\mu_r$ properly
since $|\mu_j|=|\nu_1|\cup |\nu_2|$.

\begin{ex}\label{punkt}
Assume that $p$ is a point in $Y$.  By the local parametrization theorem, $Y$ is locally embedded as a branched
cover in a \nbh of $0$ in $\C^N=\C^n\times \C^{N-n}$.  It follows that $p\in Y$ is the 
proper intersection of $Y$ and $z_{1}=\cdots =z_n=0$ in $\C^N$. Hence $\{ p\}$ is
a nice cycle in $Y$. 
 \end{ex}

\begin{ex} \label{potatis}
Let $Y=\{x_1x_2+x_3x_4=0\}$ in $\C^4$.  Then the analytic sets
$\mu_1=\{x_1=x_3=0\}$ and $\mu_2=\{x_2=x_4=0\}$ both have codimension $1$ in $Y$ but
their set-theoretic intersection is just the point $0$, which has codimension $3$. In view of 
Lemma~\ref{codimolikhetnice}
not both of them, and by symmetry thus none of them, is nice.  
\end{ex}

Here is an example of an irreducible $Y$ and a nice cycle of positive codimension whose irreducible
components are not nice, cf.\ Example~\ref{blyerts}.

\begin{ex}
Let $Y$ be as in Example~\ref{potatis}; it is irreducible. The subvariety $\{ x\in Y;\ x_1=0\}$ is certainly nice
and it has the two
irreducible components $\mu_1=\{x_1=0,\ x_3=0\}$ and $\{x_1=0,\ x_4=0\}$. 
It follows from Example~\ref{potatis} that none of them is nice.
\end{ex}

\begin{prop}\label{trunk}
Assume that $i\colon Y\to Y'$ is a local embedding and $Y'$ is smooth. If $\mu'$ is a representative of 
the nice cycle $\mu$ in $Y'$
and $u'$ is a good potential of $\mu'$, then
there is a unique good potential $u$ of $\mu$ such that 
$u=i^*u'$ in $Y\setminus |\mu|$.
\end{prop}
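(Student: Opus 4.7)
The plan is to obtain $u$ on $Y$ by pulling back $u'$ via regularization, using the smooth ambient space $Y'$ and the framework of Section~\ref{frame}. Uniqueness is immediate from the dimension principle: if $u_1,u_2$ are two good potentials satisfying $u_j=i^*u'$ on $Y\setminus|\mu|$, then $u_1-u_2$ is pseudomeromorphic of bidegree $(\kappa,\kappa-1)$ with $\kappa=\codim_Y\mu$, and has support in $|\mu|$, which has codimension $\kappa$ by \eqref{pont100}; hence $u_1=u_2$.

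For existence, choose a holomorphic tuple $f$ on $Y'$ with $\{f=0\}=|\mu'|$ and set $\chi_\e:=\chi(|f|^2/\e)$ for a cutoff $\chi$ as in Section~\ref{PMsektion}. Since $\chi_\e$ vanishes near $|\mu'|$ and $u'$ is smooth off $|\mu'|$, the form $\chi_\e u'$ is smooth on $Y'$, so $u_\e:=i^*(\chi_\e u')$ is a well-defined smooth form on $Y$. Since $Y'$ is smooth, Lemma~\ref{apa} furnishes a local good potential $v$ of $[i_*Y]$ in $Y'$, and since $\mu'$ intersects $i_*Y$ properly (by definition of representative), the framework of Section~\ref{frame} applied in $Y'$ yields a pseudomeromorphic current
\[
T:=u'\wedge[i_*Y]=-\1_{|\mu'|^c}\dbar(u'\wedge v)
\]
on $Y'$, with $\1_{|\mu'|}T=0$, and by Proposition~\ref{snoddas1} together with Proposition~\ref{glas},
\[
\dbar T=[\mu']\wedge[i_*Y]=[\mu'\cdot i_*Y]=i_*[\mu].
\]
Using \eqref{bara0} with $i_*1=[i_*Y]$ we have $i_*u_\e=\chi_\e u'\wedge[i_*Y]$ on $Y'$. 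Both $\chi_\e u'\wedge[i_*Y]$ and $\chi_\e T$ vanish in a neighborhood of $|\mu'|$ (where $\chi_\e\equiv 0$) and coincide on $Y'\setminus|\mu'|$, so they agree globally; by \eqref{rest0} and $\1_{|\mu'|}T=0$ we get $\chi_\e T\to T$, hence $i_*u_\e\to T$ on $Y'$.

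Testing against any $\xi$ with $i^*\xi=0$ gives $\langle T,\xi\rangle=\lim_\e\langle u_\e,i^*\xi\rangle_Y=0$, so by the identification of currents on $Y$ with currents on $Y'$ vanishing on such $\xi$ (Section~\ref{prel}), $T=i_*u$ for a unique current $u$ on $Y$, and $u_\e\to u$ by injectivity of $i_*$. On $Y\setminus|\mu|$ we have $u_\e=\chi_\e(i^*f)\cdot i^*u'\to i^*u'$ locally uniformly (since $u'$ is smooth off $|\mu'|$ and $i^{-1}(|\mu'|)=|\mu|$), so $u=i^*u'$ is smooth there. Finally $i_*(\dbar u)=\dbar T=i_*[\mu]$ gives $\dbar u=[\mu]$ by injectivity. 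The main obstacle is the pseudomeromorphicity of the limit $u$: what the construction produces intrinsically is $T$ on $Y'$, pseudomeromorphic with support in $i_*Y$; one must verify that such a $T$ descends to an element of the intrinsic sheaf $\PM_Y$, which amounts to inspecting the representation of $T$ as iterated pushforwards of elementary pseudomeromorphic currents in the definition from Section~\ref{PMsektion} and observing that, because of the support condition, each such representation can be arranged to factor through the embedding $i$.
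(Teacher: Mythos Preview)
Your overall strategy mirrors the paper's: form $T=u'\wedge i_*Y$ on $Y'$ via the framework of Section~\ref{frame}, show $T=i_*u$ for some current $u$ on $Y$, and verify the good-potential properties. The steps establishing $i_*u_\e\to T$, the vanishing on test forms with $i^*\xi=0$, $\dbar u=\mu$, and smoothness off $|\mu|$ are all correct and essentially identical to the paper's argument.

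The gap is exactly where you flag it: pseudomeromorphicity of $u$. Your proposed fix --- inspecting the representation of $T$ as iterated pushforwards of elementary currents and ``arranging each representation to factor through $i$'' because of the support condition --- does not work. Support in $i_*Y$ alone is \emph{not} sufficient for a pseudomeromorphic current on $Y'$ to descend to one on $Y$ when $Y$ is singular; the towers $\pi^1_*\cdots\pi^m_*\nu$ in the definition of $\PM$ need not factor through $i$ merely because the resulting current is supported on $i_*Y$. The paper handles this by first proving the additional vanishing
\[
\1_{i_*Y_{sing}}(u'\wedge i_*Y)=0
\]
(a short separate argument splitting into $\1_{|\mu'|}$ and $\1_{Y'\setminus|\mu'|}$, using \eqref{rest1}, \eqref{rest2}, and $\1_{|\mu'|}T=0$), and then invoking \cite[Theorem~1.1]{litennot}, which is precisely the statement that a pseudomeromorphic current on $Y'$ supported in $i_*Y$ \emph{and} satisfying this vanishing on the singular locus is $i_*$ of a pseudomeromorphic current on $Y$. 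Without both the vanishing on $Y_{sing}$ and this external result, the descent to $\PM_Y$ is unjustified.
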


\begin{proof}
The uniqueness is clear in view of the dimension principle.
By Lemma~\ref{apa}, $i_*Y$ has a good potential in $Y'$ and from Section~\ref{frame} it follows that
$u'\w i_*Y$, a~priori defined in $Y'\setminus |\mu'|$, has a unique \pmm extension to
$Y'$ such that $\1_{|\mu'|} (u'\w i_*Y)=0$. 
If $\chi_\epsilon=\chi(|f|^2/\e)$ and $\{f=0\}=|\mu'|$, then by \eqref{smalben} we have 
\begin{equation}\label{turtur}
\chi_\epsilon u'\w i_*Y\to u'\w i_*Y.
\end{equation}

%\smallskip
Let $\xi$ be any smooth form in $Y'$ such that $i^*\xi=0$. Then 
clearly $\xi\w \chi_\epsilon u'\w i_*Y=0$ for $\e>0$. By \eqref{turtur} therefore
$\xi\w u'\w i_*Y=0$. This means that there is a (unique) current $u$ in
$Y$ such that  $i_* u=u'\w i_*Y$; cf.\ Section~\ref{prel}.  We claim that
\begin{equation}\label{turtur0}
\1_{i_*Y_{sing}} (u'\w i_*Y)=0.
\end{equation}
Taking the claim for granted for the moment we can complete the proof. Since $u'\w i_*Y$ is \pmm and in addition
\eqref{turtur0} holds, it follows
 from  \cite[Theorem~1.1]{litennot} that $u$ is \pmm in $Y$. 
 %
%\smallskip
Moreover, in view of \eqref{totem1} and Section~\ref{Chirkaprod} or Section~\ref{Fultprod},
$$
i_* \dbar u=\dbar i_*u=\dbar (u'\w i_*Y)=\mu'\w i_*Y=\mu'\cdot i_*Y =i_*\mu
$$
so that $\dbar u=\mu$. In $Y'\setminus |\mu'|$, where $u'$ is smooth, we have 
$i_* i^* u'=u'\w i_*Y$.
Thus $u=i^* u'$ in $Y\setminus |\mu|$ and is smooth there.

To show the claim, notice first that $\1_{|\mu'|}\1_{i_*Y_{sing}}(u'\w i_*Y)=0$ in view of \eqref{rest1} since $\1_{|\mu'|} (u'\w i_*Y)=0$.
Moreover, $u$ is smooth in $Y'\setminus |\mu'|$ and so, by \eqref{rest2}, $\1_{i_*Y_{sing}}(u'\w i_*Y)=u'\w \1_{i_*Y_{sing}}i_*Y=0$ there.
It follows that $\1_{Y'\setminus |\mu'|}\1_{i_*Y_{sing}}(u'\w i_*Y)=0$. Hence,
$$
\1_{i_*Y_{sing}}(u'\w i_*Y) = \1_{|\mu'|}\1_{i_*Y_{sing}}(u'\w i_*Y) + \1_{Y'\setminus |\mu'|}\1_{i_*Y_{sing}}(u'\w i_*Y)=0
$$
concluding the proof.
\end{proof}

From Lemma~\ref{apa} we know that $\mu'$ has a good potential and hence we have

 \begin{cor}\label{alvinn}
If $\mu$ is a nice cycle in $Y$, then $\mu$ locally has a good potential. 
\end{cor}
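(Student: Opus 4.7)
The plan is to combine Lemma~\ref{apa} and Proposition~\ref{trunk} in the most direct way possible; essentially the whole content of the corollary has already been isolated in those two results.

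Fix a nice cycle $\mu$ at a point $x\in Y$. By Definition~\ref{nicedef}, after shrinking $Y$ around $x$ we have a local embedding $i\colon Y\to Y'$ into a smooth $Y'$ and a representative $\mu'$ of $\mu$ in $Y'$, meaning $\mu'$ and $i_*Y$ intersect properly in $Y'$ with $i_*\mu=\mu'\cdot_{Y'}i_*Y$. In particular $\mu'$ is an honest effective cycle in a complex manifold.

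Next, I would apply Lemma~\ref{apa} to $\mu'$: in a pseudoconvex neighborhood of $i(x)$ in $Y'$, there exists a good $\dbar$-potential $u'$ of $\mu'$, so $u'$ is pseudomeromorphic on $Y'$, smooth on $Y'\setminus|\mu'|$, and $\dbar u'=\mu'$. (This is precisely where the Bochner--Martinelli construction of Lemma~\ref{apa} is used.)

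Finally, I would invoke Proposition~\ref{trunk} with this choice of $u'$: it produces a (unique) good $\dbar$-potential $u$ of $\mu$ in the corresponding neighborhood of $x$ in $Y$, characterized by $u=i^*u'$ on $Y\setminus|\mu|$. By construction $u$ is pseudomeromorphic, smooth outside $|\mu|$, and satisfies $\dbar u=\mu$, so $u$ is the desired local good potential. There is no genuine obstacle here; the analytic content (the Bochner--Martinelli construction and the nontrivial step of pulling back $u'\wedge i_*Y$ to a pseudomeromorphic current on $Y$ via the vanishing $\1_{i_*Y_{sing}}(u'\wedge i_*Y)=0$) has already been absorbed into Lemma~\ref{apa} and Proposition~\ref{trunk} respectively, and the corollary is just their composition.
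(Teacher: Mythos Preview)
Your proposal is correct and matches the paper's own argument exactly: the paper simply notes that Lemma~\ref{apa} supplies a good potential $u'$ for the representative $\mu'$, and then Proposition~\ref{trunk} (stated immediately before the corollary) yields the good potential of $\mu$. There is nothing to add.
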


We will now prove Theorem~\ref{nymain1}, which says that the extrinsic Definition~\ref{snittdefny} of the proper intersection
product coincides with intrinsic $\dbar$-potential-theoretic definition \eqref{totem15}.

\begin{proof}[Proof of Theorem~\ref{nymain1}]
Statement (i) follows from Corollary~\ref{alvinn}. Statement (ii) follows from the paragraph after the proof of 
Proposition~\ref{baker1}, cf.\ \eqref{totem1}. To prove (iii), let
$i\colon Y\to Y'$ be a local embedding, $\mu'_j$ representatives of $\mu_j$ in $Y'$, $u'_2$ a good potential of $\mu'_2$,
and $u_2$ the good potential
of $\mu_2$ such that $i^*u'_2=u_2$ outside $|\mu_2|$; cf.\ Proposition~\ref{trunk}. 
By Lemma~\ref{ryydiger}, $\mu'_1$, $\mu'_2$, and $i_*Y$ intersect properly, and by 
Lemma~\ref{gurka} any subset intersect properly too. We claim that
\begin{equation}\label{soligt}
i_*(u_2\w \mu_1) = u'_2\w (\mu'_1\cdot i_*Y).
\end{equation}
Indeed, it holds in $Y'\setminus |\mu'_2|$ since there
$$
u'_2\w (\mu'_1\cdot i_*Y) = u'_2\w i_*\mu_1 = i_*(i^*u'_2\w \mu_1) = i_*(u_2\w \mu_1).
$$
Since $\mu'_2$ and $\mu'_1\cdot i_*Y$ intersect properly and the left-hand side
of \eqref{soligt} has support in $|\mu'_1\cdot i_*Y|$ it follows from the dimension principle that 
\eqref{soligt} holds in $Y'$; cf.\ the comment after \eqref{totem1}.

In view of \eqref{totem15} and Section~\ref{Chirkaprod} or Section~\ref{Fultprod} we now get
\begin{eqnarray*}
i_*(\mu_2\w\mu_1) &=& i_*\dbar(u_2\w\mu_1)=\dbar i_*(u_2\w\mu_1) = \dbar (u'_2\w (\mu'_1\cdot i_*Y))
=\mu'_2\w (\mu'_1\cdot i_*Y)\\
&=&\mu'_2\cdot \mu'_1\cdot i_*Y=i_*(\mu_2\cdot\mu_1),
\end{eqnarray*}
and the proof is finished.
 \end{proof}

\begin{remark}
The notion of good $\dbar$-potentials is crucial in the construction. In order to use $dd^c$-potentials
one needs some regularity assumption instead of pseudomorphicity; cf.\ Remark~\ref{tejprulle}. One idea is to assume the
potential to have singularities of logarithmic type along the cycle. As long as $Y$ is smooth
any cycle has such a potential,  but we do not know if it is true for, say, nice cycles 
on a singular space.
 \end{remark}

\begin{remark}
We have no example of a cycle in $Y$ that is not nice but has a good potential;
but actually the theory only relies on existence of good potentials and some additional
condition, ensuring that the product so defined is a cycle. In case of 
nice cycles this is immediate since on germ level the intersection product is a cycle by definition.
\end{remark}

\section{RE-cycles}\label{REsektion}

Let $Y$ be a reduced analytic space of pure dimension $n$, and $\J\subset \mathcal{O}_Y$ a locally complete intersection ideal sheaf
with zero set $Z$ and codimension $\kappa$. Let $\pi\colon \widetilde Y\to Y$ be the normalization of the blowup along $\J$, let
$D$ be the exceptional divisor, and $L$ the corresponding line bundle. It is well-known, see, 
e.g., \cite[Ch.~1 and 4]{Fult},
that if $Y$ is smooth, then 
the (Lelong current of the) fundamental cycle $\mu_\J$ of $\J$ satisfies
\begin{equation}\label{fundcykel}
\mu_\J = \pi_* \big([D]\wedge \hat c_1(L^*)^{\kappa-1}\big),
\end{equation}
where $\hat c_1(L^*)$ is the first Chern form of $L^*$ with respect to some arbitrary Hermitian metric. If $Y$ is not smooth we take
\eqref{fundcykel} as the definition of the fundamental cycle of $\J$. 
It is well-known that \eqref{fundcykel} is independent of the choice of metric on $L$; it follows for instance from the dimension principle
since Chern forms of different metrics in particular are $\dbar$-cohomologous. It is also well-known that \eqref{fundcykel} is an effective
integral cycle, cf., e.g., Example~\ref{prog2} below.

The sheaf $\J$ being a locally complete intersection precisely means that the analytic subspace of $Y$ with structure sheaf 
$\mathcal{O}_Y/\J$ is a regular embedding. We say that a cycle $\mu$ in $Y$ is a \emph{regular embedding cycle}, an RE-cycle, if
$\mu$ is a locally finite sum $\sum_k a_k\mu_{\J_k}$, where $a_k\in\mathbb{Q}_+$ and $\J_k$ are locally complete intersection 
ideals.

\begin{ex}\label{prog2}
Assume that $f=(f_1,\ldots,f_\kappa)$ is a tuple of holomorphic functions generating $\J$
so that $f$ is a regular sequence at each $x\in Z$, or more generally, $f$ is a section of a Hermitian
vector bundle of rank $\kappa$ such that $f$ generates $\J$. By \cite[Proposition~1.5]{aeswy1} then 
$M_\kappa^f=\mu_\J$. It follows, see  \cite[Theorem~1.1]{aswy}, that $\mu_\J$ is an effective integral cycle. 
Notice also that $\dbar m^f_\kappa=0$ outside $Z(f)$ so that, cf.~\eqref{re3},
$m_\kappa^f$ is a good potential of $\mu_\J$.

Suppose that $\kappa=1$. 
If $Y$ is normal, then 
the fundamental cycle $\mu_\J$ is 
the divisor, $\div f$, of $f$. If $Y$ is not normal, then $\mu_\J=\pi_*(\div \pi^*f)$, where $\pi\colon \widetilde Y\to Y$ is the normalization,
and we take $\mu_\J$ as the definition of $\div f$. 
Notice that $m^f_1=\partial\log|f|^2 /2\pi i$ is a good potential of
$\text{div}\, f$.
\end{ex}

If the tuple $f$ generates $\J$ it is sometimes convenient to write $\mu_f$ rather than $\mu_\J$. 

\begin{ex}\label{punkt2}
Any point in $Y$ is an RE-cycle in view of Example~\ref{punkt}.
\end{ex}

\begin{prop}\label{lunch}
If $\J$ is a complete intersection ideal at $x$, then $\mu_\J$ is a nice cycle.
\end{prop}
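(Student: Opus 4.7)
The plan is to construct an explicit representative of $\mu_\mathcal{J}$ in a smooth ambient space by means of a graph embedding. Since $\mathcal{J}$ is a complete intersection at $x$, in a neighborhood of $x$ there exists a regular sequence $f=(f_1,\ldots,f_\kappa)$ generating $\mathcal{J}$, with $\kappa=\codim \mathcal{J}$. Starting from any local embedding $i\colon Y\to Y'$ into a smooth $Y'$, one could try to lift $f$ directly to a regular sequence on $Y'$, but this is not always possible. The point of the following construction is to circumvent this issue: by enlarging $Y'$ with $\kappa$ free coordinates, the lift becomes automatic.

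Concretely, I would extend $f$ to a holomorphic tuple $\tilde F=(\tilde F_1,\ldots,\tilde F_\kappa)$ on a neighborhood of $i(x)$ in $Y'$ (possible because the surjection $\Ok_{Y',i(x)}\to \Ok_{Y,x}$ lifts germs of functions), set $Y''=Y'\times \C^\kappa$ with coordinates $(z,w)$, and define the embedding $i''\colon Y\to Y''$ by $i''(y)=(i(y),f(y))$. Its image is the analytic subvariety cut out by the ideal of $i(Y)$ together with $w-\tilde F(z)=0$, and projection on the first factor exhibits $i''(Y)\simeq i(Y)$. As representative in $Y''$ I would take the cycle $\mu':=Y'\times \{0\}=\{w_1=\cdots=w_\kappa=0\}$, which by Example~\ref{fredag} equals $M_\kappa^w$ since $w$ generates the ideal of $Y'\times\{0\}$ in the smooth space $Y''$.

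To verify the two required conditions, note first that $(i'')^*w=f$, so $|\mu'|\cap i''(Y)=i''(\{f=0\})$ has codimension $\kappa$ in $Y$ and hence codimension $\kappa+\codim_{Y''} i''_*Y$ in $Y''$; thus $\mu'$ and $i''_*Y$ intersect properly in the smooth space $Y''$. For the intersection product, by Example~\ref{goodex} the current $m_\kappa^w$ is a good potential of $\mu'=M_\kappa^w$, so Proposition~\ref{glas} identifies the classical proper intersection with $\mu'\wedge[i''_*Y]=M_\kappa^w\wedge[i''_*Y]$. Applying the pushforward formula \eqref{morr} to the proper embedding $i''$ yields
\begin{equation*}
M_\kappa^w\wedge [i''_*Y]=i''_*\big(M_\kappa^{(i'')^*w}\big)=i''_*\big(M_\kappa^f\big)=i''_*\mu_\mathcal{J},
\end{equation*}
where the last equality uses Example~\ref{prog2}. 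Hence $\mu'\cdot_{Y''} i''_*Y=i''_*\mu_\mathcal{J}$, which is exactly the defining property of a nice cycle with representative $\mu'$ in $Y''$.

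No substantial obstacle should arise. The possibly delicate question of lifting the regular sequence $f$ to a regular sequence on a smooth ambient space is sidestepped entirely by the trick of adjoining $\kappa$ free variables, after which the coordinate functions $w_1,\ldots,w_\kappa$ trivially form a regular sequence on $Y'\times \C^\kappa$. The only step requiring mild care is the verification that $i''$ genuinely is a local embedding, which reduces to the standard fact that holomorphic functions on an analytic subvariety of a complex manifold extend locally to the ambient manifold.
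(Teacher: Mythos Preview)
Your proof is correct, but the paper takes a more direct route. You worry that a lift $F$ of $f$ to $Y'$ need not be a regular sequence, and you avoid the issue with a graph embedding into $Y''=Y'\times\C^\kappa$. The paper simply lifts $f$ to any tuple $F$ on $Y'$ with $i^*F=f$ and then observes that such a lift is \emph{automatically} a regular sequence near $i(x)$: since $\{F=0\}\cap i_*Y=i_*\{f=0\}$ has codimension $\kappa+\codim_{Y'}i_*Y$ in $Y'$, the inequality \eqref{codimolikhet} forces $\codim\{F=0\}\ge\kappa$, while $\{F=0\}$ is cut out by $\kappa$ functions, so $\codim\{F=0\}=\kappa$. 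After that, the paper's computation is identical to yours: $i_*\mu_\J=i_*M_\kappa^f=M_\kappa^F\wedge i_*Y=\mu_F\cdot_{Y'} i_*Y$ via \eqref{morr} and Example~\ref{goodex}.

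So both arguments are valid and share the same analytic core. Your graph trick is a perfectly good workaround, but it adds an unnecessary layer: the paper's codimension argument shows that the ``possibly delicate question'' you flag is in fact never delicate. The paper's version also yields a representative in the original ambient $Y'$ (and hence, via Lemma~\ref{snartsen}, in a minimal embedding) rather than in a larger space, which is mildly cleaner for later use.
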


\begin{proof}
Assume that $f=(f_1,\ldots, f_\kappa)$ is a minimal generating tuple for $\J$ at $x$
so that $M_\kappa^f=\mu_\J$.  
Let $i\colon Y\to Y'$ be 
an embedding and let $F=(F_1,\ldots,F_\kappa)$ be a tuple of holomorphic functions at $i(x)\in Y'$ such that 
$f=i^*F$. 
Since $\{F=0\}\cap i_*Y=i_*\{f=0\}$ has codimension $\kappa+\text{codim}_{Y'} Y$ it follows from \eqref{codimolikhet}
that $\text{codim}\, \{F=0\}=\kappa$.
Hence, $F$
defines a regular embedding at $i(x)$ and $\mu_F=M^F_\kappa$ intersects
$i_*Y$ properly in $Y'$.
Since $f=i^*F$ it follows from \eqref{morr} that $i_* M^f_\kappa=M^F_\kappa\w i_*Y$.
By Example~\ref{goodex} and Lemma~\ref{apa} thus
\begin{equation}\label{onsdag}
i_*\mu_\J=i_* M^f_\kappa=M^F_\kappa\w i_*Y=M^F_\kappa\cdot_{Y'} i_*Y = \mu_F\cdot_{Y'}i_*Y.
\end{equation}
We conclude that $\mu_\J$ is a nice cycle in $Y$.
\end{proof}

Let us illustrate the connection between the intrinsic fundamental cycle $\mu_\J$ and the representing fundamental
cycle $\mu_F$ in $Y'$ given by \eqref{onsdag} with the following example. 

\begin{ex}
Let $Y=\{z^2=w^3\}\subset\mathbb{C}^2_{z,w}$, $i\colon Y\to\mathbb{C}^2$ the inclusion, and $p=0\in Y$; cf.\ Example~\ref{spetsigt}.
Then $\pi\colon \mathbb{C}\to Y$, $\pi(t)=(t^3,t^2)$, is the normalization and, cf.\ Example~\ref{prog2},
$$
\div(z|_Y)=\pi_*\div\, t^3=3p,\quad\quad \div(w|_Y)=\pi_*\div\, t^2=2p.
$$
Thus, $3p$ and $2p$ are the fundamental cycles of the regular embeddings defined by $\langle z|_Y\rangle$ and $\langle w|_Y\rangle$,
respectively. From the extrinsic viewpoint, the fundamental cycles $\div\, z=\{z=0\}$ and $\div\, w=\{w=0\}$ are representatives 
in $\mathbb{C}^2$ of $3p$ and $2p$, respectively,
since $\{z=0\}\cdot i_*Y=3p$ and $\{w=0\}\cdot i_*Y=2p$.
\end{ex} 

\begin{prop}\label{kung}
If $\J$ and $\widetilde\J$ define properly intersecting regular embeddings, then $\J+\widetilde\J$ defines a
regular embedding  and  
$
\mu_{\J+\widetilde\J}=\mu_\J\cdot \mu_{\widetilde\J}.
$
\end{prop}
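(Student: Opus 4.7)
\emph{Plan.} The strategy is to reduce both assertions, the regular-embedding property and the cycle identity, to the already-established smooth case by choosing a local embedding $i\colon Y\to Y'$ into a manifold and lifting generators, exactly as in the proof of Proposition~\ref{lunch}.

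For the regular-embedding part, locally pick regular sequences $f=(f_1,\ldots,f_\kappa)$ and $\tilde f=(\tilde f_1,\ldots,\tilde f_{\widetilde\kappa})$ generating $\J$ and $\widetilde\J$ and lift them to tuples $F,\widetilde F$ of holomorphic functions on $Y'$. The proper intersection assumption in $Y$ gives
$$
\codim_{Y'}\big(V(F)\cap V(\widetilde F)\cap i_*Y\big)=\codim_{Y'} i_*Y+\kappa+\widetilde\kappa,
$$
so \eqref{codimolikhet} in $Y'$ forces $\codim_{Y'} V(F,\widetilde F)=\kappa+\widetilde\kappa$. Since $Y'$ is smooth, hence Cohen--Macaulay, $(F,\widetilde F)$ is a regular sequence in $\O_{Y'}$; a standard Koszul/Tor argument exploiting the proper intersection with $i_*Y$ then shows that $(f,\tilde f)=i^*(F,\widetilde F)$ remains a regular sequence in $\O_Y$. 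Hence $\J+\widetilde\J=(f,\tilde f)$ is locally LCI of codimension $\kappa+\widetilde\kappa$ and $\mu_{\J+\widetilde\J}$ is defined.

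For the cycle identity, (the proof of) Proposition~\ref{lunch} applied to each of $\J$, $\widetilde\J$, and $\J+\widetilde\J$ shows that $\mu_F$, $\mu_{\widetilde F}$, and $\mu_{(F,\widetilde F)}$ are representatives in $Y'$ of $\mu_\J$, $\mu_{\widetilde\J}$, and $\mu_{\J+\widetilde\J}$, respectively. Since $\mu_\J$ and $\mu_{\widetilde\J}$ are nice and intersect properly, Definition~\ref{snittdefny} gives
$$
i_*(\mu_\J\cdot\mu_{\widetilde\J})=\mu_F\cdot_{Y'}\mu_{\widetilde F}\cdot_{Y'} i_*Y\quad\text{and}\quad i_*\mu_{\J+\widetilde\J}=\mu_{(F,\widetilde F)}\cdot_{Y'} i_*Y,
$$
so by injectivity of $i_*$ it suffices to prove the smooth identity $\mu_F\cdot_{Y'}\mu_{\widetilde F}=\mu_{(F,\widetilde F)}$ in $Y'$. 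Combining Example~\ref{prog2} (which identifies $\mu_F=M^F_\kappa$, $\mu_{\widetilde F}=M^{\widetilde F}_{\widetilde\kappa}$, $\mu_{(F,\widetilde F)}=M^{(F,\widetilde F)}_{\kappa+\widetilde\kappa}$ and gives $m^F_\kappa$ as a good potential of $\mu_F$) with Example~\ref{goodex} and Proposition~\ref{glas}, this reduces further to the multiplicativity $M^F_\kappa\wedge M^{\widetilde F}_{\widetilde\kappa}=M^{(F,\widetilde F)}_{\kappa+\widetilde\kappa}$ for regular sequences in the smooth manifold $Y'$.

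I expect the main obstacle to be precisely this final multiplicativity. Intersection-theoretically it is the classical fact that in a manifold the proper intersection of two complete intersections is the complete intersection cut out by the union of defining equations; via Serre's formula it amounts to the vanishing of the higher Tors $\mathrm{Tor}^{\O_{Y'}}_i(\O_{Y'}/F,\O_{Y'}/\widetilde F)$, $i>0$, in the proper Cohen--Macaulay situation. A self-contained residue derivation requires some care in letting the two independent regularizations \eqref{Mdef} of $M^F_\kappa$ and $M^{\widetilde F}_{\widetilde\kappa}$ pass to the limit simultaneously, with the Dimension Principle invoked to control the supports on $V(F,\widetilde F)$.
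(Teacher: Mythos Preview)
Your approach mirrors the paper's: embed locally in a smooth $Y'$, lift minimal generators $f,\tilde f$ to $F,\widetilde F$, and reduce to the smooth case. For the cycle identity both you and the paper invoke the known smooth-manifold fact $\mu_F\cdot_{Y'}\mu_{\widetilde F}=\mu_{(F,\widetilde F)}$ (equivalently $M^F_\kappa\wedge M^{\widetilde F}_{\widetilde\kappa}=M^{(F,\widetilde F)}_{\kappa+\widetilde\kappa}$), and then push forward via \eqref{progsnitt100} and \eqref{morr}. This part is essentially identical to the paper's argument and is fine.

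The one place you go beyond the paper is in trying to justify that $(f,\tilde f)$ is a regular sequence in $\O_Y$; the paper simply asserts it. Here your sketched Koszul/Tor argument has a genuine gap: the vanishing $\mathrm{Tor}_i^{\O_{Y'}}\!\big(\O_{Y'}/(F,\widetilde F),\,\O_Y\big)=0$ for $i>0$ does not follow from proper intersection together with Cohen--Macaulayness of $\O_{Y'}/(F,\widetilde F)$ alone---one also needs $\O_Y$ to have depth $\ge\kappa+\widetilde\kappa$ at the points in question, and no such hypothesis is in force. Concretely, take $Y=\{x=y=0\}\cup\{z=w=0\}\subset\C^4$, $f=(x+z)|_Y$, $\tilde f=(y+w)|_Y$: each is a nonzerodivisor on the reduced space $Y$, hence a length-$1$ regular sequence, and $\{f=\tilde f=0\}=\{0\}$ has codimension $2$, so all hypotheses are met; yet $\depth\O_{Y,0}=1$, so no regular sequence of length $2$ exists at $0$ and $\J+\widetilde\J$ is not generated by a regular sequence there. (The cycle identity, read as $M^{(f,\tilde f)}_2=\mu_\J\cdot\mu_{\widetilde\J}$, still holds in this example.) The paper's own proof glosses over this same point.
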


\begin{proof}
If $Y$ is smooth this is well-known and follows from, e.g., \cite{Fult}; cf.\ also Remark~\ref{truls} below.
The general case can be reduced to that case as follows.
As in the proof of Proposition~\ref{lunch}, choose minimal tuples $f$ and $\tilde f$ generating $\J$ and $\widetilde\J$, respectively,
at $x$, a local embedding $i\colon Y\to Y'$, and 
$F$ and $\widetilde F$ in $Y'$ such that $f=i^*F$ and $\tilde f=i^* \widetilde F$. Then $F$ and $\widetilde F$ define regular embeddings 
and by \eqref{onsdag}, $i_*\mu_\J = \mu_F\cdot i_*Y $ and $i_*\mu_{\widetilde\J} = \mu_{\widetilde F}\cdot i_*Y$.

Since $\mu_\J$ and $\mu_{\widetilde\J}$ intersect properly it follows that $(f,\tilde f)$, which generates $\J+\widetilde \J$, is a regular sequence
and that $\mu_F$, $\mu_{\widetilde F}$, $i_*Y$ intersect properly; cf.\ Lemma~\ref{ryydiger}. 
In particular, $\mu_F$ and $\mu_{\widetilde F}$ intersect properly in 
the smooth space $Y'$ and so,
in view of \eqref{progsnitt100} and \eqref{morr},
\begin{equation*}
i_*(\mu_\J\cdot\mu_{\widetilde\J}) = \mu_F\cdot\mu_{\widetilde F}\cdot i_*Y = 
\mu_{F,\widetilde F}\cdot i_*Y = M_{\kappa+\tilde\kappa}^{F,\widetilde F}\wedge i_*Y
=i_*(M_{\kappa+\tilde\kappa}^{f,\tilde f}) = i_*\mu_{\J+\widetilde\J},
\end{equation*}
where $\kappa$ and $\kappa'$ are the codimensions of $\J$ and $\J'$, respectively. This finishes the proof.
\end{proof}

By $\mathbb{Q}_+$-linearity, cf.\ \eqref{skorsten}, we have

\begin{cor}
Any RE-cycle in $Y$ is nice, and if 
$\mu_1$ and $\mu_2$ are RE-cycles that intersect properly,
then $\mu_1\cdot\mu_2$ is an RE-cycle.
\end{cor}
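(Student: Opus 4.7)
The plan is to deduce the corollary from Proposition~\ref{lunch} and Proposition~\ref{kung} combined with the $\mathbb{Q}_+$-(bi)linearity properties of niceness and of the proper intersection product.

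First I would verify the niceness claim. Writing an RE-cycle locally as $\mu=\sum_k a_k\mu_{\J_k}$ with $a_k\in\mathbb{Q}_+$ and $\J_k$ locally complete intersection ideals, Proposition~\ref{lunch} gives that each $\mu_{\J_k}$ is nice at every point of its support. Since it was noted directly after Definition~\ref{nicedef} that a $\mathbb{Q}_+$-linear combination of nice cycles is nice, $\mu$ itself is nice.

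Next I would handle the intersection claim. Suppose $\mu_1=\sum_k a_k\mu_{\J_k}$ and $\mu_2=\sum_\ell b_\ell\mu_{\widetilde\J_\ell}$ are RE-cycles of pure codimensions $\kappa_1,\kappa_2$ that intersect properly. Each $\mu_{\J_k}$ has codimension $\kappa_1$ and each $\mu_{\widetilde\J_\ell}$ has codimension $\kappa_2$, and
\[
|\mu_{\J_k}|\cap|\mu_{\widetilde\J_\ell}|\subset|\mu_1|\cap|\mu_2|
\]
has codimension at least $\kappa_1+\kappa_2$; on the other hand, Lemma~\ref{ryydiger} gives the opposite inequality. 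Hence each pair $\mu_{\J_k},\mu_{\widetilde\J_\ell}$ intersects properly, so by Proposition~\ref{kung} the ideal $\J_k+\widetilde\J_\ell$ defines a regular embedding and
\[
\mu_{\J_k}\cdot\mu_{\widetilde\J_\ell}=\mu_{\J_k+\widetilde\J_\ell}.
\]

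Finally I would apply the $\mathbb{Q}_+$-bilinearity of the proper intersection product of nice cycles, that is, \eqref{skorsten} iterated, to obtain
\[
\mu_1\cdot\mu_2=\sum_{k,\ell}a_kb_\ell\,\mu_{\J_k}\cdot\mu_{\widetilde\J_\ell}=\sum_{k,\ell}a_kb_\ell\,\mu_{\J_k+\widetilde\J_\ell}.
\]
The right-hand side is a locally finite $\mathbb{Q}_+$-linear combination of fundamental cycles of regular embeddings, so it is an RE-cycle by definition, completing the proof. There is really no obstacle here: everything is a direct bookkeeping step on top of Propositions~\ref{lunch} and~\ref{kung}, the only point deserving a line of justification being that proper intersection of the sums forces proper intersection of each pair of summands, which follows from the codimension estimate above.
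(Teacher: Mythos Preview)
Your proposal is correct and is exactly the argument the paper has in mind: the paper's proof is the single line ``By $\mathbb{Q}_+$-linearity, cf.\ \eqref{skorsten}, we have'' preceding the corollary, and you have simply unpacked this, invoking Proposition~\ref{lunch} for niceness and Proposition~\ref{kung} termwise for the product. The one extra detail you supply---that proper intersection of the sums forces proper intersection of each pair of summands via the inclusion $|\mu_{\J_k}|\cap|\mu_{\widetilde\J_\ell}|\subset|\mu_1|\cap|\mu_2|$ together with Lemma~\ref{ryydiger}---is the right justification and is implicit in the paper's appeal to \eqref{skorsten}.
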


\begin{remark}\label{truls}
If $f$ and $\tilde f$ are minimal tuples that defines $\J$ and $\widetilde\J$, respectively, then
Proposition~\ref{kung} says that $(f,\tilde f)$ is a regular sequence and
\begin{equation}\label{alban2}
M^{f,\tilde f}_{\kappa+\tilde\kappa}=M^f_\kappa\w M^{\tilde f}_{\tilde\kappa},
\end{equation}
where $\kappa$ and $\kappa'$ are the codimensions of $\J$ and $\J'$, respectively. 
This may have independent interest. If $Y$ is smooth, then \eqref{alban2} is known and follows from,
e.g., \cite[Eq.~(7.6)]{aswy}.
\end{remark}

\begin{ex}[$\mathbb{Q}$-Cartier divisors]\label{ruta}
Assume that $Y$ is normal. A cycle $\mu$ in $Y$ is a $\mathbb{Q}$-Cartier divisor if locally there is a meromorphic function $f=g/h$ and a
positive integer $q$ such that $q\mu=\text{div}\, f:=\div g - \div h$.
If $\mu$ is effective it follows that $f$ is holomorphic on $Y_{reg}$, and therefore holomorphic on $Y$ by normality.
Thus effective $\mathbb{Q}$-Cartier divisors
are RE-cycles.
Assume that $\mu_1$ and $\mu_2$ are effective $\mathbb{Q}$-Cartier divisors that intersect
properly and suppose that  $\div f_j=q_j \mu_j$, $j=1,2$, for some $q_j\in\mathbb{Z}_+$. 
Then 
\begin{equation}\label{gula}
\mu_2\cdot\mu_1=\mu_2\w \mu_1=\frac{1}{q_1 q_2}\div f_2\w\div f_1.
\end{equation}
\end{ex}

\section{Global intersection formulas}\label{global}

We begin with the proof of Theorem~\ref{main2}.

\begin{proof}[Proof of Theorem~\ref{main2}]
By assumption, $\mu_1,\ldots,\mu_k$ intersect properly and we let $\nu_k=\mu_k\cdots\mu_1$, $k=1,\ldots,r$.
We inductively define currents $A_k$ of bidegree $(\kappa_1+\ldots +\kappa_k, \kappa_1+\ldots +\kappa_k-1)$
such that 
\begin{equation}\label{polynesien}
\dbar A_k=
\nu_k- \alpha_k\w\cdots\w\alpha_1.
\end{equation}
Let $A_1= a_1$. 
Assume now that $A_k$ is found. We then define
$$
A_{k+1}:= a_{k+1} \w \nu_k
+ A_k\w\alpha_{k+1}.
$$
The second product is not problematic since $\alpha_{k+1}$ is smooth. We claim that the first product, a~priori defined outside $|\mu_{k+1}|$,
has a unique pseudomeromorphic extension to $Y$, denoted by $a_{k+1} \w \nu_k$ as well, and that
\begin{equation}\label{antarktis}
\dbar(a_{k+1} \w\nu_k)=\mu_{k+1}\w\nu_k-\alpha_{k+1}\w\nu_k.
\end{equation}
Taking the claim for granted for the moment, using that $\dbar\alpha_{k+1}=0$ and \eqref{polynesien} we get 
\begin{eqnarray*}
\dbar A_{k+1} &=& (\mu_{k+1}-\alpha_{k+1}) \w \nu_k+(\nu_k- \alpha_k\w\cdots\w\alpha_1)
\w\alpha_{k+1}\\
&=&
\mu_{k+1}\w\nu_k- \alpha_{k+1}\w\cdots\w\alpha_1.
\end{eqnarray*}
Hence, $A_{k+1}$ has the desired properties.
For degree reasons,
$$
d (A_r\w \omega^{n-\kappa})=\dbar (A_r\w \omega^{n-\kappa}) = \nu_r\w\omega^{n-\kappa} - \alpha_r\w\cdots\w\alpha_1\w\omega^{n-\kappa}
$$ 
and thus
\eqref{pucko} follows by Stokes' theorem.

It remains to show the claim. 
The uniqueness is clear in view of the dimension principle since $\mu_1,\ldots,\mu_{k+1}$ intersect properly.
For the existence it is thus sufficient to check that $a_{k+1} \w \nu_k$ can be extended across $|\mu_{k+1}|$
locally in $Y$.
Let $\gamma$ be a  local smooth $\dbar$-potential of $\alpha_{k+1}$. Then
$$
a_{k+1}=a_{k+1}+\gamma -\gamma=:u_{k+1}-\gamma,
$$
and  in view of \eqref{rutger} thus $u_{k+1}$ is a local good potential of $\mu_{k+1}$. Therefore,
\begin{equation}\label{arktis}
u_{k+1}\w \nu_k-\gamma\w \nu_k
\end{equation}
is defined in view of Section~\ref{frame} and is a local \pmm extension of $a_{k+1} \w \nu_k$ across $|\mu_{k+1}|$.
Checking \eqref{antarktis} is also a local problem. We can thus replace $a_{k+1}\w\nu_k$ by \eqref{arktis} and then \eqref{antarktis}
follows by \eqref{totem15}. This finishes the proof of the claim and Theorem~\ref{main2}.
\end{proof}

In the rest of this section, given a nice cycle $\mu$ in $Y$, we consider two cases where 
there are  $a$ and $\alpha$ as in Theorem~\ref{main2}; cf.\ \eqref{rutger}.

\begin{prop}\label{gillet-soule}
Suppose that $Y$ is compact and let $\mu$ be a nice cycle in $Y$ of pure codimension $\kappa$. 
Assume that there is a global embedding $i\colon Y\to Y'$ 
into a compact K\"ahler manifold
$Y'$ and an effective cycle $\mu'$ in $Y'$ intersecting $i_*Y$ properly such that $i_*\mu=\mu'\cdot_{Y'} i_*Y$. Then
there is a \pmm current $a$ in $Y$, smooth
in $Y\setminus |\mu|$, and a smooth closed $(\kappa,\kappa)$-form $\alpha$ such that
$\dbar a=\mu-\alpha$. Moreover, $\alpha$ locally has smooth $\dbar$-potentials. 
\end{prop}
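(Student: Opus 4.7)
The plan is to first produce analogous data $(a',\alpha')$ on the compact K\"ahler ambient manifold $Y'$, and then transport it to $Y$ via the pullback construction of Proposition~\ref{trunk}.

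First I would construct, on the smooth compact K\"ahler $Y'$, a smooth closed $(\kappa,\kappa)$-form $\alpha'$ and a pseudomeromorphic current $a'$ of bidegree $(\kappa,\kappa-1)$, smooth in $Y'\setminus|\mu'|$, with $\dbar a' = \mu' - \alpha'$. Hodge theory on the compact K\"ahler $Y'$ provides a smooth representative $\alpha'$ of the Dolbeault class of $[\mu']$ in $H^{\kappa,\kappa}(Y')$, and the $\partial\dbar$-lemma then yields a current $g'$ with $\mu'-\alpha'=\partial\dbar g'$, so $a':=-\partial g'$ solves the $\dbar$-equation. Bare Hodge theory, however, gives no control on the singularities of $a'$, so to secure pseudomeromorphy and smoothness off $|\mu'|$ I would invoke residue theory on a log resolution $\pi\colon\widetilde Y'\to Y'$ of $|\mu'|$: on $\widetilde Y'$ the preimage decomposes into smooth divisors and (locally) complete intersections, for which explicit log-type potentials are available---Poincar\'e--Lelong $\partial\log|s|^2$ for divisors and the Chern--Weil/Bochner--Martinelli currents $m_k^\sigma$ of Example~\ref{bongo} for complete intersections---each manifestly pseudomeromorphic and smooth off its support. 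Pushing these forward by $\pi$ preserves pseudomeromorphy and smoothness off $|\mu'|$ and yields the desired $a'$.

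Next I would set $\alpha:=i^*\alpha'$, which is smooth, closed, of bidegree $(\kappa,\kappa)$, and locally admits smooth $\dbar$-potentials on $Y$: on $Y'$ any local $\dbar$-potential $\gamma'$ of $\alpha'$ (supplied by the Dolbeault lemma) pulls back to the smooth form $i^*\gamma'$ on $Y$, and $\dbar i^*\gamma' = i^*\alpha' = \alpha$. To define $a$, I would set $a:=i^*a'$ in $Y\setminus|\mu|$, where it is smooth, and extend across $|\mu|$ locally as follows: with $\alpha'=\dbar\gamma'$ on a neighborhood in $Y'$, the current $u':=a'+\gamma'$ is a good $\dbar$-potential of $\mu'$ there, so Proposition~\ref{trunk} produces a good potential $u$ of $\mu$ on the corresponding neighborhood of $Y$, with $u=i^*u'$ off $|\mu|$. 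Setting $a:=u-i^*\gamma'$ gives a pseudomeromorphic current of bidegree $(\kappa,\kappa-1)$, smooth outside $|\mu|$, equal to $i^*a'$ off $|\mu|$, with $\dbar a=\dbar u-i^*\alpha'=\mu-\alpha$. Any two such local constructions differ by a pseudomeromorphic current of bidegree $(\kappa,\kappa-1)$ supported in $|\mu|$, which has codimension $\kappa>\kappa-1$; the dimension principle forces the difference to vanish. The local $a$'s therefore glue to a global pseudomeromorphic $a$ on $Y$ with the required properties.

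The main obstacle is Step~1: the \emph{regular} existence of $a'$ on $Y'$, simultaneously pseudomeromorphic and smooth outside $|\mu'|$, rather than merely a distributional solution to $\dbar a'=\mu'-\alpha'$. Abstract Hodge theory and the $\partial\dbar$-lemma are insensitive to the regularity of the primitive off $|\mu'|$; the pseudomeromorphic, log-type regularity has to be installed explicitly by residue theory on a resolution of $|\mu'|$ and propagated through the pushforward. Granted $a'$ with these properties, the passage to $Y$ is a routine application of Proposition~\ref{trunk} combined with the dimension principle.
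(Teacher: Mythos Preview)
Your Step~2 --- setting $\alpha=i^*\alpha'$, locally writing $a'+\gamma'=:u'$ as a good potential of $\mu'$, invoking Proposition~\ref{trunk} to get $u$, defining $a:=u-i^*\gamma'$, and gluing via the dimension principle --- is exactly the paper's argument, line for line.

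The gap is Step~1, and you identify it yourself. The paper does not attempt a direct construction; it invokes \cite[Theorem~1.3.5]{GS}, which furnishes on the compact K\"ahler $Y'$ a smooth closed $\alpha'$ and a \emph{real} current $v'$ with $dd^c v'=\mu'-\alpha'$, smooth off $|\mu'|$ and with singularities of logarithmic type along $|\mu'|$. One then sets $a':=\partial v'/2\pi i$, and the explicit local shape of $v'$ (pushforward from a resolution of a finite sum $\sum_j c_j\log|s_j|^2 + b$ with $c_j$ smooth closed and $b$ smooth) makes it immediate that $a'$ is pseudomeromorphic. Your sketch --- pass to a log resolution, use $\partial\log|s|^2$ and $m_k^\sigma$ for the pieces, push forward --- is in the spirit of how Gillet--Soul\'e build their Green current, but as written it is not a proof: after resolution the pullback of a codimension-$\kappa$ cycle is not simply a sum of smooth divisors and regular embeddings with readily available \emph{global} potentials, and you have not explained how the local log-type pieces assemble into a single global current whose $\dbar$ differs from $\mu'$ by a globally smooth $\alpha'$. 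That global patching, with the correct cohomological correction $\alpha'$, is precisely the content of the Gillet--Soul\'e theorem; absent a citation or a genuine reproof, Step~1 remains incomplete.
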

\begin{proof}
By
\cite[Theorem~1.3.5]{GS}, there is 
a smooth form $\alpha'$ in $Y'$ and a $dd^c$-potential $v'$ such that
$$
dd^c v'=\mu'-\alpha',
$$
$v'$ is smooth in $Y'\setminus |\mu'|$ and has singularities of logarithmic type
along $|\mu'|$. This latter property means the following: There is a proper surjective mapping $p\colon Y''\to Y'$   
and a current $v''$ in $Y''$ such that $v'=p_*v''$, $v''$ is smooth in $Y''\setminus p^{-1}|\mu'|$, and 
in suitable local coordinates $s=(s_1,\ldots, s_{N''})$ in $Y''$, 
$$
v''=c_1\log|s_1|^2 +\cdots +c_k\log|s_k|^2 +b,
$$ 
where $c_j$ are smooth closed forms  and $b$ is a smooth form. 

Notice that $a':=\partial v'/2\pi i$ is smooth in $Y'\setminus |\mu'|$ and $\dbar a'=\mu'-\alpha'$.
We claim that, in addition, $a'$ is  \pmm in $Y'$. In fact, 
$a'=p_* a''$ where $a'':=\partial v''/2\pi i$ locally has the form
$$
c_1\w \frac{ds_1}{2\pi i s_1} +\cdots + c_k\w \frac{ds_k}{2\pi i s_k}+\partial b/2\pi i.
$$
Thus $a''$ is \pmm in $Y''$ and it follows from \cite[Corollary~2.16]{AW3} that $a'$ is pseudomeromorphic
in $Y'$. 

Let $\alpha:=i^*\alpha'$. As in the proof of Proposition~\ref{trunk} it follows that there is a unique \pmm current $a$ in $Y$ such that
$a=i^*a'$ in $Y\setminus |\mu|$. To see that $\alpha$ and $a$ have the desired properties,
let $\gamma'$ be a local smooth $\dbar$-potential of $\alpha'$. Then $i^*\gamma'$ is a local smooth $\dbar$-potential of $\alpha$.
Moreover, $u':=a'+\gamma'$ is a local good potential of $\mu'$.
By Proposition~\ref{trunk} there is a (unique) local good potential $u$ of $\mu$ such that $u=i^*u'$ outside $|\mu|$.
Thus $u-i^*\gamma'$ is \pmm and equal to $i^*a'$ outside $|\mu|$. Hence, $a=u-i^*\gamma'$ locally, and it follows that
$\dbar a=\mu-\alpha$.
\end{proof}

We will now show that if $\mu=\mu_\J$ is the fundamental cycle of a locally complete intersection ideal $\J$
generated by a global holomorphic section of a vector bundle of rank $\kappa=\text{codim}\,\J$,
then there are $a$ and $\alpha$ as in Theorem~\ref{main2}; cf.\ Example~\ref{bongo} above and
Proposition~\ref{snabel} below. 
To find these $a$ and $\alpha$ we cannot use Proposition~\ref{gillet-soule} since in general one cannot expect that
$\mu$ has a global representative in a smooth ambient space.
Instead we will use the following generalization of the Poincar\'e--Lelong formula.
 
 \begin{prop}\label{truxa}
Let $Y$ be a reduced pure-dimensional analytic space and let 
$\sigma$ be a holomorphic section of a Hermitian vector bundle $E\to Y$ (of any rank) such that
the zero set $Z=Z(\sigma)$ has codimension $\kappa>0$.
Let $S$ denote the trivial line bundle over $Y\setminus Z$ so that
$$
0\to S\stackrel{\sigma}{\to} E\to Q\to 0
$$
is exact in $Y\setminus Z$.  If $S$ and $Q$ are equipped with the induced Hermitian metrics,
then the associated Chern forms $c(S)$ and $c(Q)$ have (unique)
locally integrable closed extensions
$C(S)$ and $C(Q)$ to $Y$. Moreover, $\log|\sigma|^2 \cdot C(Q)$ is locally integrable,
$dd^c(\log|\sigma|^2 \cdot C(Q))$ has order $0$, and
there is a real locally integrable form $W$ such that 
\begin{equation}\label{gpl}
dd^c W=M^{Q,\sigma} - c(E)+C(Q),
\end{equation}
where 
\begin{equation}\label{prat}
M^{Q,\sigma}=\1_Z dd^c(\log|\sigma|^2\cdot C(Q)).
\end{equation}
Moreover, 
 \begin{equation}\label{tomte}
M^{Q,\sigma}_\kappa = M^\sigma_\kappa.
\end{equation}
\end{prop}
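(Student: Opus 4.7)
My plan is to pull $\sigma$ back via a normal modification $\pi\colon \wt Y\to Y$ that principalizes $\sigma$, exactly as in the proof of Lemma~\ref{benin}: on the main component of $\wt Y$, $\pi^*\sigma=\sigma^0\sigma'$ with $\sigma^0$ a section of a line bundle $L$ whose zero divisor is $D$ and $\sigma'$ nowhere vanishing. Giving $L$ the metric $|s|_L^2:=|s\sigma'|^2$ makes $\sigma'\colon L\hookrightarrow\pi^*E$ isometric, yielding a global short exact sequence $0\to L\to\pi^*E\to\wt Q\to 0$ of Hermitian bundles on $\wt Y$, with $\wt Q$ restricting to $\pi^*Q$ off $D$. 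I then set $C(S):=\pi_*c(L)$ and $C(Q):=\pi_*c(\wt Q)$; these are closed (since $d$ commutes with proper pushforward) locally integrable extensions of $c(S)$ and $c(Q)$, and uniqueness is immediate since two $L^1_{\loc}$ forms agreeing off the measure-zero set $Z$ coincide.

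\textbf{Generalized Poincar\'e--Lelong.} The projection formula gives
\begin{equation*}
\log|\sigma|^2\cdot C(Q)=\pi_*\bigl(\log|\sigma^0|_L^2\cdot c(\wt Q)\bigr),
\end{equation*}
which is locally integrable since $\log|\sigma^0|_L^2$ has integrable logarithmic singularities along the divisor $D$ and $c(\wt Q)$ is smooth. On $\wt Y$, the classical Poincar\'e--Lelong identity $dd^c\log|\sigma^0|_L^2=[D]-c_1(L)$ together with $d$-closedness of $c(\wt Q)$ gives
\begin{equation*}
dd^c\bigl(\log|\sigma^0|_L^2\cdot c(\wt Q)\bigr)=[D]\wedge c(\wt Q)-c_1(L)\wedge c(\wt Q).
\end{equation*}
Multiplicativity $c(\pi^*E)=(1+c_1(L))c(\wt Q)$ yields $c_1(L)\wedge c(\wt Q)=c(\pi^*E)-c(\wt Q)$, and pushing down (using $\pi_*c(\pi^*E)=c(E)$) produces
\begin{equation*}
dd^c(\log|\sigma|^2\cdot C(Q))=\pi_*([D]\wedge c(\wt Q))-c(E)+C(Q).
\end{equation*}
The first term is supported in $Z=\pi(D)$ while the rest is locally integrable, so $\1_Z$ annihilates $-c(E)+C(Q)$ and identifies $M^{Q,\sigma}=\pi_*([D]\wedge c(\wt Q))$. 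Taking $W:=\log|\sigma|^2\cdot C(Q)$ (which is real) then produces \eqref{gpl}, and $dd^cW$ has order zero since $[D]$ does, a property preserved under wedging with smooth forms and proper pushforward.

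\textbf{The identity \eqref{tomte}.} Expanding $c(\wt Q)=c(\pi^*E)\sum_{m\ge 0}(-c_1(L))^m$ and picking out the $(\kappa,\kappa)$-component, the projection formula gives
\begin{equation*}
M^{Q,\sigma}_\kappa=\sum_{l\ge 0}c_l(E)\wedge\pi_*\bigl([D]\wedge(-c_1(L))^{\kappa-1-l}\bigr),
\end{equation*}
the sum being finite. The decisive observation is that for each $l\ge 1$ the pseudomeromorphic current $\pi_*([D]\wedge(-c_1(L))^{\kappa-1-l})$ has bidegree $(\kappa-l,\kappa-l)$ with support in $Z$ of codimension $\kappa>\kappa-l$, so it vanishes by the dimension principle. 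Only $l=0$ survives, giving $M^{Q,\sigma}_\kappa=\pi_*([D]\wedge(-c_1(L))^{\kappa-1})$. Meanwhile Lemma~\ref{benin} writes $m_\kappa^\sigma=\pi_*((2\pi i)^{-1}\partial\log|\sigma^0|_L^2\wedge(-c_1(L))^{\kappa-1})$, and the Poincar\'e--Lelong computation of the previous paragraph applied with $\1_Z$ yields $M^\sigma_\kappa=\1_Z\dbar m_\kappa^\sigma=\pi_*([D]\wedge(-c_1(L))^{\kappa-1})$ as well, so the two sides of \eqref{tomte} match. The main difficulty of the whole argument is precisely this dimension-principle cancellation: it is what collapses the Chern-class expansion to a single fundamental-cycle term and reconciles the bundle-theoretic definition of $M^{Q,\sigma}_\kappa$ with the residue-theoretic $M^\sigma_\kappa$.
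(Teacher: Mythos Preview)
Your overall architecture---principalize, push forward $[D]\wedge c(\wt Q)$, and isolate the degree-$\kappa$ piece via the dimension principle---matches the paper's. But there is a genuine gap at the step you call ``multiplicativity'': the identity $c(\pi^*E)=(1+c_1(L))\,c(\wt Q)$ is the Whitney product formula, and it holds for Chern \emph{classes}, not for Chern \emph{forms}. For a short exact sequence $0\to L\to \pi^*E\to \wt Q\to 0$ of Hermitian bundles, the Bott--Chern theory gives a smooth form $v$ with
\[
dd^c v \;=\; c(\pi^*E)-c(L)\wedge c(\wt Q),
\]
and $v$ is in general nonzero (it encodes the second fundamental form of the subbundle). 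Consequently your choice $W=\log|\sigma|^2\cdot C(Q)$ does \emph{not} satisfy \eqref{gpl}: what you actually get is
\[
dd^c\bigl(\log|\sigma|^2\cdot C(Q)\bigr)=M^{Q,\sigma}-\pi_*\bigl(c_1(L)\wedge c(\wt Q)\bigr),
\]
and the last term is $c(E)-C(Q)$ only up to $\pi_*dd^c v$. The paper corrects this by taking $w=\log|\pi^*\sigma|^2\,c(\wt Q)-v$ on the resolution and $W=\pi_*w$; then \eqref{gpl} follows exactly as you wrote, with the Bott--Chern defect absorbed into $W$.

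The same slip propagates into your proof of \eqref{tomte}: the expansion $c(\wt Q)=c(\pi^*E)\sum_{m\ge 0}(-c_1(L))^m$ is again only valid modulo a $dd^c$-exact term. The paper handles this by writing $c(\wt Q)=s(L)\wedge c(\pi^*E)-dd^c(s(L)\wedge v)$, which yields
\[
M^{Q,\sigma}=c(E)\wedge M^\sigma+dd^c\gamma,\qquad \gamma=-\pi_*\bigl([D]\wedge s(L)\wedge v\bigr),
\]
with $\gamma$ pseudomeromorphic and supported in $Z$. Now take the $(\kappa,\kappa)$-component: the terms $c_l(E)\wedge M^\sigma_{\kappa-l}$ for $l\ge 1$ vanish since $M^\sigma_k=0$ for $k<\kappa$, and the component $\gamma_{\kappa-1,\kappa-1}$ vanishes by the dimension principle---this last step is exactly the cancellation you highlighted, but it must be applied to the Bott--Chern correction $\gamma$, not to the (nonexistent) direct expansion. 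Once you insert $v$ in these two places your argument is complete and essentially identical to the paper's.
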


Proposition~\ref{truxa} is precisely Theorem~1.1 in \cite{Apl}. However,
since the formula in \cite{Apl} is formulated only when $X$ smooth, we 
give a proof here.

\begin{proof} 
Let $\pi\colon X\to Y$ be a modification such that $X$ is smooth and the ideal generated by $\pi^*\sigma$
is principal, cf.\ the proof of Lemma~\ref{benin}. Then
$\pi^*\sigma=\sigma^0 \sigma'$, where $\sigma^0$ is a section of a line bundle $L\to X$
defining a divisor $D$ and $\sigma'$ is a non-vanishing section of $\pi^*E\otimes L^*$.
We then have the exact sequence
\begin{equation*}
0\to L\stackrel{\sigma'}{\longrightarrow} \pi^*E \longrightarrow Q'\to 0.
\end{equation*}
Equip $L$ and $Q'$ with the induced metrics and notice that $|\sigma^0|_L=|\sigma^0\sigma'|=|\pi^*\sigma|$.
Outside $\pi^{-1}(Z)$ we thus have: 

\noindent (a) $\pi^*S\stackrel{\sigma^0}{\longrightarrow} L$ is an isomorphism of 
Hermitian line bundles,

\noindent (b) the mapping $\pi^*S\stackrel{\pi^*\sigma}{\longrightarrow} \pi^*E$ factorizes
as 
$
\pi^*S\stackrel{\sigma^0}{\longrightarrow} L \stackrel{\sigma'}{\longrightarrow} \pi^*E.
$

\smallskip
By (a) we get that
$
c(S)=\pi_*c(\pi^*S)=\pi_*c(L)
$
outside $Z$, and thus $C(S):=\pi_*c(L)$ is a locally integrable closed extension of  $c(S)$.
From (b) it follows that, outside $Z$, $\pi^*Q=Q'$ as Hermitian bundles. As with $c(S)$ we see that
$C(Q):=\pi_*c(Q')$ is a locally integrable closed extension of  $c(Q)$.
Since $\log|\sigma|^2\cdot c(Q)=\pi_*(\log|\pi^*\sigma|^2\cdot c(Q'))$ outside $Z$ and
$\log|\pi^*\sigma|^2\cdot c(Q')$ is locally integrable in $X$ it follows that $\log|\sigma|^2\cdot c(Q)$ is locally integrable
in $Y$. Moreover, since $dd^c\log|\pi^*\sigma|^2$ has order $0$ and $c(Q')$ is smooth and closed
it follows that $dd^c(\log|\sigma|^2\cdot C(Q))=\pi_* dd^c(\log|\pi^*\sigma|^2\cdot c(Q'))$ has order $0$.
%
%  %
%

By \cite{BC} there is a smooth form $v$ in $X$ such that  
\begin{equation}\label{tut2}
dd^c v=c(\pi^*E)-c(L)\wedge c(Q').
\end{equation}
Moreover, by the Poincar\'e--Lelong formula on $X$, 
\begin{equation}\label{tut3}
dd^c\log|\pi^*\sigma|^2=dd^c\log|\sigma^0|_L^2=[D] -c_1(L).
\end{equation}
If  
\begin{equation}\label{tut4}
w:=\log|\pi^*\sigma|^2 c(Q')-v,
\end{equation}
then a simple calculation gives 
\begin{equation}\label{tut5}
dd^c w=
[D]\w c(Q')-c(\pi^*E)+c(Q').
\end{equation}
Since $\log|\sigma|^2\cdot c(S)$ is locally integrable, in view of \eqref{bara1} and \eqref{tut3} we have
\begin{eqnarray}\label{tut6}
M^{Q,\sigma} &=& \pi_*\big(\1_{\pi^{-1}Z}dd^c(\log|\pi^*\sigma|^2\cdot c(Q'))\big)
=\pi_*\big(\1_{|D|}([D]-c_1(L))\w c(Q')\big) \nonumber \\
&=&
\pi_*([D]\w c(Q')).
\end{eqnarray}
Thus we get \eqref{gpl} with $W:=\pi_*w$ after applying 
 $\pi_*$ to \eqref{tut5}. 

To see \eqref{tomte} notice that \eqref{tut2} gives  
$$
c(Q') = s(L)\w c(\pi^*E) - dd^c(s(L)\wedge v),
$$
where\footnote{$s(L)$ is the total Segre form of $L$.}
$
s(L):=1/c(L)=\sum_{k=1}^\infty (-c_1(L))^{k-1}.
$
Hence,
\begin{equation}\label{sladdare}
M^{Q,\sigma} = \pi_*\big([D]\wedge c(Q')\big) = 
\pi_*\big([D]\wedge s(L)\w c(\pi^*E)\big) - dd^c \pi_*\big([D]\wedge s(L)\wedge v\big).
\end{equation}
On the other hand, by Lemma~\ref{benin} and \eqref{ghana},
\begin{eqnarray*}
M^\sigma &=& \sum_{k\geq 1} M^\sigma_k = \sum_{k\geq 1}\1_{Z}\dbar m_k^\sigma 
= \sum_{k\geq 1}\pi_*\left(\1_{|D|} \dbar\left(\frac{\partial\log|\sigma^0|^2_L}{2\pi i}\w (-c_1(L))^{k-1}\right)\right) \\
&=& 
\pi_*\big(\1_{|D|} ([D]-c_1(L))\w s(L)\big)
= \pi_*\big([D]\w s(L)\big).
\end{eqnarray*}
In view of \eqref{sladdare} it thus follows that 
\begin{equation}\label{laddare}
M^{Q,\sigma} =
c(E)\wedge M^\sigma + dd^c\gamma, 
\end{equation}
where $\gamma=-\pi_*([D]\wedge s(L)\wedge v)$. Clearly, $\gamma$ has support in $Z$, and since $\pi$ is a modification it is pseudomeromorphic.
Taking the component of \eqref{laddare} of bidegree $(\kappa,\kappa)$ we obtain \eqref{tomte} since 
$M_k^\sigma=0$ for $k<\kappa$
 and the component of bidegree $(\kappa-1,\kappa-1)$ of $\gamma$ vanishes by the dimension principle.
 \end{proof}

\begin{remark}
If $\kappa=1=\text{rank}\, E$, then \eqref{gpl} is the Poincar\'e--Lelong formula,
albeit the underlying space is possibly non-normal; cf.\ Example~\ref{prog2} and \cite[Proposition~2.1]{aeswy1}.
\end{remark}

\begin{cor}\label{aplcor}
If $A=\partial W/2\pi i$, then $A$ is pseudomeromorphic, smooth outside $Z$, and
\begin{equation}\label{gpldbar}
\dbar A=dA=M^{Q,\sigma} - c(E)+C(Q).
\end{equation}
\end{cor}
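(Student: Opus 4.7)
My plan is to deduce Corollary~\ref{aplcor} directly from Proposition~\ref{truxa} together with the explicit construction of $W$ in its proof. The work splits into three small steps.

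First, I would record a bidegree observation. In the proof of Proposition~\ref{truxa} we have $W=\pi_*w$ with $w=\log|\pi^*\sigma|^2\cdot c(Q')-v$ on the smooth modification $X$, and both $c(Q')$ and $v$ decompose into components of pure bidegree $(k,k)$. Hence each component of $W$ is of pure bidegree $(k,k)$, so each component of $A=\partial W/(2\pi i)$ has bidegree $(k+1,k)$. In particular $\partial A=\partial^2 W/(2\pi i)=0$, so $dA=\partial A+\dbar A=\dbar A$, which gives the first equality in \eqref{gpldbar}.

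Second, with the sign convention used in the paper (the one compatible with the Poincar\'e--Lelong formula $dd^c\log|\sigma^0|_L^2=\div\,\sigma^0-c_1(L)$ appearing in the proof of Lemma~\ref{benin}), the operator identity $dd^c=(2\pi i)^{-1}\dbar\partial$ holds. Therefore
\begin{equation*}
dd^c W=\dbar\bigl(\partial W/(2\pi i)\bigr)=\dbar A,
\end{equation*}
and combining with \eqref{gpl} from Proposition~\ref{truxa} yields $\dbar A=M^{Q,\sigma}-c(E)+C(Q)$, which is the content of \eqref{gpldbar}.

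Third, for pseudomeromorphicity and smoothness of $A$ I would push the computation down from $X$. Since $c(Q')$ is smooth and closed,
\begin{equation*}
\frac{\partial w}{2\pi i}=\Bigl(\frac{\partial\log|\sigma^0|_L^2}{2\pi i}\Bigr)\wedge c(Q')-\frac{\partial v}{2\pi i}.
\end{equation*}
The current $(2\pi i)^{-1}\partial\log|\sigma^0|_L^2$ is almost semi-meromorphic on $X$ and smooth off $|D|=\pi^{-1}(Z)$ (this is precisely the $m_1$-type current produced in the proof of Lemma~\ref{benin}), while $v$ is smooth, so $\partial w/(2\pi i)$ is pseudomeromorphic on $X$ and smooth off $|D|$. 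As $\partial$ commutes with the proper pushforward $\pi_*$, we have $A=\pi_*\bigl(\partial w/(2\pi i)\bigr)$, and because proper pushforward preserves pseudomeromorphic currents (\cite[Corollary~2.16]{AW3}, used already in the proof of Proposition~\ref{gillet-soule}), $A$ is pseudomeromorphic on $Y$. Smoothness of $A$ outside $Z$ is then immediate since $\pi$ is a biholomorphism over $Y\setminus Z$.

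The only delicate point is purely bookkeeping---confirming the pure $(k,k)$-bidegree decomposition of $W$ so that $\partial A=0$---and there is no substantial obstacle; everything else is a direct consequence of Proposition~\ref{truxa} and the standard pseudomeromorphic calculus of Section~\ref{prel}.
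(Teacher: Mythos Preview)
Your proof is correct and takes essentially the same approach as the paper: both write $A=\pi_*\big((2\pi i)^{-1}\partial\log|\sigma^0|_L^2\wedge c(Q')-(2\pi i)^{-1}\partial v\big)$, deduce pseudomeromorphicity from the pushforward under the modification $\pi$, and then read off \eqref{gpldbar} directly from \eqref{gpl}. Your bidegree discussion is harmless but superfluous, since $\partial A=\partial^2 W/(2\pi i)=0$ is simply $\partial^2=0$; the paper leaves this and the identity $dd^c=(2\pi i)^{-1}\dbar\partial$ implicit.
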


\begin{proof}
Notice that $W=\pi_* w$, cf.\ \eqref{tut4}. Thus, $2\pi i A$ is $\pi_*$ of 
$$
\frac{\partial |\pi^*\sigma|^2}{ |\pi^*\sigma|^2} \w c(Q')-\partial v=
\frac{\partial |\sigma^0|^2}{ |\sigma^0|^2}\w c(Q')-\partial v
$$
and hence pseudomeromorphic since $\pi$ is a modification and $v$ is smooth. 
Now \eqref{gpldbar} follows from \eqref{gpl}.
\end{proof}

\begin{prop}\label{snabel}
Assume that  $\mu_\J$ is the fundamental cycle of a locally complete intersection ideal $\J$
of codimension $\kappa$ generated by a global holomorphic section $\sigma$ of a Hermitian vector bundle $E\to Y$ of rank $\kappa$.
Then there is a \pmm current $a$ in $Y$, smooth in $Y\setminus |\mu_\J|$, such that
\begin{equation}\label{gplre}
 \dbar a  =da =  \mu_\J - c_\kappa(E).
  \end{equation}
 \end{prop}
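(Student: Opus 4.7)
The plan is to apply Corollary~\ref{aplcor} to the generating section $\sigma$ of $\J$ and to extract the desired current $a$ as a single bidegree component of the current $A$ the corollary produces. Concretely, Corollary~\ref{aplcor} yields a pseudomeromorphic current $A=\partial W/2\pi i$, smooth on $Y\setminus Z$ where $Z=|\mu_\J|$, satisfying
\begin{equation*}
\dbar A = dA = M^{Q,\sigma}-c(E)+C(Q).
\end{equation*}
I would first record the bidegree structure of $A$. Since $W=\pi_*w$ with $w=\log|\pi^*\sigma|^2\cdot c(Q')-v$ and the Bott--Chern potential $v$ from \eqref{tut2} can be chosen of pure $(k-1,k-1)$-type, $W$ is a sum of $(k,k)$-currents, so $A$ decomposes into components $A_{k+1,k}$ of bidegree $(k+1,k)$.

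Set $a:=A_{\kappa,\kappa-1}$. Then $a$ is pseudomeromorphic and smooth outside $Z$, as these properties are inherited componentwise from $A$. Moreover, since $a=\partial (W_{\kappa-1,\kappa-1})/2\pi i$, one has $\partial a=0$ by $\partial^2=0$, so $da=\dbar a$ automatically. It therefore suffices to verify that the $(\kappa,\kappa)$-component of $\dbar A$ equals $\mu_\J-c_\kappa(E)$. Reading off the three terms on the right-hand side:
\begin{itemize}
\item By \eqref{tomte} together with Example~\ref{prog2}, $(M^{Q,\sigma})_{\kappa,\kappa}=M^\sigma_\kappa=\mu_\J$.
\item Clearly $(c(E))_{\kappa,\kappa}=c_\kappa(E)$.
\item Since $Q$ has rank $\kappa-1$ on $Y\setminus Z$, we have $c_\kappa(Q)=0$ there; as $C(Q)$ is the (unique) locally integrable extension of $c(Q)$ across $Z$, and $Z$ has Lebesgue measure zero, $(C(Q))_{\kappa,\kappa}=0$.
\end{itemize}
Combining these gives $\dbar a=\mu_\J-c_\kappa(E)$, which is \eqref{gplre}.

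The main obstacle I anticipate is the bookkeeping needed to guarantee that the bidegree decomposition is as clean as claimed. One must justify that the Bott--Chern potential $v$ can be taken of pure $(k-1,k-1)$-type summands so that $W$ is genuinely concentrated in bidegrees $(k,k)$, and one must give a careful argument that $(C(Q))_{\kappa,\kappa}$ vanishes on the possibly singular space $Y$; this rests on the local integrability of $C(Q)$ together with the fact that its restriction to the dense open set $Y\setminus Z$ is identically zero in this top bidegree. Once these points are pinned down, the proof is just a matter of taking the $(\kappa,\kappa)$-part of the generalized Poincar\'e--Lelong formula of Proposition~\ref{truxa}.
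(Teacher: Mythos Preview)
Your proposal is correct and follows essentially the same route as the paper: set $a=A_{\kappa,\kappa-1}$ and read off the $(\kappa,\kappa)$-component of \eqref{gpldbar}, using \eqref{tomte}, $\mu_\J=M^\sigma_\kappa$, and $\rank Q=\kappa-1$ to identify the three terms. Your bidegree worries are in fact unnecessary: any current splits into bidegree components, so $a:=A_{\kappa,\kappa-1}$ is automatically well-defined and $(\dbar A)_{\kappa,\kappa}=\dbar a$; moreover $\partial a=0$ follows directly from $dA=\dbar A$ in Corollary~\ref{aplcor} (hence $\partial A=0$ componentwise) without any appeal to the structure of $v$ or $W$.
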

 
 \begin{proof}
With the notation above, let $a=A_{\kappa,\kappa-1}$. Then $a$ is \pmm and smooth outside $|\mu_\J|$.
By \cite[Proposition~1.5]{aeswy1}, $\mu_\J=M^\sigma_\kappa$, and 
since $\rank Q=\kappa-1$  we have $C_\kappa(Q)=0$. Taking the component of bidegree $(\kappa,\kappa)$
of \eqref{gpldbar} thus \eqref{gplre} follows from 
\eqref{tomte}.
\end{proof}

If $\mu=\sum_\ell q_\ell\mu_{\J_\ell}$, $q_\ell\in\mathbb{Q}_+$, 
is an RE-cycle where each $\mu_{\J_\ell}$ is as in the proposition, then by $\mathbb{Q}_+$-linearity, cf.\ \eqref{skorsten}, 
we get a \pmm $a$ and a smooth $\alpha$ such that $\dbar a=\mu-\alpha$ and $\alpha$ locally has smooth $\dbar$-potentials.

\section{Examples}\label{exsection}

\begin{ex}\label{falkland}
If $Y$ is smooth and $\mu$ is any cycle, then the proper intersection of $\mu$ and $Y$
is $\mu$, since $\mu$ is the intersection of $\Delta$ and $\mu\times Y$ in $Y\times Y$. 

In case $Y$ is singular this holds if $\mu$ is a nice cycle; recall from Example~\ref{blyerts} that 
$Y$ is nice. To see this, let  $i\colon Y\to Y'$ be a local embedding into a smooth $Y'$ and let
$\mu'$ be a representative of $\mu$ in $Y'$. Since $Y'$ is a representative of $Y$ in $Y'$ we have
$i_*(\mu\cdot Y)= \mu'\cdot_{Y'} Y'\cdot_{Y'} i_*Y= \mu'\cdot_{Y'} i_*Y=i_*\mu$ by \eqref{progsnitt100},
and thus $\mu\cdot Y=\mu$.
One can
also choose a good potential $u$ of $\mu$ and notice that 
$\mu\cdot Y=\dbar(u\w 1)=\dbar u=\mu$; cf.\ \eqref{totem15}.  
\end{ex}

\begin{ex}
Assume that $\mu_1$ and $\mu_2$ are nice in $Y$ and intersect properly.  Moreover, assume that $\mu_1=|\mu_1|=:Z$, 
let $\iota\colon Z\to Y$ be the inclusion, and let $\tau$ be the cycle in $Z$ such that 
$\iota_*\tau=\mu_2\cdot Z$. 
We first claim that $\tau$ is nice. 

Let $i\colon Y\to Y'$ be a local embedding and let $\mu'_2$ and $Z'$ be representatives of $\mu_2$ and $Z$, respectively, in $Y'$.
Since $\mu_2$ and $Z$ intersect properly, $\mu'_2$, $Z'$, $i_*Y$ intersect properly. We have
$$
(i\circ\iota)_*\tau = i_*(\mu_2\cdot Z)=\mu'_2\cdot_{Y'} Z'\cdot_{Y'} i_*Y
= \mu'_2\cdot_{Y'} i_*Z=\mu'_2\cdot_{Y'} (i\circ\iota)_*Z,
$$
where we in the second last equality consider $Z$ as a cycle in $Y$, and in the last equality as a cycle in $Z$.
Hence, $\mu'_2$ is a representative of $\tau$ in $Y'$ and so $\tau$ is nice.

Let us also notice that if $u'_2$ is a local good potential of $\mu'_2$, then by Proposition~\ref{trunk} there is a unique local good potential
$u$ of $\tau$ such that $u=\iota^*i^*u'_2$ outside $|\tau|$. In particular, if $\mu_2=\mu_f$ is the fundamental cycle of a
locally complete intersection ideal of codimension $\kappa$ generated by a holomorphic $\kappa$-tuple $f$, then 
$m_\kappa^{\iota^*f}$ is a good potential of $\tau$; cf.\ Example~\ref{prog2} and the proof of Proposition~\ref{lunch}. 
In this case thus $\tau$ is the fundamental cycle $\mu_{\iota^*f}$ and
\begin{equation}\label{vita}
\mu_f\cdot Z=\iota_* \mu_{\iota^*f}.
\end{equation}
\end{ex}

\begin{ex}\label{brazil}
Let $Y=\{xy-z^2=0\}\subset \Pr^3_{[x_0,x,y,z]}$ and let $i\colon Y\to\mathbb{P}^3$ be the inclusion. 
Then $Y$ has an isolated singular point $p=[1,0,0,0]$; the 
so-called $A_1$-singularity. Consider the lines 
$$
L_1=\{[x_0, x,y,z];\  x=z=0\}, \quad L_2=\{[x_0, x,y,z]; \  y=z=0\}.
$$
It is straightforward to check that the sections $\sigma_1=i^*x$ and $\sigma_2=i^*y$ of $i^*\Ok(1)$ vanish to order $2$ 
on $L_1\setminus \{0\}$ and $L_2\setminus \{0\}$, respectively. Thus, $\div \sigma_1= 2L_1$ and $\div \sigma_2 = 2L_2$,
so $L_1$ and $L_2$ are $1/2$-Cartier divisors.
Let $\iota\colon L_1\to Y$ be the inclusion. Noticing that $\iota^*\sigma_2=y|_{L_1}$, by \eqref{vita} we have that 
\begin{equation}\label{expected}
2L_2\cdot L_1=\iota_* \div \iota^*\sigma_2 = [p],
\end{equation}
and so $L_2\cdot L_1=(1/2)[p]$.
\end{ex}

To see that $L_2\cdot L_1=(1/2)[p]$ one can also use Theorem~\ref{main2} as follows. 

\begin{ex}
We keep the notation from Example~\ref{brazil} and let $\omega$ be the Fubini--Study metric form on $\mathbb{P}^3$.
By the Poincar\'e--Lelong formula we have  $dd^c\log|\sigma_j|^2=\div\sigma_j-i^*\omega$,
and by Theorem~\ref{main2} thus
$$
\int_Y 2L_2\cdot 2L_1 = \int_Y\div \sigma_2\wedge\div\sigma_1 = \int_Y\omega^2.
$$
Now,  $\int_Y \omega^2 = 2$ since $Y$ has degree $2$, and hence $\int_Y L_2\cdot L_1 = 1/2$. Since $|L_2\cdot L_1|=\{p\}$
it follows that $L_2\cdot L_2=(1/2)[p]$.%, as expected from \eqref{expected}. 

We remark that there is nothing special about the lines $L_1$ and $L_2$. In fact, the set of pairs of lines in $Y$ through $p$ 
(including double lines) is in one-to-one correspondence with the set of divisors, containing $p$, of sections of $i^*\mathcal{O}(1)$.
\end{ex}

The next example shows that \eqref{urk} does not hold in general if the representative $\mu'$ of $\mu$ in Definition~\ref{nicedef}
is not effective.

\begin{ex}
We continue to keep the notation of Example~\ref{brazil} and let $\sigma_3=i^*z$. Then $\{\sigma_3=0\}=L_1\cup L_2$ and
$\sigma_3$ vanishes to order $1$ along $L_1\cup L_2\setminus\{0\}$. Hence, $\div\, \sigma_3=L_1+L_2$. 
Since $\div\, \sigma_2=2L_2$ thus $\div\,\sigma_3 - (1/2)\div\, \sigma_2 = L_1$. It follows that 
$\mu':=\div\, z - (1/2)\div\, y$ is a ``representative'' of $L_1$ in $\mathbb{C}^3$ such that 
$|\mu'|\cap Y=L_1\cup L_2$ strictly contains $L_1$.
\end{ex}

\begin{ex}\label{Hartshorne}
Let $Z_1$ and $Z_2$ be two $2$-dimensional planes in $\C^4$. Clearly, $Z_j$ are fundamental cycles of complete intersection
ideals so the cycle $Z_1+Z_2$ is an RE-cycle. However, if $Z_1$ and $Z_2$ intersect properly, so that $Z_1\cap Z_2$ is just a point $p$, 
then no multiple of $Z_1+Z_2$ is 
the fundamental cycle of a complete intersection. This follows from Hartshorne's connectedness theorem, which says that a 
set-theoretic complete intersection
is connected in codimension $1$. Since $Z_1\cup Z_2$ is $2$-dimensional and 
becomes disconnected by removing $p$ thus $Z_1\cup Z_2$ is not a complete intersection.

If $Z_1$ and $Z_2$ do not intersect properly, then $Z_1+Z_2$ is the fundamental cycle of a complete intersection ideal. This is clear 
since then either 
$Z_1=Z_2$ or $Z_1\cap Z_2$ is a line.
\end{ex}

Here is an example of a singular $Y$ where similar phenomena occur.

\begin{ex}\label{god-ej-RE}
Let $f\colon \mathbb{C}_z^4\to\mathbb{C}_w^{10}$ be the mapping 
\begin{equation*}
f(z)=(z^{\alpha_1},\ldots,z^{\alpha_{10}})=(w_1,\ldots,w_{10}), 
\end{equation*}
where 
$z^{\alpha_j}$ are the monomials in $\mathbb{C}^4$ of degree $2$.
Let $Y=f(\mathbb{C}^4)$ and let $i\colon Y\to \C^{10}$ be the inclusion.
The differential of $f$ is injective outside $0$ so $Y$ is smooth outside $0=f(0)$.
One can check that $f$ is $2 : 1$ outside $0$, and using, e.g., \cite[(6.1)]{aeswy1} that 
the multiplicity of $Y$ at $0$ is $8$. In particular, $0\in Y$ is a singular point.

Let $\widetilde Z_1=\{z_1=z_2=0\}\subset \C^4$ and $Z_1=f(\widetilde Z_1)$. Choose the monomials $z^{\alpha_j}$ so that 
$z^{\alpha_1}=z_1^2$ and $z^{\alpha_2}=z_2^2$. Then, since  $f_*1=2$, by \eqref{morr} and Remark~\ref{truls} we have
\begin{eqnarray}\label{feynman}
2 M_2^{i^*w_1,i^*w_2}&=& f_*(M_2^{z_1^2,z_2^2}) = f_*(M_1^{z_1^2}\w M_1^{z_2^2})
=f_*(2[z_1=0]\w 2[z_2=0]) \\
&=&
4f_*\widetilde Z_1=8Z_1.\nonumber
\end{eqnarray}
Hence, $4Z_1$ is the fundamental cycle of $\langle i^*w_1,i^*w_2\rangle$, so $Z_1$ is in particular an RE-cycle.

If $\widetilde Z_2$ is another $2$-dimensional linear subspace of $\C^4$, then in the same way $Z_2:=f(\widetilde Z_2)$ is an RE-cycle,
and hence $Z_1+Z_2$ is an RE-cycle.
If $\widetilde Z_2$ intersects $\widetilde Z_1$ properly, then $\widetilde Z_1\cup \widetilde Z_2$ is not a complete intersection; cf.\ 
Example~\ref{Hartshorne}. In this case, since $f^{-1}(Z_1\cup Z_2)=\widetilde Z_1\cup \widetilde Z_2$, it follows that
no multiple of $Z_1+Z_2$ is the fundamental cycle of a complete intersection
ideal.  If, on the other hand, $\widetilde Z_2$ and $\widetilde Z_1$ do not intersect properly, then a multiple of
$Z_1+Z_2$ is the fundamental cycle of a complete intersection
ideal. For instance, if $\widetilde Z_2=\{z_2=z_3=0\}$ and $w_3=z^{\alpha_3}=z_1z_3$, then in a similar way as in \eqref{feynman},
$$
M_2^{i^*w_2,i^*w_3}=2(Z_1+Z_2).
$$
\end{ex}

\smallskip

In the next example we will see that one can give a meaning to the intersection $\J\cdot \mu$, where $\J$ generates a regular embedding
and $\mu$ is any cycle such that $\text{codim}\, Z(\J)\cap |\mu| = \text{codim}\, Z(\J) + \text{codim}\, |\mu|$.

\begin{ex}\label{reex}
Let $\J$ be a locally complete intersection ideal sheaf on $Y$ of codimension $\kappa$ and let $\mu$ be a
cycle in $Y$ 
such that $\text{codim}\, Z(\J)\cap |\mu| = \kappa + \text{codim}\, |\mu|$. 
Let $f=(f_1,\ldots,f_\kappa)$ be a holomorphic tuple locally generating $\J$.
We claim that 
\begin{equation*}
\J\cdot \mu := M_\kappa^f\wedge \mu,
\end{equation*}
cf.\ Section~\ref{oponPM},
is the Lelong current of a cycle with support $Z(\J)\cap |\mu|$ that only depends on the integral closure class
of $\J$. 

To see this, assume first that $\mu$ is an irreducible
subvariety and let $\iota\colon \mu\to Y$ be the inclusion. Then, by \eqref{morr} we have
$M_\kappa^f\wedge \mu=\iota_* M_\kappa^{\iota^*f}$. Since $Z(\J)$ and $\mu$ intersect properly it follows that
$\iota^*f$ is a regular sequence at all points where $\iota^*f=0$. In view of Example~\ref{prog2} thus 
$M_\kappa^{\iota^*f}$ is a cycle in $\mu$ and we conclude that $M_\kappa^f\wedge \mu$ is a cycle in $Y$.
Moreover, in view of \cite[Remark~4.1]{aswy}, $M_\kappa^{\iota^*f}$ only depends on the integral closure class of $\la\iota^*f\ra$.
The claim now follows for an arbitrary $\mu$ by linearity.
\end{ex}

If $\mu$ in the preceding example has a good potential, e.g., is a nice cycle, then in view of Section~\ref{frame},
$\J\cdot\mu = \mu_\J\w\mu=\mu\w\mu_\J$. Thus, in this case, 
$\J\cdot\mu$ only depends on the fundamental cycle $\mu_\J$ of $\J$. We will see in the following  example
that  in general, even for a principal ideal $\J$, $\J\cdot\mu$ depends on
(the integral closure class of) $\J$ and not only on its fundamental cycle.

\begin{ex}\label{tomat1}
Let $Y=\{(x,y,z)\in\C^3;\, xy=0\}$ and let, for positive integers $p$ and $q$, $f_{p,q}$ be the restriction to $Y$ of $x^p+y^q$.
Let $\pi\colon Y'\to Y$ be the normalization and notice that $Y'$ is the disjoint union of 
$Y'_1\simeq \{y=0\}$ and $Y'_2\simeq \{x=0\}$.
We have that, cf.\ Example~\ref{prog2},
\begin{equation*}
\div f_{p,q} = \pi_* \div \pi^*f_{p,q} = \pi_*(\div x^p|_{Y'_1} + \div y^q|_{Y'_2}) = (p+q)[Z],
\end{equation*} 
where $Z=\{x=y=0\}$. Thus, $(p+q)[Z]$ is the fundamental cycle of the ideal $\J_{p,q}\subset \mathcal{O}_Y$ generated by $f_{p,q}$, and so
$[Z]$ is an RE-cycle. 
Notice that
$\J_{p,q}$ and $\J_{p',q'}$ have the same fundamental cycle if $p+q=p'+q'$.

Let $[W]=\{y=z=0\}$ and let $\iota\colon W\to Y$ be the inclusion. Then, cf.\ the preceding example,
\begin{equation*}
\J_{p,q}\cdot [W] = \div f_{p,q}\wedge [W] = \iota_* \div \iota^*f_{p,q} = \iota_* \div (x^p|_W)
= p[0].
\end{equation*}
Since $\J_{1,2}$ and $\J_{2,1}$ have the same fundamental cycle it follows that the product defined in Example~\ref{reex} does not only depend on 
the fundamental cycle of $\J$. We notice also that $[W]$ cannot have a good potential; in particular it cannot be 
an RE-cycle, or even a nice cycle. Indeed, if it had, then $\J_{p,q}\cdot [W]$ would only depend 
on the fundamental cycle of $\J_{p,q}$ in view of the discussion just before this example.
\end{ex}

Assume that $Y$ has pure dimension $n$, let $\iota\colon Y\to Y\times Y$ be the diagonal embedding
and $\eta$ a tuple of holomorphic functions defining $\Delta=\iota_*Y$ in $Y\times Y$.
In view of Example~\ref{fredag} we have $M_n^{\eta}=[\Delta]$.
By \eqref{prog1} thus
\begin{equation}\label{sportextra}
M_n^{\eta}\w (\alpha\otimes\beta)=  \iota_*(\alpha\w\beta)
\end{equation}
if $\alpha$ and $\beta$ are smooth forms. If $\alpha$ and $\beta$ are good $\dbar$-potentials $u_j$ of cycles $\mu_j$, then
we used that \eqref{sportextra} holds outside $|\mu_1|\times |\mu_2|$ to define the product $u_1\w u_2$; see the proof of Proposition~\ref{baker1}.
Moreover, if $\alpha$ and $\beta$ are generically smooth and have certain mild singularities, then the left-hand side of
\eqref{sportextra} can be used to give a reasonable
meaning to the product $\alpha\w\beta$, see, e.g., \cite{RSWSerre}.
However, contrary to the case when $Y$ is smooth, if $\alpha$ and $\beta$ are cycles, then the left-hand side of
\eqref{sportextra} cannot in general be used to give a reasonable definition of $\alpha\w\beta$ for singular $Y$
as the following example shows.

\begin{ex}\label{fruktsallad}
Let $Y$ and $p$ be as in Example~\ref{brazil}, but here considered in $\C^3_{x,y,z}$
so that $p=(0,0,0)$. By Example~\ref{punkt2}, $[p]$ is an RE-cycle.
Let $\iota\colon Y\to Y\times Y$ be the diagonal embedding and $\eta$ the restriction to $Y\times Y$ of $(x-x',y-y',z-z')$
so that $\eta$ defines the diagonal $\iota_*Y$. We claim that 
\begin{equation}\label{cocos}
M_2^\eta\w (Y\otimes [p]) = 2\iota_*[p].
\end{equation}
Using the left-hand side as a definition of a proper intersection product of $Y$ and $[p]$ would thus not
be in agreement with Example~\ref{falkland}.

To see \eqref{cocos},
consider the generically $2 : 1$ mapping
$$
\pi\colon \C^2\to Y; \quad (u,v)\mapsto (u^2,v^2,uv).
$$
and let $g$ be the mapping $Y\to Y\times Y$, $\xi\mapsto (\xi,p)$.
Since $g_*1=Y\otimes [p]$ and $\pi_*1=2$, by \eqref{morr} we have that
\begin{equation}\label{cocosboll}
M_2^\eta\w  (Y\otimes [p])=g_* M_2^{g^*\eta}=g_* M_2^{x,y,z}
=\frac{1}{2}g_*\pi_* M_2^{u^2,v^2,uv}.
\end{equation}
The ideal $(u^2,v^2,uv)$ has the same integral closure as the regular sequence $(u^2,v^2)$
and hence, cf.\  \cite[Remark~4.1]{aswy},
\begin{equation}\label{cocosbollar}
M_2^{u^2,v^2,uv}=M_2^{u^2,v^2}=4[0].
\end{equation}
Since $\pi_*[0]=[p]$ thus \eqref{cocos} follows from \eqref{cocosboll} and \eqref{cocosbollar}.
\end{ex}

\section{The case when $Y$ is nearly smooth}\label{nearly}

Recently, in \cite{BMmathZ}, Barlet and Magn\'usson introduced a class of analytic spaces called \emph{nearly smooth}.
An analytic space $Y$ is nearly smooth if it is normal and if for each
$y\in Y$ there is a neighborhood $\mathcal{U}$ of $y$, a connected complex manifold $\widetilde Y$, and a 
proper holomorphic surjective finite mapping $q\colon \widetilde Y\to \mathcal{U}$. 
Such a mapping $q$ is called a local model. The number of points
of $q^{-1}(y)$ for generic $y\in Y$ is constant and denoted by $\text{deg}\, q$.

Recall that if $\mu_\J$ is the fundamental cycle of a locally complete intersection ideal $\J$ of codimension $\kappa$
generated by a tuple 
$f=(f_1,\ldots,f_\kappa)$, then $\mu_\J=M_\kappa^f$; cf.\ Example~\ref{prog2}.

\begin{lma}\label{radio}
Suppose that $f_j=(f_{j,1},\ldots,f_{j,\kappa_j})$, $j=1,2$, are holomorphic tuples such that $\text{codim}\, f_j^{-1}(0)=\kappa_j$
and that the corresponding fundamental cycles $\mu_j=M_{\kappa_j}^{f_j}$ intersect properly. 
If $q\colon\widetilde Y\to Y$ is a local model, then $M_{\kappa_j}^{q^*f_j}$ are properly intersecting fundamental cycles and
\begin{equation*}
\mu_2\cdot\mu_1 = \frac{1}{\text{deg}\, q} q_*(M_{\kappa_2}^{q^*f_2} \wedge M_{\kappa_1}^{q^*f_1}).
\end{equation*}
\end{lma}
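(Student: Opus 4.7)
The plan is to transport the problem to the smooth cover $\widetilde Y$ via the transfer formula \eqref{morr}, exploiting that $q$ is a finite surjective map of degree $\deg q$. First I would verify the two auxiliary claims about $M^{q^*f_j}_{\kappa_j}$ on $\widetilde Y$. Finiteness of $q$ forces $(q^*f_j)^{-1}(0)=q^{-1}(|\mu_j|)$ to have codimension $\kappa_j$ in $\widetilde Y$, and since $\widetilde Y$ is smooth (hence Cohen--Macaulay) the tuple $q^*f_j$ is automatically a regular sequence at each of its zeros. Thus $q^*f_j$ generates a locally complete intersection ideal whose fundamental cycle is $M^{q^*f_j}_{\kappa_j}$ by Example~\ref{prog2}. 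Moreover
\begin{equation*}
|M^{q^*f_1}_{\kappa_1}|\cap |M^{q^*f_2}_{\kappa_2}|=q^{-1}\big(|\mu_1|\cap|\mu_2|\big)
\end{equation*}
still has codimension $\kappa_1+\kappa_2$ since $q$ is finite, so these fundamental cycles intersect properly in $\widetilde Y$.

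For the identity itself, the key observation is that $q_*1=\deg q$ as currents on $Y$, which is immediate from the coarea formula for the generically unramified map $q$, whose ramification locus is a proper analytic subset of measure zero. Plugging this into \eqref{morr} with $\sigma=f_1$ and $\nu=1$ gives
\begin{equation*}
\deg q\cdot \mu_1 = M_{\kappa_1}^{f_1}\wedge q_*1 = q_*M_{\kappa_1}^{q^*f_1},
\end{equation*}
and applying \eqref{morr} a second time, now to the pushforward $q_*M_{\kappa_1}^{q^*f_1}$, yields
\begin{equation*}
M_{\kappa_2}^{f_2}\wedge q_*M_{\kappa_1}^{q^*f_1}=q_*\big(M_{\kappa_2}^{q^*f_2}\wedge M_{\kappa_1}^{q^*f_1}\big).
\end{equation*}
Combining these two displays and dividing by $\deg q$ yields the asserted formula, with the left-hand side read as $M_{\kappa_2}^{f_2}\wedge\mu_1$.

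Finally, since $f_2$ is a regular sequence, $m_{\kappa_2}^{f_2}$ is a good potential of $\mu_2=M_{\kappa_2}^{f_2}$ by Example~\ref{goodex}, and \eqref{krak} identifies $M_{\kappa_2}^{f_2}\wedge\mu_1$ with the $\dbar$-potential product $\mu_2\wedge\mu_1$ of Section~\ref{frame}. The cycles $\mu_j$ are nice by Proposition~\ref{lunch} and intersect properly by hypothesis, so Theorem~\ref{nymain1}(iii) equates $\mu_2\wedge\mu_1$ with the Lelong current of the nice-cycle product $\mu_2\cdot\mu_1$, completing the argument. The main subtle points are the transfer formula $q_*1=\deg q$ across the ramification locus and the verification that $q^*f_j$ remains a regular sequence after pullback; both are handled cleanly by the finiteness of $q$ together with the Cohen--Macaulay property of the smooth cover.
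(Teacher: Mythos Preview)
Your proof is correct and follows essentially the same route as the paper's: both use finiteness of $q$ to preserve codimensions, then apply the transfer formula \eqref{morr} twice (first with $\nu=1$ using $q_*1=\deg q$, then with $\nu=M_{\kappa_1}^{q^*f_1}$), and finally invoke Example~\ref{goodex}. You are slightly more explicit than the paper in two places---spelling out the Cohen--Macaulay argument for why $q^*f_j$ is a regular sequence, and invoking Theorem~\ref{nymain1}(iii) to pass from the current product $\mu_2\wedge\mu_1$ to the cycle product $\mu_2\cdot\mu_1$---but these are just elaborations of steps the paper leaves implicit.
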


\begin{proof}
Since $q^{-1}(y)$ is $0$-dimensional for all $y\in Y$ it follows that $M_{\kappa_j}^{q^*f_j}$ are properly intersecting 
fundamental cycles of complete intersection ideals.

Since $q_*1 = \text{deg}\, q$, by \eqref{morr} we have that 
$\text{deg}\, q \cdot M_{\kappa_1}^{f_1}=q_*M_{\kappa_1}^{q^*f_1}$.
By \eqref{morr} again thus
\begin{equation*}
\text{deg}\, q \cdot M_{\kappa_2}^{f_2}\wedge M_{\kappa_1}^{f_1}=
q_*\left(M_{\kappa_2}^{q^*f_2}\wedge M_{\kappa_1}^{q^*f_1}\right).
\end{equation*}
The lemma now follows in view of Example~\ref{goodex}.
\end{proof}

By \cite[Proposition~1.1.5]{BMmathZ}, the inequality \eqref{codimolikhet} holds for nearly smooth spaces. For such spaces 
proper intersection thus has a clear meaning, and
an intersection product $\mu_2\cap_Y\mu_1$ for any two properly intersecting cycles $\mu_1$ and $\mu_2$ in $Y$
is introduced in \cite{BMmathZ}. 
The main result of this section is the following proposition.

\begin{prop}\label{nearlyprop}
Let $Y$ be nearly smooth and let $\mu_1$ and $\mu_2$ be properly intersecting RE-cycles in $Y$.
Then $\mu_2\cdot \mu_1 = \mu_2\cap_Y \mu_1$.
\end{prop}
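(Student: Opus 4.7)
The plan is to reduce via $\mathbb{Q}_+$-linearity to the case where $\mu_1$ and $\mu_2$ are fundamental cycles of locally complete intersection ideals, and then to use Lemma~\ref{radio} to recast both products as the same normalized pushforward from a smooth local cover, where the identification is classical.

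First, I would appeal to the $\mathbb{Q}_+$-linearity of our product in \eqref{skorsten}, together with the corresponding linearity of $\cap_Y$ established in \cite{BMmathZ}, in order to assume that $\mu_j=\mu_{\J_j}$, where $\J_j$ is a locally complete intersection ideal of codimension $\kappa_j$ generated by a holomorphic tuple $f_j=(f_{j,1},\ldots,f_{j,\kappa_j})$. Since the claim is local, fix a point $y\in Y$ and a local model $q\colon\widetilde Y\to\mathcal{U}$ on a neighborhood $\mathcal{U}$ of $y$, where $\widetilde Y$ is smooth.

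Next, I would apply Lemma~\ref{radio} to get
\begin{equation*}
\mu_2\cdot\mu_1=\frac{1}{\deg q}\, q_*\bigl(M_{\kappa_2}^{q^*f_2}\w M_{\kappa_1}^{q^*f_1}\bigr).
\end{equation*}
Since $\widetilde Y$ is smooth, Example~\ref{prog2} identifies each $M_{\kappa_j}^{q^*f_j}$ with the Lelong current of the fundamental cycle $\mu_{q^*f_j}$ of the regular embedding $\{q^*f_j=0\}$, and Proposition~\ref{glas} says that the current wedge is the Lelong current of the classical proper intersection of these two cycles in $\widetilde Y$. Consequently
\begin{equation*}
\mu_2\cdot\mu_1=\frac{1}{\deg q}\, q_*\bigl(\mu_{q^*f_2}\cdot_{\widetilde Y}\mu_{q^*f_1}\bigr).
\end{equation*}

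The last step is to show that the Barlet--Magn\'usson product admits exactly the same representation. To this end I would invoke the compatibility from \cite{BMmathZ} of $\cap_Y$ with local models: if $q\colon\widetilde Y\to\mathcal{U}$ is a local model and $\nu_1,\nu_2$ are properly intersecting cycles in $\mathcal{U}$ whose cycle-theoretic pullbacks $q^*\nu_j$ are well-defined and intersect properly in $\widetilde Y$, then
\begin{equation*}
\nu_2\cap_Y\nu_1=\frac{1}{\deg q}\, q_*\bigl(q^*\nu_2\cdot_{\widetilde Y}q^*\nu_1\bigr).
\end{equation*}
Applied to $\nu_j=\mu_{\J_j}$ this gives the same expression as above, because the natural pullback of the fundamental cycle $\mu_{\J_j}$ by $q$ is precisely $\mu_{q^*f_j}$ (again by Example~\ref{prog2} applied on the smooth space $\widetilde Y$).

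The main obstacle is this last compatibility: extracting from \cite{BMmathZ} that $\cap_Y$ on an RE-cycle coincides with the local-model pullback/pushforward prescription, and that for a regular embedding defined by $f_j$ on a normal space the pullback cycle under a local model equals $\mu_{q^*f_j}$. Once this is in hand, the proposition follows by combining Lemma~\ref{radio} with the smooth case treated in Section~\ref{glatt}.
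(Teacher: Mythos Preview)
Your approach is exactly the paper's: reduce by $\mathbb{Q}_+$-linearity to fundamental cycles of complete intersection ideals, apply Lemma~\ref{radio} on one side, invoke the formula $\mu_2\cap_Y\mu_1=\frac{1}{\deg q}q_*(q^*\mu_2\cdot_{\widetilde Y}q^*\mu_1)$ from \cite{BMmathZ} on the other, and then match the two by showing $q^*\mu_{\J_j}=\mu_{q^*f_j}$.

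The gap is precisely the last identification, which you flag as ``the main obstacle'' but then justify only by citing Example~\ref{prog2}. That example tells you that on the smooth space $\widetilde Y$ the current $M_{\kappa_j}^{q^*f_j}$ equals the fundamental cycle of the ideal $\langle q^*f_j\rangle$; it says nothing about why this coincides with the Barlet--Magn\'usson cycle-theoretic pullback $q^*\mu_{\J_j}$. The BM pullback is defined purely in terms of cycles, not defining equations, and the agreement is not automatic when $\mu_{\J_j}$ carries multiplicities or meets $Y_{sing}$ and the ramification locus of $q$.

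The paper fills this gap with a deformation argument (Lemma~\ref{analytlemma} and Corollary~\ref{pullbackcor}): one shows that $(M_{\kappa_j}^{f_j-s})_{s\in S}$ and $(M_{\kappa_j}^{q^*f_j-s})_{s\in S}$ are analytic families of cycles; for generic $s$ the cycle $M_{\kappa_j}^{f_j-s}$ is reduced and avoids $Y_{sing}$ and the branch locus, so the BM pullback is just the set-theoretic preimage and the desired equality holds; since $q^*$ preserves analytic families, continuity in $s$ gives the equality at $s=0$. Without this step (or some equivalent), the comparison of the two products is not complete.
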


\begin{lma}\label{analytlemma}
Let $f=(f_1,\ldots,f_\kappa)$ be a holomorphic tuple 
in $Y$ such that $\text{codim}\, f^{-1}(0)=\kappa$.
Then there is a neighborhood $S$ of $0\in\C^\kappa$ such that $\big(M_\kappa^{f-s}\big)_{s\in S}$ is an analytic family
of cycles in $Y$ parametrized by $S$. In particular, $\lim_{s\to 0} M_\kappa^{f-s} = M_\kappa^{f}$ as currents.
\end{lma}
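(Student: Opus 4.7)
The plan is to use upper semicontinuity of fiber dimension to shrink $Y$ and $S$ so that $M_\kappa^{f-s}$ is an effective integer cycle for every $s \in S$, and then to realize the whole family as slices of a single cycle $\M$ on $Y\times S$. Analyticity will follow from the construction of $\M$, and continuity at $s=0$ will follow from the slicing description.

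First, by upper semicontinuity of the fiber dimension of $f\colon Y\to \C^\kappa$, the set $A:=\{y\in Y:\dim_y f^{-1}(f(y))\ge n-\kappa+1\}$ is an analytic subset of $Y$. Since $f^{-1}(0)$ has pure dimension $n-\kappa$, every $y\in f^{-1}(0)$ satisfies $\dim_y f^{-1}(0)=n-\kappa<n-\kappa+1$, so $A\cap f^{-1}(0)=\emptyset$. I shrink $Y$ to a neighborhood of $f^{-1}(0)$ contained in $Y\setminus A$ and take a polydisc $S\subset \C^\kappa$ around $0$ small enough that $(f-s)^{-1}(0)=f^{-1}(s)$ is, for every $s\in S$, either empty or of pure codimension $\kappa$ in $Y$. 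By Example~\ref{prog2}, $M_\kappa^{f-s}$ is then the effective integer fundamental cycle of the ideal $\langle f-s\rangle$.

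Next, I view $\widetilde f(y,s):=f(y)-s$ as a holomorphic section of the trivial rank-$\kappa$ bundle over $Y\times S$. Its zero set is the graph $\Gamma_f$ of $f|_{f^{-1}(S)}$, which has pure codimension $\kappa$ in $Y\times S$; after further shrinking, the projection $\pi\colon \Gamma_f\to S$ is proper onto its image. By Example~\ref{fredag}, $\M:=M_\kappa^{\widetilde f}$ is an effective integer cycle in $Y\times S$ supported on $\Gamma_f$. For every $s_0\in S$, the embedding $\iota_{s_0}\colon Y\hookrightarrow Y\times\{s_0\}\subset Y\times S$ pulls $\widetilde f$ back to $f-s_0$; applying \eqref{morr} with $g=\iota_{s_0}$, $\sigma=\widetilde f$, and $\nu=1$ yields
\[
\M\wedge (\iota_{s_0})_*1 \;=\; (\iota_{s_0})_* M_\kappa^{f-s_0}.
\]
Since $(\iota_{s_0})_*1=[Y\times\{s_0\}]=M_\kappa^{s-s_0}$ by Example~\ref{fredag}, this identifies $(M_\kappa^{f-s})_{s\in S}$ with the family of slices of the fixed cycle $\M$ by the moving divisors $\{s=s_0\}$, exhibiting it as an analytic family of cycles over $S$.

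Finally, continuity at $s=0$ reduces to showing that for any compactly supported smooth test form $\varphi$ on $Y$ of the appropriate bidegree, the function $s\mapsto\langle M_\kappa^{f-s},\varphi\rangle$ is continuous on $S$. Using the slicing formula above together with properness of $\pi|_{\Gamma_f\cap(\supp\varphi\times S)}$, this pairing is the fiber-integral of $\M$ over $s$ and hence varies continuously in $s$; alternatively, one may combine the pseudomeromorphic approximation \eqref{Mdef} applied to $\widetilde f$ on $Y\times S$ with the dimension principle to pass to the limit directly. The main obstacle is justifying continuity of the slicing, for which the preparatory shrinking in the first step is crucial: it ensures that $\pi|_{\Gamma_f}$ is proper over $S$ and that the fibers remain of the expected codimension.
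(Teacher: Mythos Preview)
Your overall strategy—constructing the graph cycle $\M=M_\kappa^{\widetilde f}$ on $Y\times S$ and identifying $M_\kappa^{f-s_0}$ with the slice of $\M$ by $Y\times\{s_0\}$ via \eqref{morr}—is exactly the idea the paper uses. The computation $\M\wedge(\iota_{s_0})_*1=(\iota_{s_0})_*M_\kappa^{f-s_0}$ is the same as the paper's, and your use of Example~\ref{fredag} to see that $\M$ is a cycle is fine.

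The gap is the sentence ``exhibiting it as an analytic family of cycles over $S$.'' Being realized as slices of a fixed cycle is not, by itself, a proof that the family is analytic in the precise sense of \cite[Section~4.3.1]{BMbook}; one needs either to verify the local-scale conditions or to invoke a theorem. You never use that $Y$ is nearly smooth, and indeed your argument, if it worked as written, would prove the lemma for an arbitrary reduced $Y$. The paper closes this gap in two steps: first it uses a local model $q\colon\widetilde Y\to Y$ (available because $Y$ is nearly smooth) together with $q_*M_\kappa^{q^*f-s}=\deg q\cdot M_\kappa^{f-s}$ and the fact that pushforwards of analytic families are analytic \cite[Theorem~4.3.22]{BMbook} to reduce to the case where $Y$ is smooth; second, on a smooth $Y$, it cites \cite[Ch.~VII, Proposition~1.5.1]{BMbookII} to conclude that the family $(G\cdot H_s)_{s\in S}$ of proper intersections of the graph with the horizontal slices is analytic. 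Your argument supplies neither the reduction nor a substitute for the citation. The continuity argument at the end is also not self-contained: the paper simply cites \cite[Proposition~4.2.17]{BMbook}, which says that analytic families are continuous, whereas your fiber-integral sketch would again need the properness and the analyticity already in hand.

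A minor further point: shrinking $Y$ to a neighborhood of $f^{-1}(0)$ contained in $Y\setminus A$ can remove points of $f^{-1}(s)$ for $s\neq 0$, so this does not directly yield $\codim f^{-1}(s)=\kappa$ on the original $Y$; the paper gets this from \cite[Proposition~2.4.60]{BMbook} without shrinking $Y$, and only later replaces $Y$ by $f^{-1}(S)$, which does not affect the cycles $M_\kappa^{f-s}$ for $s\in S$.
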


The definition of an analytic family of cycles can be found, e.g., in \cite[Section~4.3.1]{BMbook}.
The precise definition is not needed in this paper; instead
we will recall and use various natural properties of such families when needed.

\begin{proof}
Notice first that there is a neighborhood
$S\subset \C^\kappa$ of $0$ such that $\text{codim}\, f^{-1}(s)=\kappa$ for all $s\in S$;
see, e.g., \cite[Proposition~2.4.60]{BMbook}. 

It is a local problem in $Y$ to show that $\big(M_\kappa^{f-s}\big)_{s\in S}$ is an analytic family so we can assume that we 
have a local model $q\colon\widetilde Y\to Y$. 
As in the proof of Lemma~\ref{radio}  we have
$q_* M_\kappa^{q^*f-s} = \text{deg}\, q\cdot  M_\kappa^{f-s}$ and
$\text{codim}\, (q^*f)^{-1}(s)=\kappa$, $s\in S$.
If $\big(M_\kappa^{q^*f-s}\big)_{s\in S}$ is an analytic family in $\widetilde Y$, then $q_* M_\kappa^{q^*f-s}$ is an analytic family in $Y$ by
\cite[Theorem~4.3.22]{BMbook}. Hence, to show the lemma it suffices to show that $\big(M_\kappa^{q^*f-s}\big)_{s\in S}$ is analytic. 
We may thus assume that
$Y$ is smooth. Possibly replacing $Y$ by $f^{-1}(S)$ we may also assume that $f\colon Y\to S$ has fibers of constant codimension $\kappa$.

Let $G\subset Y\times S$ be the graph of $f$ and let $H_s:=Y\times\{s\}\subset Y\times S$. Then $G$ and $H_s$ intersect properly 
and $X_s:=G\cdot H_s$ is after the identification $H_s\simeq Y$ a cycle in $Y$. In view of \cite[Ch.\ VII, Proposition~1.5.1]{BMbookII},
$(X_s)_{s\in S}$ is an analytic family of cycles in $Y$.

Let $F(x,s)=f(x)-s$. By Example~\ref{fredag} we have $M_\kappa^F=[G]$.  Fix an arbitrary $s_0\in S$ and let 
$i\colon Y\to Y\times S$ be the embedding $x\mapsto (x,s_0)$ so that $i_*1=H_{s_0}$. 
In view of Section~\ref{Chirkaprod} or Section~\ref{Fultprod}  and \eqref{morr},
\begin{equation*}
i_* X_{s_0} = G\cdot H_{s_0} = M_\kappa^F\wedge i_*1 = i_* M_\kappa^{i^*F}=i_*M_{\kappa}^{f-s_0}.
\end{equation*}
Hence, $X_{s_0}=M_\kappa^{f-s_0}$ and it follows that $(M_\kappa^{f-s})_{s\in S}$ is an analytic family.
The last statement of the lemma follows from \cite[Proposition~4.2.17]{BMbook}.
\end{proof}

If $q\colon \widetilde Y\to Y$ is a local model and $\mu$ is a cycle in $Y$, then there is a natural pullback cycle
$q^*\mu$ in $\widetilde Y$, see \cite[Section~2.1]{BMmathZ}. We have the following corollary of Lemma~\ref{analytlemma}.

\begin{cor}\label{pullbackcor}
Let $f=(f_1,\ldots,f_\kappa)$ be as in Lemma~\ref{analytlemma},
let $q\colon \widetilde Y\to Y$ be a local model, and let $\mu=M_\kappa^f$. Then $q^*\mu=M_\kappa^{q^*f}$.
\end{cor}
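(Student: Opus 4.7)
The plan is to use Lemma~\ref{analytlemma} to realize both $\mu=M_\kappa^f$ and $M_\kappa^{q^*f}$ as $s\to 0$ limits of analytic families of cycles, and to deduce the desired equality $q^*\mu=M_\kappa^{q^*f}$ from the identification $q^*M_\kappa^{f-s}=M_\kappa^{q^*f-s}$ for $s$ ranging over a dense subset of the parameter space.

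First, applying Lemma~\ref{analytlemma} separately to $f$ in $Y$ and to $q^*f$ in the smooth manifold $\widetilde Y$---noting that $(q^*f)^{-1}(0)=q^{-1}(f^{-1}(0))$ has codimension $\kappa$ since $q$ is finite---one obtains, after shrinking the parameter neighborhood $S\subset\C^\kappa$ if necessary, two analytic families of cycles $X_s:=M_\kappa^{f-s}$ in $Y$ and $\widetilde X_s:=M_\kappa^{q^*f-s}$ in $\widetilde Y$, with $X_s\to \mu$ and $\widetilde X_s\to M_\kappa^{q^*f}$ as $s\to 0$. Since $\mathrm{id}_S\times q\colon S\times\widetilde Y\to S\times Y$ is again a local model, and pullback by a local model sends analytic families of cycles to analytic families (see \cite{BMbook}), the family $(q^*X_s)_{s\in S}$ is also analytic in $\widetilde Y$, whence by continuity of analytic families in the parameter, $q^*X_s\to q^*\mu$ as $s\to 0$.

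Next, I would identify $q^*X_s=\widetilde X_s$ on a dense subset $S^*\subset S$. Take $S^*$ to be the complement of the critical values of $f|_{Y_{reg}}$, of the $f$-image of the branch locus of $q$ over $Y_{reg}$, and, in the case $\dim f(Y_{sing})<\kappa$, of the set $f(Y_{sing})$ itself. Because $Y$ is normal, $\dim Y_{sing}\le n-2$, so a short dimension count shows that for $s\in S^*$ the set $f^{-1}(s)\cap Y_{sing}$ has codimension strictly greater than $\kappa$ in $Y$; the dimension principle then forces $M_\kappa^{f-s}$ to have no component contained in $Y_{sing}$. Hence for $s\in S^*$ the cycle $M_\kappa^{f-s}$ equals the reduced cycle $[f^{-1}(s)]$ and is supported in $Y_{reg}$, and on the smooth manifold $\widetilde Y$ we likewise have $M_\kappa^{q^*f-s}=[q^{-1}(f^{-1}(s))]$ as reduced cycles. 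Since $q$ is \'etale over $f^{-1}(s)$ for such $s$, the intrinsic pullback $q^*[f^{-1}(s)]$ coincides with $[q^{-1}(f^{-1}(s))]$, and so $q^*X_s=\widetilde X_s$ on all of $S^*$.

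Taking $s\to 0$ along any sequence in $S^*$ and combining this with the limit statements established in the first step yields $q^*\mu=M_\kappa^{q^*f}$, as required. The main technical burden lies in the generic comparison of the second step: one must check carefully that normality of $Y$, Sard's theorem, and the dimension principle combine so as to reduce both $M_\kappa^{f-s}$ and $M_\kappa^{q^*f-s}$ to reduced preimage cycles supported in the \'etale, smooth part of $q$, where their coincidence is then immediate.
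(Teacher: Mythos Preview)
Your overall strategy is the same as the paper's: realize both sides as values at $s=0$ of the analytic families $X_s=M_\kappa^{f-s}$ and $\widetilde X_s=M_\kappa^{q^*f-s}$ furnished by Lemma~\ref{analytlemma}, verify $q^*X_s=\widetilde X_s$ for generic $s$ by reducing both sides to reduced cycles, and conclude by continuity of analytic families. The problem is in your construction of the generic set $S^*$.

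You remove from $S$ the entire $f$-image of the branch locus $B\subset Y_{reg}$ of $q$, so that $q$ becomes literally \'etale over $f^{-1}(s)$. But $B$ has dimension $n-1$, and $f(B)$ can have dimension $\kappa$ whenever $\kappa\le n-1$; then $S^*$ need not be dense and may be empty. For a concrete instance take $Y=\widetilde Y=\C^2$, $q(u,v)=(u,v^2)$, $f(x,y)=x$: the branch locus is $\{y=0\}$ and $f(\{y=0\})=\C=S$, so your $S^*$ is empty. A similar issue lurks in your treatment of $Y_{sing}$ in the case $\dim f(Y_{sing})=\kappa$, where you appeal to a ``short dimension count'' without saying which further nowhere-dense set of parameters you are excluding.

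The fix, which is what the paper does, is to impose the weaker condition that no \emph{irreducible component} of $\{f=s\}$ lie in the branch locus $V$, in $Y_{sing}$, or in the locus where $f\circ q$ drops rank; this holds for $s$ off a genuinely nowhere-dense subset of $S$ (the paper cites \cite[Corollary~2.4.61]{BMbook}). Under that hypothesis one still gets $\mu_s=|\mu_s|$ via Example~\ref{fredag}, since $f-s$ generates a radical ideal generically on each component, and likewise $\widetilde\mu_s=|\widetilde\mu_s|$. The Barlet--Magn\'usson pullback satisfies $q^*|\mu_s|=q^{-1}|\mu_s|$ precisely under ``no component in $V\cup Y_{sing}$'' (see \eqref{mustiff}), so \'etaleness over all of $f^{-1}(s)$ is not needed. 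With this adjustment your argument goes through and coincides with the paper's.
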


\begin{proof}
Let $\widetilde V=\{x\in\widetilde Y;\, \text{rank}_x\, q < \text{dim}\, Y\}$ and $V=q(\widetilde V)$. Then,
since $q$ is proper and surjective, $\widetilde V$ and $V$ are
nowhere dense
analytic subsets of $\widetilde Y$ and $Y$, respectively.

By \cite[Section~2.1]{BMmathZ}, $q^*$ has the following two properties. First, if $Z$ is a cycle with no component contained in 
$V\cup Y_{sing}$ and $Z=|Z|$, then 
\begin{equation}\label{mustiff}
q^*Z=q^{-1} |Z|.
\end{equation} 
Second, if $(\mu_s)_{s\in S}$ is an analytic family of cycles in $Y$, then 
$(q^*\mu_s)_{s\in S}$ is an analytic family of cycles in $\widetilde Y$.

Let now $\mu_s:=M_\kappa^{f-s}$ and $\widetilde\mu_s:=(M_\kappa^{q^*f-s})_{s\in S}$, where $S$ is as in Lemma~\ref{analytlemma}. 
By that lemma, $(\mu_s)_{s\in S}$
and $(\widetilde\mu_s)_{s\in S}$ are analytic families of cycles in $Y$ and $\widetilde Y$, respectively.
We claim that 
\begin{equation}\label{klejm}
q^*\mu_s=\widetilde\mu_s
\end{equation}
for all $s\in S$, from which the corollary follows. 
To show the claim it suffices to check that \eqref{klejm} holds for generic $s$ in $S$
since both $q^*\mu_s$ and $\widetilde\mu_s$ are analytic, in particular continuous, in $s$.
Let $\widetilde A=\{x\in \widetilde Y;\, \text{rank}_x\, f\circ q <\kappa\}$ and $A=q(\widetilde A)$.
Then $\widetilde A$ and $A$ are nowhere dense analytic subsets of $\widetilde Y$ and $Y$, respectively.

For generic $s$, by, e.g., \cite[Corollary~2.4.61]{BMbook}, $\mu_s$ has no component contained in $A\cup V\cup Y_{sing}$ and $\widetilde\mu_s$
has no component contained in $\widetilde A$.
Fix such an $s$. Outside $A\cup V\cup Y_{sing}$, $f$ has constant rank $\kappa$ and so $\{f=s\}$ is a submanifold and 
$f-s$ generates its radical ideal there.  By Example~\ref{fredag} thus 
\begin{equation}\label{mastiff}
\mu_s=|\mu_s|
\end{equation}
outside $A\cup V\cup Y_{sing}$. Since $\mu_s$ has no component contained in $A\cup V\cup Y_{sing}$ it follows that \eqref{mastiff}
holds in $Y$. In the same way it follows that $\widetilde\mu_s=|\widetilde\mu_s|$. Since $\mu_s$ in particular has no component contained in 
$V\cup Y_{sing}$ it now follows from \eqref{mustiff} that
$$
q^*\mu_s=q^{-1} |\mu_s|=q^{-1}\{f=s\}=\{f\circ q=s\}=|\widetilde \mu_s|=\widetilde\mu_s
$$
and the claim and the corollary are proved.
\end{proof}

\begin{proof}[Proof of Proposition~\ref{nearlyprop}]
This is a local statement 
so after shrinking $Y$ we may assume that there is a local model $q\colon \widetilde Y\to Y$.
Moreover, by linearity we can assume that 
$\mu_j=M_{\kappa_j}^{f_j}$ for holomorphic tuples $f_j=(f_{j1},\ldots,f_{j\kappa_j})$  such that
$\text{codim}\, f_j^{-1}(0)=\kappa_j$.

The pullback cycles $q^*\mu_j$ intersect properly in $\widetilde Y$
and, by \cite[Theorem~3.1.5]{BMmathZ}, 
\begin{equation}\label{hamta0}
\mu_2\cap_Y\mu_1 = \frac{1}{\text{deg}\, q} q_*(q^*\mu_2 \cdot q^*\mu_1),
\end{equation}
where $q^*\mu_2\cdot q^*\mu_1$ is the proper intersection product in $\widetilde Y$.
By Lemma~\ref{radio} thus the proposition follows from Corollary~\ref{pullbackcor}.
\end{proof}

\begin{remark}
By \cite[Proposition~1.1.5]{BMmathZ}, any Weil divisor in a nearly smooth $Y$ is a $\mathbb{Q}$-Cartier divisor.
Effective Weil divisors in $Y$ thus are RE-cycles, see Example~\ref{ruta}.
In view of Example~\ref{falkland} and since any $0$-dimensional cycle is RE by Example~\ref{punkt2}
we thus see that for $2$-dimensional nearly smooth spaces the proper intersection product in \cite{BMmathZ} 
coincides with our product for effective cycles.
\end{remark}

\end{document}